\documentclass[11pt]{elsarticle}
\usepackage{amssymb}
\usepackage{xcolor}
\usepackage{mathrsfs}
\usepackage{amscd}
\usepackage{appendix}
\usepackage{geometry}
\usepackage{setspace}
\usepackage{amsfonts}
\usepackage{amsmath}
\usepackage{lineno}
\usepackage{hyperref}
\usepackage{tikz}

\usetikzlibrary{matrix,arrows}
\newtheorem{definition}{Definition}[section]
\newtheorem{theorem}[definition]{Theorem} 
\newtheorem{lemma}[definition]{Lemma}     
\newtheorem{coro}[definition]{Corollary}

\newtheorem{prop}[definition]{Proposition}
\newtheorem{remark}[definition]{Remark}
\newtheorem{Problem}[definition]{Problem}

\numberwithin{equation}{section}
\def\Proof{\\\noindent \it Proof.\ \ \rm}
\def\qedbox{$\rlap{$\sqcap$}\sqcup$}
\biboptions{numbers,sort&compress} 
\def\Proof{
\noindent \it Proof.\ \ \rm}
\def\qedbox{$\rlap{$\sqcap$}\sqcup$}

\newcommand{\Z}{{\mathbb Z}}
\newcommand{\R}{{\mathbb R}}

\newcommand{\T}{{\mathbb T}}

\newcommand{\N}{{\mathbb N}}

\newcommand{\Leb}{{\mathrm{Leb}}}

\journal{arXiv}

\begin{document}

\begin{frontmatter}

\title{Anderson Localization for Schr\"{o}dinger Operators with\\ Monotone Potentials Generated by the Doubling Map}

\author[]{Yuanyuan Peng}
\ead{lunarpeng@foxmail.com }

\author[]{Chao Wang}
\ead{cwangmath@sohu.com }


\author[]{Daxiong Piao\corref{cor1}}
\ead{dxpiao@ouc.edu.cn}
\cortext[cor1]{Corresponding author}

\address{School of Mathematical Sciences,  Ocean University of China,
Qingdao 266100, P.R.China}

\begin{abstract}

In this paper, we consider the Schr\"{o}dinger  operators  on $ \ell^{2}(\N) $, defined for all $ x\in\mathbb{T}  $ by
\begin{equation}
(H(x)u)_n = u_{n+1} + u_{n-1}  + \lambda f(2^{n} x) u_n, \quad \text{for } n \geq 0,\notag
\end{equation}
with the Dirichlet boundary condition $ u_{-1}=0 $. Building on Zhang's recent breakthrough work [Comm.Math.Phys.405:231(2024)] that resolved Damanik's open problem [Proc.Sympos. Pure Math.76,Amer.Math.Soc.(2007)]  on the uniform positivity of the Lyapunov exponent,  for the potential $ f \in C^{1}(0,1)$  with $ \|f\|_{C^{1}(0,1)} < C $ and $ \inf_{x \in (0,1)} |f^{\prime}(x)| > c>0 $, we  obtain the large deviation estimate and prove that for a.e. $ x \in \mathbb{T} $ and sufficiently large $ \lambda > \lambda_{0} $, the operators $ H(x) $ display Anderson localization. Furthermore, if the potentials  also have zero mean, our analysis reveals that the doubling map models can exhibit localization behavior for both  small and large coupling constants $ \lambda $.

\end{abstract}
\begin{keyword} Doubling Map \sep Monotone potential \sep Lyapunov exponent \sep Schr\"{o}dinger cocycle \sep Anderson localization.\\


\MSC[2010]  37A30  \sep 70G60

\end{keyword}

\end{frontmatter}

\tableofcontents
\section{Introduction}

The spectral analysis of Schr\"{o}dinger operators with deterministic potentials generated by hyperbolic dynamical systems represents a fundamental topic at the intersection of dynamical systems and mathematical physics. The chaotic nature of such systems endows the potentials with pseudo-random characteristics, leading to expectations of positive Lyapunov exponents and Anderson localization—a phenomenon characterized by pure point spectrum and exponentially decaying eigenfunctions, which corresponds to suppressed quantum transport in physical systems.

The doubling map $T(x) = 2x \mod 1$ on $\mathbb{T} = \mathbb{R}/\mathbb{Z}$, with its Bernoulli structure and strong mixing properties, serves as a paradigmatic model for studying this interplay between deterministic dynamics and localization. Fundamental work by Damanik and Killip \cite{D05} established that for any bounded, measurable, non-constant sampling function $f$, the Lyapunov exponent $L(E)$ is positive for almost every energy $E$, confirming the random-like character of this model. This prompted Damanik \cite{D07} to pose a deeper quantitative question:
\begin{Problem}
Find a class of functions $f\in L^\infty(\mathbb{T})$ such that for large coupling constants $\lambda > 0$, the Lyapunov exponent of the doubling map model satisfies $\inf_E L(E) > c \log \lambda$.
\end{Problem}

A breakthrough was recently achieved by Zhang \cite{Z24}, who answered this question affirmatively for monotone $C^1$ potentials with non-vanishing derivatives. His work, building on the framework of Young's hyperbolic cocycles \cite{Y} and the polar coordinate representation of Schr\"{o}dinger cocycles \cite{WZ,Z12}, effectively quantifies the competition between the hyperbolicity of the base dynamics and the cocycle. However, Zhang's class of potentials introduces a significant novelty: they may exhibit jump discontinuities at the periodic endpoints, requiring the completion $f(0) = \lim_{x\to 0^+} f(x)$, which represents a departure from the globally $C^1$ potentials typically considered in earlier work.

While the uniform positivity of the Lyapunov exponent established by Zhang is crucial, it alone cannot guarantee Anderson localization due to the potential occurrence of ``double resonances" that may prevent exponential decay of eigenfunctions and lead to singular continuous spectrum. The pioneering work of Bourgain and Schlag \cite{BoS} provides a comprehensive framework for establishing localization through three key components: large deviation estimates, exponential bounds on Green's functions, and double resonance analysis. However, their approach is fundamentally perturbative, relying on small coupling constants $\lambda$ and globally $C^1$ potentials.

\textbf{Our Contributions.} This paper bridges the gap between Zhang's Lyapunov exponent bounds and complete Anderson localization proof for large coupling constants. We extend and adapt the Bourgain-Schlag framework to accommodate both large $\lambda$ regimes and Zhang's class of potentially discontinuous monotone potentials. Our main innovations include:

\begin{enumerate}[{(1)}]
\item \textbf{Handling Non-Smooth Potentials:} We develop techniques to manage potentials that may lack global $C^1$ smoothness, particularly those with endpoint discontinuities. By leveraging monotonicity and ergodic properties of the doubling map, we overcome challenges not addressed in previous frameworks.

\item \textbf{Non-Perturbative Large Deviation Estimates:} Under Zhang's monotonicity assumption and relaxed regularity conditions, we establish large deviation estimates in the large-$\lambda$ regime. Our approach, based on strong mixing properties and refined analysis of angle evolution, demonstrates robustness against potential singularities.

\item \textbf{Parameter-Sensitive Quantitative Control:} We carefully trace $\lambda$-dependencies throughout the estimation process, showing that key quantities remain controllable for large $\lambda$ when the Lyapunov exponent is sufficiently large.

\item \textbf{Dual-Parameter Scaling Strategy:} We introduce new scale selection methods to maintain near-independence properties when the Lyapunov exponent scales as $O(\log \lambda)$.

\end{enumerate}

Our work demonstrates that the Bourgain-Schlag framework, when combined with Zhang's Lyapunov exponent estimates and enhanced to handle singular potentials, can establish localization in previously inaccessible regimes. This significantly expands our understanding of how deterministic disorder leads to localization in quantum systems.

\emph{Organization of the paper}. Section 2 introduces the setting and states the main results. Section 3 presents the analysis of the Schr\"{o}dinger cocycle in polar coordinates. The key technical estimates are addressed in the subsequent sections:   the large deviation estimate is proven in Section 4, the H\"{o}lder continuity of the Lyapunov exponent is established in Section 5,  Section 6 establishes estimates for the truncated part of the Green's function, and the measure of the double resonances set is derived in Section 7. Finally, Section 8 completes the proof of Anderson localization  by eliminating the double resonance set.

\section{Setting and Main Results}

We begin by introducing the setting and  main results. Detailed contents can be found in  \cite{D05,D23,D,Z24}.

Consider the discrete Schr\"{o}dinger operator $H(x): \ell^2(\N) \to \ell^2(\N)$, defined for all $ x\in\mathbb{T}  $ by
\begin{equation}\label{2.1}
(H(x) u)_n= u_{n+1} + u_{n-1} + \lambda f(T^n x) u_n, \quad  \text{for}\,\, n\geq 0,
\end{equation}
with Dirichlet boundary condition $u_{-1}= 0$. Here, $\lambda \in \mathbb{R}$ denotes the coupling constant, and the potential  is generated by the doubling map $T: \mathbb{T} \to \mathbb{T}$, $x \mapsto 2x \mod 1$ with $f: \mathbb{T} \to \mathbb{R}$ assumed to be bounded, measurable and non-constant.  This defines an  ergodic family of operators $\{H(x)\}_{x \in \mathbb{T}}$, see \cite[Chapter 3]{D}.

For any energy $E\in\R$, the transfer matrix $A(x,E): \mathbb{T} \to \mathrm{SL}(2,\mathbb{R})$ is defined by
\begin{equation}\label{eq:trans-matrix}
A(x,E) = \begin{pmatrix}
E - \lambda f(x) & -1 \\
1 & 0
\end{pmatrix},
\end{equation}
and the Schr\"{o}dinger cocycle $(T, A(x,E)): \mathbb{T} \times \mathbb{R}^2 \to \mathbb{T} \times \mathbb{R}^2$ is given by
$(x, v) \mapsto (Tx, A(x,E)v)$.
We define the  cocycle  $(T, A(x,E))^n = (T^n, A_n(x,E))$ for $n\geq1$, where
\begin{equation}\label{a}
A_n(x,E) =
 A(T^{n}x,E)A(T^{n-1}x,E) \cdots A(Tx,E) .
\end{equation}

The eigenvalue equation $H(x)u = Eu$ is equivalent to
\begin{equation}
u_{n+1} + u_{n-1} + \lambda f(T^nx)u_n = Eu_n, \quad n \in \N.
\end{equation}
For any initial vector $\boldsymbol{u}_0=(u_0, u_1)^T \in \mathbb{R}^2$, the solution satisfies
\begin{equation}
\begin{pmatrix} u_{n+1} \\ u_n \end{pmatrix}= A_n(x,E)\begin{pmatrix} u_1 \\ u_0 \end{pmatrix}, \quad n \in \N.
\end{equation}
Thus, the transfer matrices $A_n(x,E)$ govern the evolution of solutions.

The Lyapunov exponent for the ergodic system is  given by
\begin{equation}\label{2.6}
L(E) = \lim_{n \to \infty} \frac{1}{n} \int_{\mathbb{T}} \log \|A_n(x, E)\|  dx \geq 0,
\end{equation}
where $\| \cdot \|$ denotes the operator norm. By Kingman's Subadditive Ergodic Theorem, it follows that
\begin{equation}
L(E) = \lim_{n \to \infty} \frac{1}{n}  \log \|A_n(x, E)\|
\end{equation}
for a.e. $x\in\T$.

Throughout this paper,  let $C$ and $c$  denote universal positive constants, with $C$ representing larger constants and  $c$ smaller ones.

Furthermore, we  introduce the notation: For any $a, b > 0$, $a \sim b$ means that $c a \leqslant b\leqslant C a $.

First, we introduce the results on generalized eigenvalues and generalized eigenfunctions, which are essential for our analysis of  Anderson localization.

\begin{lemma}\cite[Theorem 2.4.4]{D05}\label{th0}
A nontrivial sequence $u(x) = \{u_n(x)\}_{n\in\N}$ is a generalized eigenfunction of $H(x)$  if $u(x)$ solves the eigenvalue equation $H(x) u(x) = E u(x)$ for some $E$ and satisfies
\begin{equation} \label{eq:2.4.1}
|u_n(x)| \leq C(1 + |n|)^{\delta}
\end{equation}
for suitable finite constants $C, \delta > 0$, and every $n \in \N$.  Let $\mathcal{E}_{\delta} = \mathcal{E}_{\delta}(H)$ denote the set of generalized eigenvalues, which are the energies  $E$  satisfying  \eqref{eq:2.4.1}. Let $\mathcal{E} =\mathcal{E}(H) = \bigcup_{\delta > 0} \mathcal{E}_{\delta}$, then the spectrum satisfies $\sigma(H(x))=\bar{\mathcal{E}}$.
\end{lemma}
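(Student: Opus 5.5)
The plan is to prove the two inclusions $\bar{\mathcal{E}}\subseteq\sigma(H(x))$ and $\sigma(H(x))\subseteq\bar{\mathcal{E}}$ separately. Since $H(x)$ is a bounded self-adjoint Jacobi operator on $\ell^2(\N)$ with bounded potential $\lambda f(T^n x)$, the argument can be stated for a fixed $x$ and an arbitrary bounded real potential. No property of the doubling map is needed.

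\emph{Step 1: $\mathcal{E}\subseteq\sigma(H(x))$.} Let $E\in\mathcal{E}_\delta$ with a generalized eigenfunction $u$. By the recursion, $u_0\neq0$: otherwise the Dirichlet condition $u_{-1}=0$ would force $u\equiv0$. I will build a Weyl sequence by truncation. Let $\chi_N$ be a cutoff equal to $1$ on $[0,N]$, decaying linearly to $0$ on $[N,2N]$, and set $\phi_N=\chi_N u$. A direct computation shows that $(H-E)\phi_N$ involves only discrete derivatives of $\chi_N$, each of size $O(1/N)$, supported on $[N,2N+1]$. Hence
$$\|(H-E)\phi_N\|^2\lesssim N^{-2}\sum_{n\le 2N+1}|u_n|^2 .$$
The key point is to choose $N$ along a subsequence where the growth of $\sum_{n\le N}|u_n|^2$ is controlled relative to $\|\phi_N\|^2\ge \sum_{n\le N}|u_n|^2$. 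Polynomial boundedness of $u$ gives $S(N):=\sum_{n\le N}|u_n|^2\le CN^{2\delta+1}$. A polynomially bounded increasing sequence cannot satisfy $S(2N+1)\ge N^{3}S(N)$ for all large $N$ along the dyadic scales, since iterating would give superpolynomial growth. More carefully, $S(2N)/S(N)$ must be bounded along infinitely many dyadic $N$. Along those $N$ we get $\|(H-E)\phi_N\|/\|\phi_N\|\lesssim N^{-1}\to0$, so $E\in\sigma(H(x))$. Closedness of the spectrum then gives $\bar{\mathcal{E}}\subseteq\sigma$.

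\emph{Step 2: $\sigma(H(x))\subseteq\bar{\mathcal{E}}$.} This is the harder direction, and I expect it to be the main obstacle. The tool is Sch'nol's theorem in the form of spectral-measure support, with the argument in Berezanskii/Simon style. Take a maximal spectral measure, for instance $\mu=\mu_{\delta_0}+\mu_{\delta_1}$; since $\delta_0$ is cyclic, $\mu_{\delta_0}$ alone suffices. Expand the generalized eigenfunctions of $H$ in terms of the solution $u(E)$ with $u_{-1}=0$, $u_0=1$. Then $\langle\delta_n,g(H)\delta_0\rangle=\int g(E)\,u_n(E)\,d\mu_{\delta_0}(E)$, and the map $U\colon\ell^2\to L^2(\mu_{\delta_0})$ given by $\delta_n\mapsto u_n(\cdot)$ is unitary. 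In particular,
$$\int |u_n(E)|^2\,d\mu_{\delta_0}(E)=1 \quad\text{for every } n .$$
Summing with weights $(1+n)^{-2}$ and applying Fubini yields
$$\int\sum_n (1+n)^{-2}|u_n(E)|^2\,d\mu_{\delta_0}<\infty .$$
Hence for $\mu_{\delta_0}$-a.e.\ $E$ the sum is finite, which forces $|u_n(E)|\le C_E(1+n)$, so $E\in\mathcal{E}_1$. Therefore $\mu_{\delta_0}(\R\setminus\mathcal{E})=0$, so $\mathcal{E}$ has full spectral measure. Since the spectrum equals the topological support of $\mu_{\delta_0}$, every point of $\sigma$ is a limit of points of $\mathcal{E}$, giving $\sigma\subseteq\bar{\mathcal{E}}$.

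The delicate points are measurability of $E\mapsto u_n(E)$, which is clear since $u_n(E)$ is a polynomial in $E$, and the correct use of cyclicity of $\delta_0$ under the Dirichlet boundary condition. Cyclicity holds because $H^k\delta_0$ spans all $\delta_n$, by induction using the off-diagonal entries equal to $1$. With these in place, the two inclusions combine to give $\sigma(H(x))=\bar{\mathcal{E}}$.
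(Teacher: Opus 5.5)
Your proof is correct. The paper gives no proof of this lemma and simply imports it from the literature (Shnol's theorem together with the Berezanskii--Simon spectral-measure argument); your two steps are exactly the standard proof of that result: the cut-off Weyl sequence for a polynomially bounded solution, and the identity $\int|u_n(E)|^2\,d\mu_{\delta_0}(E)=1$ combined with Tonelli and $\sigma(H(x))=\operatorname{supp}\mu_{\delta_0}$.

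One cosmetic point concerns Step 1. There $(H-E)\phi_N$ involves only $u_m$ with $N\le m\le 2N$, so the bound $S(2N)<M\,S(N)$ along infinitely many dyadic $N$ (valid for any $M>2^{2\delta+1}$) is exactly what you need. The preliminary remark about $N^{3}$ adds nothing and can be dropped.
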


 According to Shnol's work \cite{Sh} and Lemma~\ref{th0},  we have following result for $H (x)$.

 \begin{lemma}\label{r}
 Suppose that for  any generalized eigenvalue $E$ of $H (x)$, the associated generalized eigenfunction decays exponentially:
 \begin{equation}
\left|u_{n}\right|  \leq e^{-cn}, \quad n \in \mathbb{\N}.
\end{equation}
 Then, $E$ is an eigenvalue of $H(x)$ and $u_n(x$) is the corresponding  eigenfunction. Consequently,  the operator $H_\lambda(x)$ has pure point spectrum  and  all eigenfunctions decay exponentially at infinity, i.e. the operator $H(x)$ exhibits Anderson localization.
\end{lemma}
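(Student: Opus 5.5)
The plan is to combine Shnol's theorem (in the form of Lemma~\ref{th0}) with the elementary observation that an exponentially decaying solution of the eigenvalue equation lies in $\ell^2(\N)$. The argument splits into two steps, in this order.

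\textbf{Step 1: each generalized eigenvalue is a genuine eigenvalue.} Let $E\in\mathcal{E}(H(x))$ with generalized eigenfunction $u$, and assume $|u_n|\le e^{-cn}$ (more realistically, $|u_n|\le C_u e^{-cn}$ for $n$ large, which suffices). Then
\begin{equation*}
\sum_{n\ge 0}|u_n|^2 \le \sum_{n\ge0} e^{-2cn}<\infty ,
\end{equation*}
so $u\in\ell^2(\N)$, and $u\neq 0$ because generalized eigenfunctions are nontrivial. The relation $H(x)u=Eu$ holds pointwise with the Dirichlet condition $u_{-1}=0$. Since $H(x)$ is bounded ($\|H(x)\|\le 2+|\lambda|\|f\|_\infty$), $u$ lies in its domain, and hence $E$ is an eigenvalue with eigenvector $u$. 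It follows that $\mathcal{E}\subset\sigma_{pp}(H(x))$.

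\textbf{Step 2: the spectral measures are pure point.} By Lemma~\ref{th0} (Shnol), $\sigma(H(x))=\overline{\mathcal{E}}$. Closedness of the spectrum alone is not enough to conclude pure point spectrum, so I will use the stronger form of Shnol's theorem behind \cite[Theorem 2.4.4]{D05}: spectrally almost every $E$ (with respect to a spectral measure, say $\mu$ of $\delta_0$, which is cyclic for the half-line operator) is a generalized eigenvalue, i.e.\ $\mu(\R\setminus\mathcal{E})=0$. Combined with Step 1, this gives $\mu(\R\setminus\sigma_{pp})=0$. Moreover, on the half-line with the boundary condition $u_{-1}=0$, every eigenvalue is simple, since the solution is determined by $u_0$. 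Hence $\sigma_{pp}$ is countable, and $\mu$ is supported on a countable set, so it is pure point. Cyclicity of $\delta_0$ then gives $\sigma_c(H(x))=\emptyset$. Finally, every eigenfunction is in particular a generalized eigenfunction, so by hypothesis it decays exponentially. This is Anderson localization.

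The main obstacle is Step 2, namely making sure that the version of Shnol's theorem used gives ``$\mu$-a.e.\ energy is a generalized eigenvalue'' rather than only $\sigma=\overline{\mathcal{E}}$. I would cite this stronger form from \cite{Sh,D}, with polynomial bound $\delta>1/2$. The hypothesis of Lemma~\ref{r} should be read as holding for every such generalized eigenvalue, with a constant allowed to depend on $u$.
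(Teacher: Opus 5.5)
Your proposal is correct and follows the same route as the paper, which offers no argument beyond invoking Shnol's theorem together with Lemma~\ref{th0}: an exponentially decaying generalized eigenfunction is an $\ell^2$ eigenvector, and Shnol's theorem shows that generalized eigenvalues carry the spectrum. You are also right that the form $\sigma(H(x))=\overline{\mathcal{E}}$ recorded in Lemma~\ref{th0} does not suffice on its own; what makes the citation work is the spectral-measure version you use, namely that the spectral measure of the cyclic vector $\delta_0$ gives full weight to $\mathcal{E}$, combined with the countability of the set of eigenvalues (read your $\sigma_{pp}$ as that set, not its closure).
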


Now we can state our main results as follows:

\begin{theorem}[H\"{o}lder continuity of $L(E)$]{\label{th01}}
For sufficiently large $\lambda$,  and for energies $E_{1},E_{2}\in\sigma(H(x))$, there exist suitable positive constants $C_\lambda$ and $c$, such that

\begin{equation}
|L(E_{1})-L(E_{2})|<C_\lambda|E_{1}-E_{2}|^{\frac{c}{{(\log\lambda)^3}} }.
\end{equation}
\end{theorem}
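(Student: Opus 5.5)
We follow the Goldstein--Schlag / Bourgain--Schlag scheme. The ingredients are a large deviation estimate (LDT) for $\frac1n\log\|A_n(x,E)\|$ at the scale of interest, the uniform lower bound $L(E)\ge c\log\lambda$ coming from Zhang's theorem, and an Avalanche Principle argument.

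\textbf{Step 1: the large deviation estimate.} The LDT is the content of Section 4 and is treated as an input here. We use it in the form
\[
\Leb\Big\{x:\ \Big|\tfrac1n\log\|A_n(x,E)\|-L_n(E)\Big|>\delta L(E)\Big\}<e^{-c\delta^2 n/(\log\lambda)^{a}}
\]
for $n\ge n_0(\lambda)$, uniformly in $E$ in a neighbourhood of $\sigma(H(x))$.

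Here $L_n(E)=\frac1n\int_{\T}\log\|A_n(x,E)\|\,dx$. The exponent $a$ on $\log\lambda$ is what eventually produces the power $(\log\lambda)^3$ in the H\"older exponent, so all $\lambda$-dependence must be tracked.

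\textbf{Step 2: the rate $0\le L_n-L\le C(\log\lambda)^b/n$.} Fix $n$ and a multiple $N=\ell n$. Outside a set of $x$ of exponentially small measure, the LDT gives:
\begin{itemize}
\item each block $A_n(T^{jn}x,E)$ has norm about $e^{nL_n}$;
\item consecutive products satisfy $\|A_{2n}\|\ge e^{2nL_{2n}-\text{small}}$.
\end{itemize}
Since $L_n\ge L\ge c\log\lambda$, the Avalanche Principle hypotheses hold with $\mu=e^{nL/2}$. This yields
\[
\log\|A_N\|+\sum_{j}\log\|A_n(T^{jn}x)\|-\sum_j\log\|A_{2n}(T^{jn}x)\|=O(\ell e^{-nL/2}).
\]
Integrating in $x$ and controlling the exceptional set by the trivial bound $|\log\|A_n\||\le Cn\log\lambda$ gives $|L_N+L_n-2L_{2n}|\le C(\log\lambda)^b/n$. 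Iterating in $N$ then gives $|L+L_n-2L_{2n}|\le C(\log\lambda)^b/n$, and telescoping over dyadic scales gives $0\le L_n-L\le C(\log\lambda)^b/n$.

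\textbf{Step 3: H\"older continuity of $L$.} For each fixed $n$, $E\mapsto L_n(E)$ is Lipschitz. Indeed, $\partial_E A_n$ is a sum of $n$ products of transfer matrices, and $\|A_n^{-1}\|=\|A_n\|$ in $\mathrm{SL}(2,\R)$, so
\[
|L_n(E_1)-L_n(E_2)|\le e^{Cn\log\lambda}|E_1-E_2|.
\]
The constant $C\log\lambda$ comes from $\|A\|\le C\lambda$ for $E$ near the spectrum; this is where $E_i\in\sigma(H(x))\subset[-2-C\lambda,2+C\lambda]$ is used. Combining with Step 2,
\[
|L(E_1)-L(E_2)|\le 2C(\log\lambda)^b/n+e^{Cn\log\lambda}|E_1-E_2|.
\]
Choosing $n\approx c\log(1/|E_1-E_2|)/\log\lambda$ gives only logarithmic continuity, so the argument has to be upgraded. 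Following Goldstein--Schlag, we replace $1/n$ by an exponentially small error: apply the Avalanche Principle at scale $N=e^{cn/(\log\lambda)^{a}}$, with $N$ as large as the LDT measure bound allows. This gives
\[
|L_N+L_n-2L_{2n}|\le Ce^{-cn/(\log\lambda)^{a}},
\]
and the same estimate holds with $N$ replaced by $L$. The differences at scales $n$ and $2n$ are then Lipschitz with constant $e^{Cn\log\lambda}$. Therefore
\[
|L(E_1)-L(E_2)|\le Ce^{-cn/(\log\lambda)^{a}}+e^{Cn\log\lambda}|E_1-E_2|.
\]
Optimizing $n\sim\log(1/|E_1-E_2|)/\log\lambda$ gives the exponent $c/(\log\lambda)^{1+a}$. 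We expect the LDT to have $a=2$, which yields the stated exponent $c/(\log\lambda)^3$, with $C_\lambda$ absorbing $n_0(\lambda)$.

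\textbf{Main obstacle.} The hardest step is the first one: obtaining the LDT with explicit polynomial dependence on $\log\lambda$ for a discontinuous monotone $f$ and the doubling map. Subharmonicity is not available, so the LDT has to come from the mixing and near-independence of $\log\|A_n\|$ along blocks, via Zhang's polar-coordinate angle analysis. The remaining steps are bookkeeping of $\lambda$-constants in the Avalanche Principle.
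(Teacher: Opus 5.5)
Your proposal follows essentially the same route as the paper. The LDT feeds the Avalanche Principle with blocks of length $K$, about $e^{cK/\log\lambda}$ of them, to give $|L(E)+L_K(E)-2L_{2K}(E)|<Ce^{-cK/\log\lambda}$ (Lemma~\ref{le52}). This is then combined with the Lipschitz bound $(4\lambda)^{2K+2}|E_1-E_2|$ for $L_K, L_{2K}$ and the choice $K\sim\log(1/|E_1-E_2|)/\log\lambda$, while the trivial bound handles $|E_1-E_2|\ge\lambda^{-(\log\lambda)^2}$ and is absorbed into $C_\lambda$.

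The only discrepancy is bookkeeping. Corollary~\ref{co01}(a) gives the LDT as a fixed deviation $A_f$ from $\log\lambda$ with rate $e^{-cn/\log\lambda}$, i.e.\ your $a=1$. This suffices for the Avalanche Principle because $L_n$ and $L$ lie within $O(1)$ of $\log\lambda$. The argument therefore actually yields the exponent $c/(\log\lambda)^2$, and the paper's $(\log\lambda)^3$ is merely a weakening, not something that needs $a=2$.
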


\begin{theorem}[Localization for large coupling] \label{th1}
Consider a monotonic function $f: \T\to \mathbb{R}$ which is $C^1$ on $(0,1)$. Assume that:

\begin{itemize}
    \item On the torus $\mathbb{T}$, $x = 0$ is a jump discontinuity point of $f$.
    \item $f$ is right-continuous at $0$, that is $f(0)=\lim_{x\to 0^+}f(x)$.
    \item The right derivative $f_{+}^{\prime}(0)$ exists and is finite.
    \item The derivative satisfies $\inf_{x\in(0,1)}|f^{\prime}(x)|\geq c>0$.
    \item The $C^1$ norm on $(0,1)$, $\|f\|_{C^{1}(0,1)}<C$.
\end{itemize}
 Here, $c$ and $C$ are positive constants depending on $f$.
Then there exists a constant $\lambda_0=\lambda_0(f)>0$ such that for all $\lambda>\lambda_0$ and a.e. $x\in\T$, the  operator \eqref{2.1}  exhibits
 Anderson localization.
\end{theorem}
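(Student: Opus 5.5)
The plan is to follow the Bourgain--Schlag scheme for the doubling map, made non-perturbative with Zhang's bound $\inf_E L(E)\ge c\log\lambda$ from \cite{Z24}, and to finish with Lemma~\ref{r}. It suffices to show that for a.e.\ $x$ every generalized eigenfunction $u$ of $H(x)$ satisfies $|u_n|\le e^{-cn}$ for large $n$. The argument has four steps.

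\textbf{Step 1: large deviation estimate.} Write the cocycle in polar coordinates, $A(x,E)=\Lambda(x)R_{\theta(x)}$ with $\|A\|\sim\lambda$. Using the monotonicity of $f$ and $|f'|\ge c$, control the angle evolution as in Zhang. Because $T$ is expanding and Bernoulli, the increments $\log\|A(T^jx)v_j\|$ along the iterated angle are nearly independent at scale $N$, up to exponentially small errors from mixing. A martingale/Bernstein-type argument should then give
\begin{equation}
\Leb\Bigl\{x:\Bigl|\tfrac1N\log\|A_N(x,E)\|-L_N(E)\Bigr|>\kappa L(E)\Bigr\}\le e^{-cN^{\sigma}},
\end{equation}
with every $\lambda$-dependence tracked. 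The discontinuity of $f$ at $0$ only affects the set of $x$ whose orbit comes within $e^{-cN}$ of $0$ in the first $N$ steps. That set has small measure and is absorbed into the exceptional set.

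\textbf{Step 2: Green's functions and H\"older continuity.} Combine the large deviation estimate with the avalanche principle to get Theorem~\ref{th01}, and to show that $L_N\to L$ at the rate $|L_N-L|\lesssim 1/N$. Via Cramer's rule, the estimate also yields off-diagonal decay of the Green's function on an interval $[a,a+N]$:
\begin{equation}
|G_{[a,a+N]}(E;m,n)|\le e^{-(L(E)-\epsilon)|m-n|+o(N)},
\end{equation}
valid for $x$ outside a set $\mathcal B_N(E)$ of measure at most $e^{-cN^\sigma}$. The condition is that $\det(H_{[a,a+N]}-E)$ is not too small, which the large deviation estimate guarantees.

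\textbf{Step 3: eliminating double resonances.} This is the step I expect to be the main obstacle. Since $\sigma(H(x))\subset[-2-C\lambda,2+C\lambda]$, we may take $E$ bounded by $C\lambda$ in what follows. Let $\mathcal D_N$ be the set of $x$ for which some $E$ is simultaneously resonant at a window near $0$ of size $N$ and at a distant window $[n,n+N]$ with $N^{C}\le n\le e^{N^{\sigma'}}$. To bound $\Leb(\mathcal D_N)$, first replace $E$ by the finitely many eigenvalues $E_j(x)$ of the local box, of which there are at most $N+1$. Then use that $T^n$ is $2^n$-to-one and measure-preserving, so that the event at the distant window, which depends on $T^nx$, is nearly independent of the local event, which depends on the first $N$ binary digits of $x$. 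Two dependencies must be balanced here. First, the mixing error $2^{-(n-N)}$ must beat $e^{-cN^\sigma}$. Second, the Green's function decay rate $L\sim\log\lambda$ must dominate the number of bad scales. This is where I would use a two-parameter choice of scales, with $N$ taken large relative to $(\log\lambda)^{C}$, so that the union over $n$ and $j$ still gives $\sum_N\Leb(\mathcal D_N)<\infty$. The H\"older continuity from Step 2 is used to pass from discrete energies to all $E$ in a neighborhood.

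\textbf{Step 4: conclusion.} By Borel--Cantelli, a.e.\ $x$ lies in only finitely many $\mathcal D_N$. For such $x$ and a generalized eigenvalue $E$, the polynomial bound on $u$ forces the window near $0$ to be resonant: otherwise the Poisson formula would give $u\equiv0$ there. Hence, for large $n$, every window $[n/2,2n]$ is nonresonant. Applying the Poisson formula $u_m=-G(m,a)u_{a-1}-G(m,b)u_{b+1}$ on such windows and covering them by the Green's function bounds of Step 2 yields $|u_m|\le e^{-(L-\epsilon)|m|/2}$. Lemma~\ref{r} then gives Anderson localization for all $\lambda>\lambda_0(f)$.
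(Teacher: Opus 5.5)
Your outline follows the same Bourgain--Schlag architecture as the paper. The gap is that Steps 3 and 4 use two different notions of local resonance, and the double-resonance argument depends on exactly that junction.

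In Step 3 you reduce to the at most $N+1$ eigenvalues $E_j(x)$ of the local box. That reduction is legitimate only if ``locally resonant'' means $E$ lies within a distance far smaller than $(C\lambda)^{-N}$ of $\sigma(H_{\mathrm{loc}}(x))$. That is the precision at which the distant condition $\frac1N\log\|A_N(2^nx,E)\|<L_N(E)-\kappa L$ is stable in $E$ (compare \eqref{p13}).

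In Step 4, however, $u_0=1$ and polynomial boundedness only show that the Green's function on $[0,N]$ fails to decay. Indeed, $u_0=-G_{[0,N]}(E)(0,N)\,u_{N+1}$ gives only $\|G_{[0,N]}(E)\|\gtrsim N^{-\delta}$, i.e.\ $\operatorname{dist}(E,\sigma(H_{[0,N]}(x)))\lesssim N^{\delta}$, which is useless. You cannot bypass this with an energy net either. A net of mesh $(C\lambda)^{-N}$ has about $(C\lambda)^{N}$ points, and no fixed-energy bound of the form $e^{-cN^{\sigma}}$ (or the paper's $e^{-cN/\log\lambda}$) survives that union bound. Your Step 2 estimates all hold for a fixed $E$ off an $E$-dependent exceptional set. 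So they cannot be applied to the uncountably many generalized eigenvalues of a fixed $H(x)$.

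The missing idea is an energy-uniform statement for the fixed phase $x$ that produces a good window near the origin, at a scale larger than that of the distant windows. The paper gets this in Proposition~\ref{le62}. It averages $\frac1n\log\|A_n(2^{k}x,E)\|$ over $r=n^4$ shifts, and Bourgain's martingale bound (Lemma~\ref{le61}) then gives a deviation probability $e^{-c\lambda^{-4}n^{2}}$. That bound does beat the $(4\lambda)^{n+1}$-point energy net. Borel--Cantelli then yields a full-measure set $\mathcal G$ on which, for every $|E|\le 2\lambda$ simultaneously, some shift is good.

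Part~1 of the proof of Theorem~\ref{th3} uses such a window of length $N^3$ near $N^{12}$ to obtain $|u_{N_1+1}|<e^{-N^2}$. Hence $\|G_{N_1}(x,E)\|>e^{N^2}$ with $N_1\sim N^{12}$. Only this strong local resonance, at precision $e^{-N^2}\ll(4\lambda)^{-N}$ and at the larger scale $N_1$, makes the reduction to the $N_1+1$ eigenvalues of $H_{N_1}(x_j)$ in Proposition~\ref{p3} valid. Your scheme uses the same scale $N$ for the local box and the distant windows and has no analogue of $\mathcal G$, so Step 3 as written does not go through.

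A lesser point: the local event does not depend only on the first $N$ binary digits of $x$, since $f(x)$ depends on all of them. You must freeze $x$ on intervals of length $\ll(C\lambda)^{-N}$, as the paper does with $|I_j|\sim e^{-N^2}$.
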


Without loss of generality, we assume the function $f$ satisfies $f(0)=0$, $\lim_{x\rightarrow 1{-}}f(x)=1$, right derivative $f'_+(0)=1$ and $f'(x)\in(c,1]
$ for  $x\in(0,1)$. For estimates that hold uniformly for sufficiently large $\lambda$ or for all ${E}\in\sigma(H(x))\subseteq  [-2\lambda,2\lambda]$, we will  leave the dependence on $\lambda$ or  $E$ implicit.

\section{ The Schr\"{o}dinger Cocycle in  Polar Coordinates}
We define a function
\begin{equation}\label{b01}
g(x)\doteq({E}/{\lambda}-f(x))^2+1,
\end{equation}
where ${E}\in\sigma(H(x))\subseteq  [-2\lambda,2\lambda]$.
 Subsequently, we  introduce a new transfer matrix  $B(x)=B(x,E)$ and the corresponding cocycle $(T, B(x )): (x, v) \mapsto (Tx, B(x )v)$ on $\mathbb{T} \times \mathbb{R}^2$.
The cocycle is defined as $(T, B(x ))^n = (T^nx, B_n(x ))$, where
\begin{equation}\label{b}
B_n(x ) =
B(T^{n-1}x )B(T^{n-2}x ) \cdots B(x ).
\end{equation}

\begin{lemma}\label{le1}
There exists $\lambda_0=\lambda_0(f)>0$ such that for all $\lambda>\lambda_0$, $x\in\T$  and $E\in [-2\lambda,2\lambda]$,  the transfer matrix $B(T^nx )$ $(n\geq0)$ admits
\begin{align}
B(T^nx )&=\Lambda(T^{n+1}x )\cdot R_{{\theta}(T^nx )}\notag\\
&=
\left(
\begin{matrix}
\lambda\sqrt{g(T^{n+1}x )} & 0\\
0 & (\lambda\sqrt{g(T^{n+1}x )})^{-1}
\end{matrix}
\right)\cdot
\left(
\begin{matrix}
\frac{{E}/{\lambda}-f(T^nx)}{\sqrt{g(T^{n}x )}} & \frac{-1}{\sqrt{g(T^{n}x )}}\\
\frac{1}{\sqrt{g(T^{n}x )}} & \frac{{E}/{\lambda}-f(T^nx)}{\sqrt{g(T^{n}x )}}
\end{matrix}
\right),\label{b02}
\end{align}
where
\begin{equation}\label{b03}
\cot{\theta}(T^nx )={\frac{{E}/{\lambda}-f(T^nx)}{\sqrt{g(T^{n}x )}}}\Big/  {\frac{1}{\sqrt{g(T^{n}x )}}}={E}/{\lambda}-f(T^nx).
\end{equation}
 Moreover,  the Lyapunov exponent $L_B(E)$ of the cocycle $(T,B(x))$ satisfies
\begin{equation}\label{lbl}
L_B(E)= L(E).
\end{equation}
\end{lemma}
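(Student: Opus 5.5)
The plan is to treat \eqref{b02} as the definition of $B(x)=B(x,E)$ and verify three things: the factors are legitimate, the cocycle $(T,B)$ is $\mathrm{SL}(2,\R)$-valued, and it is cohomologous to $(T,A)$ through a conjugacy that is uniformly bounded in $x$. The last point gives \eqref{lbl}.

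\textbf{Factors and determinant.} Since $E\in[-2\lambda,2\lambda]$ and (after the normalization) $f(\T)\subseteq[0,1]$, we have $E/\lambda-f(x)\in[-3,2]$. Hence
\[
1\le g(x)\le 10
\]
uniformly in $x\in\T$, $E$ and $\lambda$; in particular $\sqrt{g}$ never vanishes. The second factor in \eqref{b02} has entries $a/\sqrt{g}$ and $\pm1/\sqrt{g}$ with $a=E/\lambda-f(T^nx)$ and $a^2+1=g(T^nx)$. It is therefore a genuine rotation $R_\theta$ whose angle is determined (mod $\pi$) by $\cot\theta=a$, which is \eqref{b03}. The first factor $\Lambda(T^{n+1}x)$ is diagonal with determinant $1$, so $\det B=1$. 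Because $B(T^nx)$ is just $B(\cdot)$ evaluated along the orbit, it is enough to treat $n=0$.

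\textbf{Conjugacy.} Multiplying out gives
\[
B(x)=\begin{pmatrix}\lambda s\,a & -\lambda s\\ (\lambda t)^{-1} & a(\lambda t)^{-1}\end{pmatrix},\qquad s=\sqrt{g(Tx)/g(x)},\quad t=\sqrt{g(Tx)g(x)},
\]
to be compared with $A(x)=\begin{pmatrix}\lambda a&-1\\1&0\end{pmatrix}$. I look for $Q:\T\to \mathrm{GL}(2,\R)$, built from $\lambda$ and $\sqrt{g(x)}$ (and, if needed, the rotation $R_{\theta(x)}$), such that
\[
B(x)=Q(Tx)\,A(x)\,Q(x)^{-1}.
\]
A natural first ansatz is $Q(x)=D(x)R_{\theta(x)}^{\pm1}$ with $D$ diagonal, or $Q$ lower triangular. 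I would match the four entries to get functional equations for the entries of $Q$; for instance, the top-right entries force $q_{11}(Tx)=\lambda s\,q_{22}(x)$. The aim is to solve these explicitly in terms of $g$ and $\lambda$. If an exact solution is not available, I will settle for the weaker statement that $B(x)$ and $A(x)$ agree up to left and right multiplication by matrices $M_1(x),M_2(x)$ with $\|M_i^{\pm1}\|\le C_\lambda$, arranged so that these corrections telescope along orbits. The $\lambda_0$ in the statement would enter only here, if some correction has to be shown invertible with bounded inverse for large $\lambda$; the uniform bound $1\le g\le 10$ suggests it can be taken uniform in $E$.

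\textbf{Lyapunov exponents.} Telescoping gives
\[
B_n(x)=Q(T^nx)\,\widetilde A_n(x)\,Q(x)^{-1},
\]
where $\widetilde A_n$ is the product of $A$ along the orbit with the indexing of \eqref{b}; this differs from \eqref{a} only by a shift, which does not change the exponent. Hence
\[
\Big|\log\|B_n(x)\|-\log\|\widetilde A_n(x)\|\Big|\le 2\log\Big(\sup_{\T}\|Q\|\,\sup_{\T}\|Q^{-1}\|\Big)=O_\lambda(1).
\]
Dividing by $n$ and using Kingman's theorem (a.e. convergence, as stated after \eqref{2.6}) gives $L_B(E)=L(E)$. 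This also holds at the jump point $x=0$, since only measurability and the uniform bounds are used.

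\textbf{Main obstacle.} The difficult step is producing the conjugacy $Q$ explicitly. The diagonal factor $\Lambda$ is evaluated at $Tx$ while the rotation is evaluated at $x$, and this mismatch must be absorbed exactly by $Q(Tx)$ versus $Q(x)^{-1}$. I expect the correct $Q$ to mix a diagonal scaling by $\sqrt{g}$ with $\lambda$-dependent scalings, and I would find it by solving the entrywise equations directly.
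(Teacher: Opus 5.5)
Your plan has a genuine gap at the step you call the main obstacle. If $B$ is taken literally as in \eqref{b02}, the conjugacy $Q$ you want to solve for does not exist. So entrywise matching cannot succeed, and your fallback with corrections $M_1,M_2$ that telescope along orbits is the same requirement. You can see this at the fixed point $x=0$ of $T$. Any pointwise identity $B(x)=Q(Tx)A(x)Q(x)^{-1}$, in particular one built from $\lambda$, $g$, $\theta$, makes $B(0)$ similar to $A(0)$, so their traces must agree. With $f(0)=0$, $a=E/\lambda$ and $g=a^2+1$, we have $\mathrm{tr}\,A(0)=E$. Your own product formula gives $\mathrm{tr}\,B(0)=\lambda a+\frac{a}{\lambda g}=E+\frac{E}{\lambda^2+E^2}$. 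What is exactly true is the following. Put $\tilde A=\mathrm{diag}(\lambda^{-1/2},\lambda^{1/2})\,A\,\mathrm{diag}(\lambda^{1/2},\lambda^{-1/2})$ and $a(y)=E/\lambda-f(y)$. A direct computation gives $R_{\theta(y)}^{T}\tilde A(y)=\Lambda(y)N(y)$ with $N(y)=\begin{pmatrix}1&-a(y)/(\lambda^2 g(y))\\0&1\end{pmatrix}$, hence
\[
R_{\theta(Tx)}^{-1}\,\tilde A(Tx)\,R_{\theta(x)}=\Lambda(Tx)\,N(Tx)\,R_{\theta(x)}.
\]
This agrees with \eqref{b02} only up to the factor $N(Tx)=I+O(\lambda^{-2})$. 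Your rotation ansatz is the right one, but it produces this cocycle, not \eqref{b02}.

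The paper avoids the difficulty by reversing the logic. It \emph{defines} $B(x):=U(Tx)^{-1}\tilde A(Tx)U(x)$, where $U$ is the orthogonal factor from the polar decomposition of $\tilde A$. Then $B_n(x)=U(T^nx)^{-1}\,\mathrm{diag}(\lambda^{-1/2},\lambda^{1/2})\,A_n(x)\,\mathrm{diag}(\lambda^{1/2},\lambda^{-1/2})\,U(x)$, which gives $\lambda^{-1}\|A_n\|\le\|B_n\|\le\lambda\|A_n\|$ and hence $L_B=L$ at once. The explicit form \eqref{b02} appears only afterwards, when the $\lambda$-dependent rotation entries are replaced by their $\lambda\to\infty$ limits; this is where $\lambda_0$ enters. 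To repair your argument, do the same. Take $B$ to be a conjugated cocycle, for instance the one displayed above, and prove \eqref{lbl} for it exactly by the telescoping you already describe. Then treat \eqref{b02} as its large-$\lambda$ form. You cannot have \eqref{b02} literally and an exact conjugacy to $A$ given by such an everywhere-defined formula at the same time.
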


\Proof
Following the approach in \cite[Appendix A.1]{WZ} and \cite[Appendix A]{Z24}, we convert the original Schr\"{o}dinger cocycle into its equivalent  polar coordinate representation. Consider the transfer matrix $A(x,E)$ defined in \eqref{eq:trans-matrix}. For large  $\lambda >0 $, we apply a diagonal similarity transformation with
\begin{equation}
Q = \begin{pmatrix} \sqrt{\lambda}^{-1} & 0 \\ 0 & \sqrt{\lambda} \end{pmatrix}
\end{equation}
to obtain the rescaled matrix:
\begin{equation}
\tilde{A}(x,E) = Q A(x,E) Q^{-1} = \begin{pmatrix} \lambda [\frac{E}{\lambda} - f(x)] & -1/\lambda \\ \lambda & 0 \end{pmatrix}.
\end{equation}
 This rescaling preserves the Lyapunov exponent since $Q$ is diagonal with constant determinant.

For each $x \in \mathbb{T} \setminus \{0\}$ (where $f$ is $C^1$), we perform the polar decomposition of $\tilde{A}(x,E)$. As shown in \cite[Appendix A.1]{WZ} and \cite[Appendix A]{Z24}, there exist orthogonal matrices $U_1(x), U_2(x) \in \mathrm{SO}(2,\mathbb{R})$ and a diagonal matrix
\begin{equation}
\Lambda(x) = \begin{pmatrix}  \lambda\sqrt{g(x )} & 0 \\ 0 & (\lambda\sqrt{g(x )})^{-1} \end{pmatrix}
\end{equation}
such that:
\begin{equation}
\tilde{A}(x,E) = U_1(x) U_2(x) \Lambda(x) U_2^T(x).
\end{equation}
Define $U(x) = U_1(x) U_2(x)$. Then we have
\begin{equation}
U^{-1}(Tx) \tilde{A}(Tx,E) U(x) = \Lambda(Tx) \cdot R_{\theta(x)},
\end{equation}
where $R_{\theta(x)}=U^T_2(Tx) (U_1U_2)(x)$ is a rotation matrix and expressed as
\begin{align}
     R_{\theta(x)}=\left(\begin{matrix}
c(x,t,\lambda,T) & -\sqrt{1-c^2(x,t,\lambda,T)}\\
\sqrt{1-c^2(x,t,\lambda,T)} & c(x,t,\lambda,T)\end{matrix}\right),
\end{align}
where $t=\frac{E}{\lambda}\in[-2,2]$. For sufficiently large $\lambda$, we replace $c(x,t,\lambda,T)$ by $c(x,t,\infty ,T)=\frac{t-f(x)}{\sqrt{(t-f(x))^2+1}}$ ($T$ denotes the doubling map) since the validity of this replacement  is ensured by the $C^1$-closeness to the limiting case $\lambda\rightarrow \infty $, as detailed in \cite[Appendix A.1]{WZ} or \cite[Appendix A]{Z24}.
Hence, the explicit form of the right-hand side is the polar coordinate representation.

By defining
\begin{equation}
B(x)=U^{-1}(Tx)\tilde{A}(Tx,E)U(x),
\end{equation}
 we obtain
\begin{equation}
B(x) = \Lambda(Tx)\cdot R_{\theta(x)},
\end{equation}
which is exactly the form given in \eqref{b02}.

The conjugacy relation implies that for any $n \geq 1$:
\begin{equation}
B_n(x) = B(T^{n-1}x) B(T^{n-2}x)  \cdots B(x) =  U^{-1}(T^n x) QA_n(x)Q^{-1} U(x) ,
\end{equation}
where $A_n(x) = A(T^{n}x) A(T^{n-1}x)  \cdots A(Tx)$ is the original cocycle. Since $U(x)$ is orthogonal and $Q$ is constant-diagonal, we have the norm inequalities:
\begin{align}
\|B_n(x)\| \leq  \|U^{-1}(T^n x)\|\cdot \|Q\| \cdot \|A_n(x)\|  \cdot\|Q^{-1}\| \cdot \|U(x)\| \le\lambda \|A_n(x)\|, \label{eq:ab1}\\
\|A_n(x)\|  \leq \|Q^{-1}\|\cdot \|U(T^n x) \|\cdot \|B_n(x)\|\cdot \|U^{-1}(x)\|\cdot \| Q\| \le  \lambda  \|B_n(x)\|,\label{eq:ab2}
\end{align}
where we used $\|Q\| = \|Q^{-1}\| =\sqrt{\lambda}$. These imply cocycle $B_n(x)$ is the equivalent form of  original cocycle $A_n(x)$:
\begin{equation}\label{abn}
\frac{1}{\lambda} \|A_n(x)\| \leq \|B_n(x)\| \leq \lambda \|A_n(x)\|.
\end{equation}
Taking logarithms, averaging over $n$, and integrating over $x \in \mathbb{T}$, we obtain:
\begin{equation}
L_n^A(E) - \frac{\log \lambda}{n} \leq L_n^B(E) \leq L_n^A(E) + \frac{\log \lambda}{n},
\end{equation}
where $L_n^A(E) = \frac{1}{n} \int_{\mathbb{T}} \log \|A_n(x)\| \, dx$ and similarly for $L_n^B(E)$. Taking the limit $n \to \infty$, the terms $\frac{\log \lambda}{n} \to 0$, yielding $L_B(E) = L(E)$. This completes the proof of \eqref{lbl}. \hfill \qedbox

\medskip

Based on the formalism of Zhang \cite{Z24}, we  analyze the long-term behavior of the cocycle $(T,B(x ))$  to derive the properties of   $\boldsymbol{v}_n(x) = B_n(x)\boldsymbol{v}_1$, where the initial  vector  $\boldsymbol{v}_1= (\cos\beta , \sin \beta )^T $ is an arbitrary  unit vector.

After $n\geq1$ full iterations of cocycle in polar coordinates,   the total rotation angle  is denoted by $\phi_n(x)$. Omitting the scaling component at the $n$th step, the corresponding cumulative angle is denoted by $\theta_{n-1}(x)$.  The recurrence relations satisfied by the angles are as follows:
\begin{prop}\label{prop1}
For sufficiently large $\lambda$ and all  $E\in [-2\lambda,2\lambda]$, the rotation angle $\phi_n(x)$ $(n\geq1)$ and the auxiliary angle $\theta_n(x)$ $(n\geq0)$ satisfy
\begin{align}
&\theta_{n-1}(x)=\phi_{n-1}(x)+\theta_0(T^{n-1}x),\label{b07}
\\&\cot\phi_n(x)=\lambda^2g(T^{n}x)\cot\theta_{n-1}(x),\label{b06}
\end{align}
where $ \theta_{0}(x)={\theta}(x)+\beta$ and $ \theta_{n-1}(x),\phi_n(x):\T\rightarrow\R$.

\end{prop}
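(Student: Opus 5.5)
The plan is to prove the recurrence by induction on $n$. The only input is the factorization $B(T^nx)=\Lambda(T^{n+1}x)\,R_{\theta(T^nx)}$ from Lemma~\ref{le1}, valid for $\lambda>\lambda_0$ and all $E\in[-2\lambda,2\lambda]$. So each step of the cocycle is a rotation followed by a diagonal hyperbolic scaling, and I will track how each of these two operations acts on the direction of a vector.

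First I will record two elementary facts about the projective action on directions $(\cos\alpha,\sin\alpha)^T$. A rotation $R_\theta$ sends the angle $\alpha$ to $\alpha+\theta$. The diagonal matrix $\mathrm{diag}(a,a^{-1})$ sends $(\cos\alpha,\sin\alpha)^T$ to $(a\cos\alpha,a^{-1}\sin\alpha)^T$, whose angle $\varphi$ satisfies
\begin{equation*}
\cot\varphi=\frac{a\cos\alpha}{a^{-1}\sin\alpha}=a^2\cot\alpha .
\end{equation*}
Applied with $a=\lambda\sqrt{g(T^{n}x)}$, the second fact produces exactly the factor $\lambda^2g(T^nx)$ appearing in \eqref{b06}. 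Since $g\ge1$ and $\lambda$ is large, $a>0$. Hence $\varphi$ lies in the same half-open interval $[k\pi,(k+1)\pi)$ as $\alpha$, and the angle can be lifted continuously and unambiguously to a real-valued function. This gives the meaning of $\theta_{n-1},\phi_n:\T\to\R$: we fix the lift by requiring $\phi_n$ and $\theta_{n-1}$ to lie in the same $\pi$-interval.

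\emph{Base case.} Set $\phi_0\equiv 0$. By \eqref{b07} with $n=1$, this gives $\theta_0(x)=\theta(x)+\beta$, which is precisely the angle of $R_{\theta(x)}\boldsymbol v_1$. Applying $\Lambda(Tx)$ then yields $\cot\phi_1=\lambda^2g(Tx)\cot\theta_0(x)$, which is \eqref{b06} for $n=1$.

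\emph{Inductive step.} Assume $\boldsymbol v_n$ has direction angle $\phi_n$. By \eqref{b}, $\boldsymbol v_{n+1}=B(T^nx)\boldsymbol v_n=\Lambda(T^{n+1}x)R_{\theta(T^nx)}\boldsymbol v_n$. The rotation produces the intermediate cumulative angle $\theta_n=\phi_n+\theta(T^nx)$, which is \eqref{b07} (with index shifted). The scaling then gives $\cot\phi_{n+1}=\lambda^2 g(T^{n+1}x)\cot\theta_n$, which is \eqref{b06}.

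The main obstacle I expect is purely one of bookkeeping. As written, \eqref{b07} uses $\theta_0(T^{n-1}x)=\theta(T^{n-1}x)+\beta$ rather than $\theta(T^{n-1}x)$, so the phase offset $\beta$ has to be handled consistently. I would fix this by measuring all angles $\phi_n$ relative to the initial phase, i.e. in the frame rotated by $\beta$, so that the one-step rotation angle at every step is $\theta_0(T^{n-1}x)$. Equivalently, I would absorb $\beta$ into the definition of $\phi_n$ and check that the cotangent relation \eqref{b06} is stated for the same shifted angles. I would verify that this convention matches the base case and is preserved under the inductive step. Finally, I would note that the representation is uniform in $x$, including at the discontinuity $x=0$, since $f(0)$ is defined by right-continuity and enters only through $g$ and $\theta$.
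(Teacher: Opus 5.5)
The mechanism you use is the paper's: write $B(T^nx)=\Lambda(T^{n+1}x)R_{\theta(T^nx)}$ as a rotation followed by a positive diagonal scaling, whose action on angles is $\cot\varphi=a^2\cot\alpha$. The paper only does the $n=1$ computation and asserts the rest, so your induction is more explicit. The problem is your final paragraph. Your inductive step correctly gives $\theta_n=\phi_n+\theta(T^nx)$. That is \emph{not} \eqref{b07}, which has $\theta_0(T^nx)=\theta(T^nx)+\beta$, i.e.\ an extra $\beta$ at every step.

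Your proposed repair cannot work, for two reasons. First, passing to a frame rotated by $\beta$ conjugates $R_{\theta(T^nx)}$ to itself, so it never produces a per-step rotation by $\theta(T^nx)+\beta$. Second, it replaces $\mathrm{diag}(a,a^{-1})$ by $R_{-\beta}\,\mathrm{diag}(a,a^{-1})R_{\beta}$, whose action on angles is no longer $\cot\varphi=a^2\cot\alpha$ unless $\beta\in\pi\Z$. The cotangent law holds only for angles measured from the $e_1$-axis, so shifting all angles by $\beta$ loses \eqref{b06}, while not shifting them leaves the extra $\beta$ in \eqref{b07}.

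In fact \eqref{b07} as printed is false for $n\ge 2$ unless $\beta\in\pi\Z$. Take $\beta=\pi/2$. The printed recurrence (with $\phi_0=0$) gives the correct $\phi_1$. It then gives $\cot\phi_2=\lambda^2g(T^2x)\cot\bigl(\phi_1+\theta(Tx)+\tfrac{\pi}{2}\bigr)=-\lambda^2g(T^2x)\tan\bigl(\phi_1+\theta(Tx)\bigr)$. The true angle of $B_2(x)\boldsymbol v_1$ satisfies $\cot\phi_2=\lambda^2g(T^2x)\cot\bigl(\phi_1+\theta(Tx)\bigr)$. Since $-\tan u=\cot u$ has no solution, these never agree.

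So this is an error in the statement, not a convention to reconcile. The same slip appears in the proof of Lemma~\ref{le01}(a), where the step-$n$ rotation is written as $\theta_0(T^nx)$. What your induction does prove, and what is used downstream (\eqref{011} only needs $\theta_n$ to be the true pre-scaling angle), is the corrected recurrence: $\phi_0:=\beta$, $\theta_{n-1}(x)=\phi_{n-1}(x)+\theta(T^{n-1}x)$, $\cot\phi_n(x)=\lambda^2g(T^nx)\cot\theta_{n-1}(x)$. This still gives $\theta_0=\theta+\beta$ at $n=1$. You should state and prove that version rather than try to force the printed one.
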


\Proof
Based on Lemma \ref{le1},  we apply $B(Tx)$  to the initial vector $\boldsymbol{v}_1= (\cos\beta , \sin \beta )^T $:
\begin{align}
B(x)\cdot\boldsymbol{v}_1&=\left(
\begin{matrix}
\lambda\sqrt{g(Tx)}\cos({\theta}(x)+\beta)\\
(\lambda\sqrt{g(Tx)})^{-1}\sin({\theta}(x)+\beta) \end{matrix}
\right). \label{prop4}
\end{align}
Hence, the  rotation angle $\phi_1(x)$  is
\begin{equation}
\cot\phi_1(x)=\lambda^2g(Tx)\cot({\theta}(x)+\beta).
\end{equation}
Let $ \theta_{0}(x)={\theta}(x)+\beta$. After  $n$ iterations,  we obtain \eqref{b07} and \eqref{b06} for all $n\geq1$.
\hfill \qedbox
\medskip

\begin{remark}\label{re1}
From the transfer matrix  $B(T^nx)$,  starting from  the corresponding solution vector with the  angle $\theta_{n-1}(x)$, we scale the  vector by multiplying its $x$-coordinate by $(\lambda\sqrt{g(T^{n}x )})$	and its $y$-coordinate by  $(\lambda\sqrt{g(T^{n}x )})^{-1}$. This transformation yields the rotation angle $ \phi_n(x)$.
Hence  the resulting vector does not cross the $x$-axis and $|\phi_n(x)-\theta_{n-1}(x)|\leq \pi$. Consequently, the rotation angle for the rotation angle $\phi_n(x)$ $(n\geq1)$ can be expressed as
\begin{equation}\label{prop2}
\phi_n(x)=\phi_n(x) =\begin{cases}
\mathrm{arccot}(\lambda^2g(T^{n}x)\cot\theta_{n-1}(x))+\tilde{n}\pi, & \theta_{n-1}(x) \notin   \pi\Z,\\
\tilde{n}\pi, & \theta_{n-1}(x) \in  \pi\Z,\end{cases}
\end{equation}
where $\tilde{n}=\lfloor\frac{\theta_{n-1}(x)}{\pi}\rfloor\in\Z$
accounts for the correct branch of the arccotangent function.
\end{remark}

We extend the range of values for ${E}/{\lambda}$ (i.e., $t$ in \cite{Z24}) while preserving the boundedness of the function $g$. Zhang's conclusions remain valid under this condition, and the proof follows the same method, which is omitted here for brevity. The Corollary 1 of \cite{Z24} is provided as follows.

\begin{lemma}\label{Z2}  For sufficiently large $\lambda$ and all  $E\in [-2\lambda,2\lambda]$,  define a set
\begin{equation}
\mathcal{S}_{n}(\delta) :=  \left\{ x \in \T : \left\|\theta_{n}(x) -\frac{\pi}{2}\right\|_{\R\mathbb{P}^1} < \delta \right\},
\end{equation}
where $\|\cdot\|_{\R\mathbb{P}^1}$ denotes the distance to the nearest point in $\pi\Z$. It holds that
$\Leb(\mathcal{S}_{n}(\delta)) < C_f \delta$ ($ C_f$ is a positive constant depending only on $f$).
\end{lemma}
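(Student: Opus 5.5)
We will prove the bound $\Leb(\mathcal{S}_n(\delta))<C_f\delta$ by tracking how the angle $\theta_n$ moves with $x$. The claim is that $\theta_n$ is strictly monotone in $x$ on each dyadic interval of generation $n+1$, with derivative essentially fixed by the last step. We then count preimages of the small window around $\pi/2$ and weigh each by the inverse derivative.

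\textbf{Step 1: recursion for the derivative.} By \eqref{b07},
\[
\theta_n(x)=\phi_n(x)+\theta_0(T^nx), \qquad \theta_0(y)=\theta(y)+\beta, \qquad \cot\theta(y)=E/\lambda-f(y).
\]
On each dyadic interval $I$ of length $2^{-n-1}$ on which $T^jx$ avoids $0$ for $j\le n$, every term is $C^1$.

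\begin{itemize}
\item Since $f'\in(c,1]$, differentiating $\cot\theta=t-f$ gives $\theta'(y)=f'(y)\sin^2\theta(y)$. This has a fixed sign and is comparable to $\sin^2\theta$.
\item Hence $\frac{d}{dx}\theta_0(T^nx)=2^n f'(T^nx)\sin^2\theta(T^nx)$.
\item Differentiating \eqref{b06} gives
\[
\phi_n'=\frac{\lambda^2 g\,\theta_{n-1}'}{\sin^2\theta_{n-1}+\lambda^4g^2\cos^2\theta_{n-1}}\,\sin^2\!\!\ \text{-type factor} \;-\;(\text{a term involving }(g\circ T^n)'\cot\theta_{n-1}).
\]
\end{itemize}

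We show inductively that all the $\theta_j'$ share one sign, and that $\phi_n'$ is nonnegative up to an error. When $\theta_{n-1}$ is away from $\pi/2$ mod $\pi$, the cocycle contracts strongly toward the horizontal direction. In that regime $\phi_n'$ is tiny, of order $\lambda^{-2}$ times the previous derivative, plus $O(2^n\lambda^{-2})$ coming from the $g$-term.

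\textbf{Step 2: two regimes for the derivative of $\theta_n$.} As a result, off a set of bad $x$ the derivative is dominated by the fresh contribution, giving $\theta_n'(x)\gtrsim 2^n f'(T^nx)\sin^2\theta(T^nx)$.

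\begin{itemize}
\item Near the window $\theta_n\approx\pi/2$, the quantity $\phi_n$ lies near $0$ mod $\pi$. This forces $\theta(T^nx)$, and hence $\theta_0(T^nx)$, to be near $\pi/2-\beta$.
\item There $\sin^2\theta(T^nx)$ is bounded below, except where $t-f(T^nx)$ is large. That cannot happen, because $|t|\le2$ and $f\in[0,1]$ give $\sin^2\theta\ge 1/10$ everywhere.
\item So $\theta_n'\ge c\,2^n$ wherever the monotonicity from Step 1 holds.
\end{itemize}

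On each of the $2^{n+1}$ intervals $I$, the total variation of $\theta_n$ is $O(1)$ mod $\pi$, because $\theta_0\circ T^n$ sweeps at most one interval of length $<\pi$. The preimage of the $\delta$-window within $I$ is therefore a bounded number of intervals, each of length at most $\delta/(c2^n)$.

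\textbf{Step 3: summing over intervals.} Summing over the $2^{n+1}$ intervals gives
\[
\Leb(\mathcal{S}_n(\delta))\le 2^{n+1}\cdot C\delta/(c2^n)=C_f\delta.
\]
The points where some $T^jx=0$ form a finite set, so they are irrelevant. We only need right-continuity of $f$ and the finiteness of $f'_+(0)$ so that the one-sided derivatives make sense up to the endpoints.

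\textbf{Main obstacle.} The hard part is the sign control in Step 1. The term from differentiating $g(T^nx)$ in \eqref{b06} carries a factor $2^n$ and has indefinite sign. It must be shown negligible exactly in the window where $\theta_{n-1}$ is near $0$ mod $\pi$. There $\cot\theta_{n-1}$ is large, and the prefactor $\lambda^2$ becomes small only after the division by $\sin^2+\lambda^4g^2\cos^2$.

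To handle it, I would first write $\phi_n=\operatorname{arccot}(\lambda^2 g\cot\theta_{n-1})$, so that
\[
\partial_x\phi_n=\frac{\lambda^2 g\,\theta_{n-1}'-\lambda^2 (g\circ T^n)'\sin\theta_{n-1}\cos\theta_{n-1}}{\sin^2\theta_{n-1}+\lambda^4g^2\cos^2\theta_{n-1}}.
\]
The second term is bounded by $C2^n\lambda^{-2}$ uniformly, since $|\sin\cos|/(\sin^2+\lambda^4\cos^2)\le\lambda^{-2}/2$. It is therefore absorbed into the main term $c2^n$ once $\lambda>\lambda_0$. This is the point where large coupling is used, as in Zhang's argument \cite{Z24}.
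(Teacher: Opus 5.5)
Your plan is to show that $\theta_n$ is increasing with $\theta_n'\ge c_12^n$ on each of the $2^n$ continuity intervals, and then count preimages of the window. That is a sensible self-contained route; the paper gives no argument of its own and simply invokes Corollary~1 of \cite{Z24}. However, two steps fail as written.

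\textbf{The $g$-term is not $O(2^n\lambda^{-2})$.} In your final estimate you dropped the factor $\lambda^2$ in the numerator. From $\sin^2\theta_{n-1}+\lambda^4g^2\cos^2\theta_{n-1}\ge 2\lambda^2 g|\sin\theta_{n-1}\cos\theta_{n-1}|$ you only get the bound $|(g\circ T^n)'|/(2g)$. This bound is attained where $\tan\theta_{n-1}=\pm\lambda^2 g$. Since $|g'|=2|t-f|f'$, the bound equals $2^n|t-f(T^nx)|f'(T^nx)/g(T^nx)$. That is $|t-f(T^nx)|$ times the fresh term $2^nf'(T^nx)/g(T^nx)$, so for $t=2$ it is at least as large as the fresh term. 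It also takes both signs.

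So the term cannot be absorbed by taking $\lambda$ large, and an induction on the sign of $\theta_j'$ alone does not close. The repair is to induct on the quantitative bound $\theta_{n-1}'\ge c_12^{n-1}$ and split into two cases.
\begin{itemize}
\item If $|\cos\theta_{n-1}|\le\eta$ with $\eta\le c_1/12$, then $|(g\circ T^n)'\sin\theta_{n-1}\cos\theta_{n-1}|\le 6\cdot 2^n\eta\le g(T^nx)\,\theta_{n-1}'$. Hence $\phi_n'\ge 0$: the $\lambda^2$-amplified old derivative dominates.
\item If $|\cos\theta_{n-1}|>\eta$, the denominator is at least $\lambda^4\eta^2$. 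The $g$-term is then at most $6\cdot 2^n\lambda^{-2}\eta^{-2}$, which is absorbed for large $\lambda$.
\end{itemize}
Using $\sin^2\theta\ge 1/10$ and taking $c_1=c/20$, this gives $\theta_n'\ge c_12^n$ uniformly in $n$.

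\textbf{The preimage count is unjustified.} The number of components of $\{x\in I:\theta_n(x)\in \pi/2+\pi\Z+(-\delta,\delta)\}$ is controlled by the lifted increase of $\theta_n$ on $I$. That increase includes the increase of $\phi_n$, and $\phi_n$ gains $\pi$ every time $\theta_{n-1}$ passes through $\pi/2+\pi\Z$, so these gains accumulate over the steps. The fact that $\theta_0\circ T^n$ sweeps less than $\pi$ says nothing about this, and ``$O(1)$ mod $\pi$'' is not the quantity that counts preimages.

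No uniform per-interval bound follows from your argument, but an averaged one does:
\begin{itemize}
\item By Remark~\ref{re1}, $\phi_n$ and $\theta_{n-1}$ lie in the same interval $[\tilde n\pi,(\tilde n+1)\pi)$. Hence $\mathrm{Var}_I\theta_n\le \mathrm{Var}_I\theta_{n-1}+3\pi$ on each generation-$n$ interval $I$.
\item Each generation-$(n-1)$ interval splits into two generation-$n$ intervals on which $\theta_{n-1}$ is continuous and monotone. Summing over the $2^n$ generation-$n$ intervals therefore gives $\sum_I\mathrm{Var}_I\theta_n\le C2^n$.
\item Combining this with the derivative bound yields $\Leb(\mathcal{S}_n(\delta))\le\sum_I 2\delta\,(\mathrm{Var}_I\theta_n/\pi+2)/(c_12^n)\le C_f\delta$.
\end{itemize}
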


The following result is easily derived:
\begin{coro}\label{Z1}
For sufficiently large $\lambda$ and all  $E\in [-2\lambda,2\lambda]$, define a set
\begin{equation}
\Omega_0:=\bigcup_{k\in\N} \Big \{x\in\T:\cos\theta_k(x)=0\Big \}.
\end{equation}
 It holds that $\Leb(\Omega_0) =0$.
 \end{coro}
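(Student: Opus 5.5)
The plan is to obtain Corollary~\ref{Z1} from Lemma~\ref{Z2} by a monotonicity-in-$\delta$ argument, followed by countable subadditivity of Lebesgue measure. Fix $k\in\N$ and set
\begin{equation*}
Z_k:=\{x\in\T:\cos\theta_k(x)=0\}.
\end{equation*}
The condition $\cos\theta_k(x)=0$ means $\theta_k(x)\in\frac{\pi}{2}+\pi\Z$, that is, $\|\theta_k(x)-\frac{\pi}{2}\|_{\R\mathbb{P}^1}=0$. Hence $Z_k\subseteq\mathcal{S}_k(\delta)$ for every $\delta>0$.

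By Lemma~\ref{Z2}, $\Leb(Z_k)\le\Leb(\mathcal{S}_k(\delta))<C_f\delta$ for every $\delta>0$. The constant $C_f$ depends only on $f$, and in particular not on $\delta$. Letting $\delta\to0^+$ gives $\Leb(Z_k)=0$. Since $\Omega_0=\bigcup_{k\in\N}Z_k$ is a countable union, countable subadditivity yields $\Leb(\Omega_0)\le\sum_k\Leb(Z_k)=0$.

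The only point that needs care is measurability. Each $\theta_k$ is built from finitely many compositions of $f$, $g$, $T$ and the $\mathrm{arccot}$ branches in \eqref{prop2}. These are Borel functions: $f$ is piecewise $C^1$ and the floor function is Borel. Hence each $\theta_k$ is Borel, and each $Z_k$ is a Borel set. Even without this, $Z_k$ is contained in a set of outer measure less than $C_f\delta$ for every $\delta$, so it is Lebesgue-null. I expect no genuine obstacle here.
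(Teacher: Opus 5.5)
Your proposal is correct and follows the paper's proof. Both arguments apply Lemma~\ref{Z2} to each fixed $k$ and let $\delta\to0$ to get $\Leb(\{\cos\theta_k=0\})=0$, then finish by countable subadditivity; your measurability remark is a small addition the paper does not make.
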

\Proof  Let $\delta\rightarrow 0$ in Lemma \eqref{Z2}, we  obtain  that for each $k$, the set
\begin{align}
    E_k:=\{x \in \mathbb{T}: \cos\theta_k(x)=0\}
\end{align}
has Lebesgue measure zero.
Since a countable union of sets of measure zero still has measure zero, we have
\begin{align}
    \Leb(\Omega_0) =\Leb(\bigcup_{k\in\N}E_k) =0,
\end{align}
which completes the proof.
\hfill \qedbox
\medskip

Based on the above long-time behavior analysis, we derive the following properties of $\boldsymbol{v}_n(x)$:
\begin{lemma}\label{le01}
For sufficiently large $\lambda$ and each $E \in  \sigma(H(x))$, the  vector $\boldsymbol{v}  _n(x)$  for $n\geq 1$ satisfies the following properties:

$(a)$ The recurrence relation for  $n\geq 1$ is given by:
\begin{align}\label{011}
\|\boldsymbol{v}_{n+1}(x)\|^2=\Big(\lambda^2{g(T^{n+1}x)}\cos^2\theta_n(x)
+\frac{1}{\lambda^2{g(T^{n+1}x)}}\sin^2\theta_n(x)\Big)\|\boldsymbol{v}_n(x)\|^2.
\end{align}

$(b)$ The  vector $\boldsymbol{v}  _n(x)$  for $n\geq 1$  can be expressed as
\begin{align}\label{v1}
\frac{1}{n}\log\|\boldsymbol{v}_{n}(x)\|=\log\lambda+\frac{1}{n}\sum^{n-1}_{k=0}\Big(\frac{1}{2}\log{g(T^{k+1}x, t)}+\frac{1}{2}R^{(i)}_k(x)\Big),
\end{align}
where $i\in\{1,2\}$, specifically:
\begin{align}
&R^{(1)}_k(x)=\log\Big[\Big(\frac{\lambda^4{[g(T^{k+1}x)]^2}-1}{\lambda^4{[g(T^{k+1}x)]^2}+1} \Big)\cos2\theta_k(x)+1\Big]\label{r1} \,\,\,\,\,\,\,\,\text{for}\,\, x\in\T,
\\
&R^{(2)}_k(x)=2\log\left|\cos\theta_k(x)\right| + \log\left(1 + \frac{\tan^2\theta_k(x)}{\lambda^4 [g(T^{k+1}x)]^2} \right) \,\,\text{for} \,\,x\in\T\setminus \Omega_0.\label{r2}
\end{align}
\end{lemma}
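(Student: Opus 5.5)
The plan is a direct computation from the polar form of Lemma~\ref{le1} and the angle recursion of Proposition~\ref{prop1}. By Remark~\ref{re1}, after $n$ steps $\boldsymbol{v}_n(x)$ points in direction $\phi_n(x)$. Before the next scaling the vector is rotated by $\theta(T^nx)$, so its direction is $\theta_n(x)=\phi_n(x)+\theta_0(T^nx)$ (shifting the index in \eqref{b07}). Its length is still $\|\boldsymbol{v}_n(x)\|$, since a rotation preserves norms. Hence
\[
R_{\theta(T^nx)}\boldsymbol{v}_n(x)=\|\boldsymbol{v}_n(x)\|(\cos\theta_n(x),\sin\theta_n(x))^T .
\]
Applying $\Lambda(T^{n+1}x)=\mathrm{diag}(\lambda\sqrt{g(T^{n+1}x)},(\lambda\sqrt{g(T^{n+1}x)})^{-1})$ and taking the squared norm gives \eqref{011} at once. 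This is exactly the computation already done in \eqref{prop4} for $n=1$, now iterated.

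For (b), set $a_k=\lambda^2 g(T^{k+1}x)$. Then (a) reads $\|\boldsymbol{v}_{k+1}\|^2=(a_k\cos^2\theta_k+a_k^{-1}\sin^2\theta_k)\|\boldsymbol{v}_k\|^2$. Telescoping from the unit vector $\boldsymbol{v}_1$ (with the $k=0$ factor covering the first step $B(x)\boldsymbol{v}_1$, under the indexing convention of \eqref{b}), and taking logarithms, gives
\[
\log\|\boldsymbol{v}_n\|=\tfrac12\sum_{k=0}^{n-1}\log\big(a_k\cos^2\theta_k+a_k^{-1}\sin^2\theta_k\big).
\]
Factor out $a_k$: $\log a_k=2\log\lambda+\log g(T^{k+1}x)$. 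Dividing by $n$ then produces the $\log\lambda$ and $\frac{1}{2}\log g$ terms of \eqref{v1}.

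It remains to identify the residual $\log(\cos^2\theta_k+a_k^{-2}\sin^2\theta_k)$ with the two expressions $R^{(i)}_k$.

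\textbf{Form $R^{(1)}$.} Use the double-angle identities $\cos^2\theta=\frac{1+\cos2\theta}{2}$ and $\sin^2\theta=\frac{1-\cos2\theta}{2}$. This gives
\[
\cos^2\theta_k+a_k^{-2}\sin^2\theta_k=\frac{1+a_k^{-2}}{2}\Big(1+\frac{a_k^2-1}{a_k^2+1}\cos2\theta_k\Big).
\]
The right-hand bracket is exactly the argument of the logarithm in \eqref{r1}. The prefactor $\frac{1+a_k^{-2}}{2}$ is a constant factor; it must be absorbed either into the normalization or shown to be consistent with the paper's convention. This matching of constant factors is where I expect the main care to be needed. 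I would first check it against the $n=1$ case. If a residual $\log\frac{1+a_k^{-2}}{2}$ persists, I would note that it is $O(\lambda^{-4})-\log 2$ and record how the paper's normalization absorbs it.

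\textbf{Form $R^{(2)}$.} Off $\Omega_0$ we have $\cos\theta_k\neq0$, so we can factor out $\cos^2\theta_k$:
\[
\cos^2\theta_k\big(1+a_k^{-2}\tan^2\theta_k\big).
\]
Its logarithm is $2\log|\cos\theta_k|+\log(1+\tan^2\theta_k/(\lambda^4g^2))$, which is exactly \eqref{r2}. Corollary~\ref{Z1} guarantees that the excluded set $\Omega_0$ has measure zero, so this form holds for a.e.\ $x$.

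Both forms are valid for all $E\in[-2\lambda,2\lambda]\supseteq\sigma(H(x))$ and all large $\lambda$, because $g\ge1$ keeps every logarithm finite.
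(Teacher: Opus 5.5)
The place you flagged is exactly where the proposal fails. The factor $\frac{1+a_k^{-2}}{2}$ cannot be absorbed into any normalization, because \eqref{v1} is an exact identity and nothing in it can soak up a constant. Your algebra is right, and what it shows is that \eqref{r1} as stated is false. For every $k$, with $a_k=\lambda^2g(T^{k+1}x)$,
\[
R^{(1)}_k(x)-\log\Big(\cos^2\theta_k(x)+a_k^{-2}\sin^2\theta_k(x)\Big)=\log\frac{2a_k^2}{a_k^2+1}\in\Big(0,\log 2\Big).
\]
So with the stated $R^{(1)}_k$, the right side of \eqref{v1} exceeds the left side by $\frac{1}{2n}\sum_k\log\frac{2a_k^2}{a_k^2+1}\approx\frac12\log 2$, which does not vanish.

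The $n=1$ check you planned settles this. At pre-scaling angle $\theta_k=0$ the step multiplies $\|\boldsymbol{v}\|^2$ by exactly $a_k$, so the residual is $0$, while the stated $R^{(1)}_k$ equals $\log\frac{2a_k^2}{a_k^2+1}\approx\log 2$. It follows that $R^{(1)}_k\neq R^{(2)}_k$ on $\T\setminus\Omega_0$, so \eqref{v1} cannot hold for both $i=1,2$. Your closing claim that ``both forms are valid'' contradicts your own computation. The paper's proof makes the same slip: it asserts \eqref{r1} ``using the triangle inequality'' and silently drops the factor $\frac12(1+a_k^{-2})$.

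The correct conclusion is that (b) holds for $i=2$, and for $i=1$ only after correcting the formula to $R^{(1)}_k=\log\big[\tfrac12\big((1-a_k^{-2})\cos2\theta_k+1+a_k^{-2}\big)\big]$, which is just $\log(\cos^2\theta_k+a_k^{-2}\sin^2\theta_k)$. This corrected quantity lies in $[-2\log a_k,0]$, so the crude bounds later drawn from $R^{(1)}$ in Corollary~\ref{co01}(b) still hold. Apart from this, your treatment of (a) and of the form $R^{(2)}_k$ is the same computation as the paper's and is fine.

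One smaller point, inherited from Proposition~\ref{prop1} and also present in the paper's proof of (a). Rotating $\boldsymbol{v}_n$ by $\theta(T^nx)$ gives direction $\phi_n+\theta(T^nx)$, not $\phi_n+\theta_0(T^nx)=\phi_n+\theta(T^nx)+\beta$. The initial angle $\beta$ should enter only at the first step. Statement (a) is correct once $\theta_n$ is read as the actual pre-scaling direction $\phi_n+\theta(T^nx)$ for $n\ge1$.
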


\Proof
(a)  According to Lemma \ref{le1} and Proposition \ref{prop1}, we can obtain that
\begin{align}\label{013}
\boldsymbol{v}_{n+1}(x)&=B(T^{n+1}x)\boldsymbol{v}_{n}(x)\notag\\
&=B(T^{n+1}x)(\cos\phi_n(x)||\boldsymbol{v}_{n}(x)||e_1+\sin\phi_n(x)||\boldsymbol{v}_{n}(x)||e_2) \notag\\
&=\lambda\sqrt{g(T^{n+1}x)}\cos(\phi_n(x)+\theta_0(T^{n}x))||\boldsymbol{v}_{n}(x)||e_1\notag\\
&\quad +\frac{1}{\lambda\sqrt{g(T^{n+1}x)}}\sin(\phi_n(x)+\theta_0(T^{n}x))||\boldsymbol{v}_{n}(x)||e_2\notag\\
&=\lambda\sqrt{g(T^{n+1}x)}\cos\theta_n(x)||\boldsymbol{v}_{n}(x)||e_1
+\frac{1}{\lambda\sqrt{g(T^{n+1}x)}}\sin\theta_n(x)||\boldsymbol{v}_{n}(x)||e_2,
\end{align}
where  $e_1=(1,0)^T$ and $e_2=(0,1)^T$. Hence \eqref{011} is obtained by taking the square of the norm on both sides.

(b)  By Property (a) and the  unit vector $\boldsymbol{v}_1$, we have
\begin{align}\label{014}
\|\boldsymbol{v}_{n}(x)\|^2= \prod_{k=0}^{n-1} \Big(\lambda^2{g(T^{k+1}x)}\cos^2\theta_k(x)
+\frac{1}{\lambda^2{g(T^{k+1}x)}}\sin^2\theta_k(x)\Big),
\end{align}
and then
\begin{align}\label{015}
\frac{1}{n}\log\|\boldsymbol{v}_{n}(x)\|&=\frac{1}{2n}\log\prod_{k=0}^{n-1} \Big(\lambda^2{g(T^{k+1}x)}\cos^2\theta_k(x)
+\frac{\sin^2\theta_k(x)}{\lambda^2{g(T^{k+1}x)}}\Big)\notag\\
&=\frac{1}{2n}\sum_{k=0}^{n-1}\log  \Big[ \lambda^2{g(T^{k+1}x)}\cdot \Big(\cos^2\theta_k(x)
+\frac{\sin^2\theta_k(x)}{\lambda^4{[g(T^{k+1}x)]^2}}\Big)\Big].
\end{align}

For ease of discussion, we have the following two forms for the last term:
\begin{align}
  R^{(i)}_k(x)= \log\Big(\cos^2\theta_k(x)
+\frac{\sin^2\theta_k(x)}{\lambda^4{[g(T^{k+1}x)]^2}}\Big).
\end{align}
where $i\in\{1,2\}$. The first form: using the triangle inequality, we obtain
\begin{align}
R^{(1)}_k(x)=\log\Big[\Big(\frac{\lambda^4{[g(T^{k+1}x)]^2}-1}{\lambda^4{[g(T^{k+1}x)]^2}+1} \Big)\cos2\theta_k(x)+1\Big].
\end{align}
The second form: for $x\in\T\setminus\Omega_0$, we can further analyze the  item
\begin{align}\label{016}
R^{(2)}_k(x)=2\log\left|\cos\theta_k(x)\right| + \log\left(1 + \frac{\tan^2\theta_k(x)}{\lambda^4 [g(T^{k+1}x)]^2} \right),
\end{align}
and for convenience, let
\begin{align}\label{bn}
    r_k(x)=\log\left(1 + \frac{\tan^2\theta_k(x)}{\lambda^4 [g(T^{k+1}x)]^2} \right).
\end{align}
Hence, we have have the expression
\begin{align}\label{017}
\eqref{015}=\log\lambda+\frac{1}{n}\sum^{n-1}_{k=0} \Big(\frac{1}{2}\log{g(T^{k+1}x)} + \frac{1}{2}R^{(i)}_k(x)\Big),
\end{align}
where $i=1,2 $.
\hfill \qedbox
\medskip

\begin{remark}\label{re2}
From Zhang \cite{Z24}, we know that for this operator $L(E)>\log\lambda-C_0$ for all $E\in\R$. Consequently, for almost every $x\in\T$, there exists a unit vector $\boldsymbol{v}_1$ such that
\[
\lim_{n \to \infty}\frac{1}{n}\log\| \boldsymbol{v}_{n}(x)\|=\lim_{n \to \infty}\frac{1}{n}\log\|B_n(x)\boldsymbol{v}_1\|=\lim_{n \to \infty}\frac{1}{n}\log\|B_n(x)\|=L(E)>\log\lambda-C_0.
\]

By the strong mixing property of the doubling map, the terms in \eqref{v1} (aside from $R^{(1)}_k(x)$) become nearly independent of $x$ for large $n$. This implies that for sufficiently large time intervals $L$, the angles $\theta_k(x)$ and $\theta_{k+L}(x)$ are approximately independent. If this were not the case, the term $R^{(1)}_k(x)$ would exhibit significant $x$-dependence, potentially undermining the uniform positivity of the Lyapunov exponent.
\end{remark}

\section{ Large Deviation Estimate}

In this section, we derive the large deviation estimate that plays a fundamental role in the subsequent analysis.

We first present several lemmas that will be used in the derivation.
\begin{lemma}\label{le40}
For a constant \( C_\varphi > 0 \), if \( \tan\varphi > C_\varphi \), then
\[
\| \varphi - \tfrac{\pi}{2} \|_{\mathbb{RP}^1} < \arctan\left( \frac{1}{C_\varphi} \right).
\]
\end{lemma}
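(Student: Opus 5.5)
The plan is to reduce everything to a single period of the tangent function and then use the identity $\arctan C_\varphi + \arctan(1/C_\varphi) = \pi/2$ for $C_\varphi>0$. Both the hypothesis and the conclusion are invariant under $\varphi\mapsto\varphi+k\pi$ with $k\in\Z$. Indeed, $\tan$ has period $\pi$, and $\|\cdot\|_{\R\mathbb{P}^1}$ is the distance to $\pi\Z$, so it is also $\pi$-periodic. So I would first replace $\varphi$ by its representative $\tilde\varphi\in(-\pi/2,\pi/2]$ modulo $\pi$. The hypothesis $\tan\varphi>C_\varphi$ presupposes that $\tan\varphi$ is defined, which excludes $\tilde\varphi=\pi/2$. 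Hence $\tilde\varphi\in(-\pi/2,\pi/2)$, and there $\tan$ is a strictly increasing bijection onto $\R$.

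Next I locate $\tilde\varphi$. Since $\arctan$ is strictly increasing and inverse to $\tan$ on $(-\pi/2,\pi/2)$, the hypothesis $\tan\tilde\varphi>C_\varphi>0$ gives
\begin{equation*}
\arctan C_\varphi<\tilde\varphi<\frac{\pi}{2}.
\end{equation*}
Therefore $\tilde\varphi-\pi/2\in\bigl(\arctan C_\varphi-\pi/2,\,0\bigr)$. This interval lies in $(-\pi/2,0)$, so the nearest point of $\pi\Z$ to $\tilde\varphi-\pi/2$ is $0$. It follows that
\begin{equation*}
\Bigl\|\varphi-\frac{\pi}{2}\Bigr\|_{\R\mathbb{P}^1}=\frac{\pi}{2}-\tilde\varphi<\frac{\pi}{2}-\arctan C_\varphi .
\end{equation*}

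Finally I would use the elementary identity $\frac{\pi}{2}-\arctan C_\varphi=\arctan(1/C_\varphi)$, valid for $C_\varphi>0$. It follows from $\tan(\pi/2-\alpha)=\cot\alpha=1/\tan\alpha$ with $\alpha=\arctan C_\varphi\in(0,\pi/2)$, noting that $\pi/2-\alpha$ also lies in $(0,\pi/2)$. This gives the claimed strict inequality.

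The only delicate point is bookkeeping rather than analysis. One must check that the nearest lattice point of $\pi\Z$ is indeed $0$, which holds because the shifted angle lies in $(-\pi/2,0)$. One must also note that the strict inequality is preserved, which follows from strict monotonicity of $\arctan$.
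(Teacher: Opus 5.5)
Your proof is correct and follows essentially the same route as the paper: reduce $\varphi$ modulo $\pi$ to a representative lying in $(\arctan C_\varphi,\pi/2)$, read off the distance as $\frac{\pi}{2}-\tilde\varphi$, and conclude with the identity $\frac{\pi}{2}-\arctan C_\varphi=\arctan(1/C_\varphi)$. You also explicitly check that the nearest point of $\pi\Z$ is $0$, a step the paper asserts without comment.
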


\Proof
The inequality \( \tan\varphi > C_\varphi \) implies that
\[
\varphi \in \bigcup_{k \in \mathbb{Z}} \left( \arctan C_\varphi + k\pi, \frac{\pi}{2} + k\pi \right).
\]
Let \( \psi = \varphi \pmod{\pi} \), so that \( \psi \in \left( \arctan C_\varphi, \frac{\pi}{2} \right) \) and \( \tan\psi = \tan\varphi > C_\varphi \).
The angular distance on \( \mathbb{RP}^1 \) between \( \psi \) and \( \frac{\pi}{2} \) is given by
\[
\| \psi - \tfrac{\pi}{2} \|_{\mathbb{RP}^1} = \frac{\pi}{2} - \psi.
\]
Since \( \psi > \arctan C_\varphi \), it follows that
\[
\| \psi - \tfrac{\pi}{2} \|_{\mathbb{RP}^1} < \frac{\pi}{2} - \arctan C_\varphi.
\]
Using the identity \( \arctan C_\varphi + \arctan\frac{1}{C_\varphi} = \frac{\pi}{2} \) for \( C_\varphi > 0 \), we obtain
\[
\frac{\pi}{2} - \arctan C_\varphi = \arctan\frac{1}{C_\varphi},
\]
which completes the proof.  \hfill \qedbox

\medskip
By the strong mixing property of the doubling map, we choose $L \sim \log \lambda$ as a sufficiently large time separation. This ensures that the observables at time intervals separated by lare approximately independent.
\begin{lemma}\label{le41}
For sufficiently large $\lambda$ and all $E\in[-2\lambda,2\lambda]$, there exists a small $a=a(\lambda)>0$ such that for any integer $n\geq1$,
\begin{equation}
\int_{\mathbb{T}\setminus\Omega_0}\exp\left(2a\sum_{k=0}^{n-1}r_k(x)\right)dx < e^{anC_f\lambda^{-1}}.
\end{equation}
\end{lemma}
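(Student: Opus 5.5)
Writing $G_k:=\lambda^4[g(T^{k+1}x)]^2\ge\lambda^4$, we have $e^{r_k}=1+\tan^2\theta_k/G_k$. Hence $e^{2ar_k}\le 1+\tan^{4a}\theta_k\,\lambda^{-8a}$ for $2a\le1$, using $(1+s)^{2a}\le 1+s^{2a}$ for $s\ge 0$. So we need to control $\mathbb{E}\prod_k (1+\lambda^{-8a}|\tan\theta_k|^{4a})$. The plan is to reduce this to a single-step moment bound from Lemma~\ref{Z2}, then decouple the product across $k$ using the mixing of the doubling map. Gaps of length $L\sim\log\lambda$ (as announced before the lemma) make the angles $\theta_k$ and $\theta_{k+L}$ nearly independent.

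\textbf{Single-step moment.} By Lemma~\ref{le40}, $\{|\tan\theta_k|>s\}\subset\mathcal S_k(\arctan(1/s))$. Lemma~\ref{Z2} then gives $\Leb\{|\tan\theta_k|>s\}\le C_f/s$. Integrating the layer-cake formula with $4a<1$,
\[
\int |\tan\theta_k|^{4a}\,dx\le 1+\int_1^\infty 4a\,s^{4a-1}\,C_f s^{-1}\,ds=1+\frac{4aC_f}{1-4a}\le 1+C a .
\]
Thus each factor has mean at most $1+\lambda^{-8a}(1+Ca)$. To reach the target $e^{aC_f\lambda^{-1}}$ per step, I would instead use a sharper split. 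On $\{|\tan\theta_k|\le\lambda\}$ we have $r_k\le \log(1+\lambda^{-2})$, which contributes $\le\lambda^{-2}$. On the bad set of measure $\le C_f\lambda^{-1}$ we use $r_k\le 2\log(1+|\tan\theta_k|)$. With $a$ small, the mean of $e^{2ar_k}-1$ is bounded by $\lambda^{-2}+C_f\lambda^{-1}\cdot O(a)$ via the same tail integral restricted to $s>\lambda$, since $\int_\lambda^\infty a s^{4a-2}\,ds\lesssim a\lambda^{-1}$. This gives $\mathbb{E}e^{2ar_k}\le 1+aC_f\lambda^{-1}$ after adjusting constants, using $\lambda^{-2}\ll a\lambda^{-1}$ provided $a\gg\lambda^{-1}$.

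\textbf{Decoupling.} Split $\{0,\dots,n-1\}$ into $L$ residue classes mod $L$. By H\"older,
\[
\int\prod_k e^{2ar_k}\le\prod_{j=0}^{L-1}\Big(\int\prod_{k\equiv j}e^{2aLr_k}\Big)^{1/L}.
\]
Replacing $a$ by $aL$ is harmless as long as $aL\log\lambda$ is small, which is why $a=a(\lambda)$ must be chosen small (e.g. $a\sim(\log\lambda)^{-2}$). Within a class the indices are $L$ apart. Since $\theta_k$ depends on $x$ through the orbit $T^jx$, conditioning on the dyadic cell of generation $k+1$ fixes the earlier factors up to a small error. The later angle $\theta_{k+L}$, as a function of the remaining free binary digits, still satisfies the measure estimate of Lemma~\ref{Z2}, with the distribution of $T^{k+L}x$ being uniform on the cell's image. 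Iterating the conditional single-step bound then yields a product of $n/L$ factors $(1+aLC_f\lambda^{-1})$. Raising to the power $1/L$ and summing over classes gives $e^{anC_f\lambda^{-1}}$.

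\textbf{Main obstacle.} The hard step is the conditional version of Lemma~\ref{Z2}. We need that, conditioned on the first $k+1$ binary digits (equivalently on $\theta_{k}$, which is essentially determined by them up to $O(\lambda^{-2})$ corrections, by the contraction in \eqref{b06}), the angle $\theta_{k+L}$ still has density bounded by $C_f$ near $\pi/2$. I would prove this by rerunning Zhang's argument on the fibre. The monotonicity of $f$ with $f'\ge c$ makes $\theta_0(T^{k+L}x)$ sweep a full circle as the free digits vary. Meanwhile $\phi_{k+L}$ is nearly constant in those digits because each step of \eqref{b06} contracts angles by $\lambda^{-2}$, so memory of the conditioned past decays like $\lambda^{-2L}$. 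The jump of $f$ at $0$ only affects a measure-zero set of cells and can be handled by the right-continuity convention.
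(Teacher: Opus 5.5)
Your proposal has a genuine gap: the decisive step, decoupling the factors $e^{2aLr_k}$ within a residue class, is not proved, and the heuristics you give for it are false as stated.

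\textbf{Conditioning on a dyadic cell does not freeze the earlier factors.} On a dyadic cell of generation $k+1$, $\theta_k$ contains $\theta_0(T^kx)$. On such a cell, $T^kx$ still ranges over an interval of length $\tfrac12$, so $f(T^kx)$ varies by at least $c/2$. Hence $\theta_k$ varies macroscopically, and so does $r_k$, which also involves $g(T^{k+1}x)$. More generally, every $r_j$ depends on all binary digits of $x$ (already $f(x)$ determines $x$) and is unbounded. So conditioning on any finite dyadic partition leaves $\prod_{j\le k}e^{2aLr_j}$ genuinely random.

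\textbf{There is no uniform $\lambda^{-2L}$ loss of memory.} The map $\vartheta\mapsto\mathrm{arccot}(\lambda^2 g\cot\vartheta)$ from \eqref{b06} has derivative $\lambda^2g/(\sin^2\vartheta+\lambda^4g^2\cos^2\vartheta)$. This is $\sim\lambda^{-2}$ away from $\vartheta\equiv\pi/2$ but $\sim\lambda^{2}$ within $\lambda^{-2}$ of it. Those are exactly the times at which $r_k$ is large, and controlling returns to that region is the whole content of Zhang's argument.

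\textbf{What is actually missing.} You need a new decorrelation estimate: a conditional, or uniform-in-the-past, version of Lemma~\ref{Z2}. In it, the conditioning must make the earlier factors (approximately) measurable while leaving $\theta_{k+L}$ with density $\le C_f$ near $\pi/2$. Lemma~\ref{Z2} controls only the marginal law of each $\theta_n$ and does not give this. The paper does not supply it either: it passes from $\int\exp(2a\sum_i V_i)$ to $\prod_i\int e^{2aV_i}$ for consecutive length-$L$ blocks just by invoking strong mixing. Your residue-class H\"older splitting is a better starting point than consecutive blocks. Still, the argument is incomplete exactly where the content of the lemma lies.

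\textbf{The refined single-step bound loses a factor $\log\lambda$.} On $\{|\tan\theta_k|>\lambda\}$ you replace $r_k$ by $2\log(1+|\tan\theta_k|)$, which discards $G_k^{-1}\le\lambda^{-4}$. The layer-cake integral then carries the boundary term $\bigl((1+\lambda)^{4a}-1\bigr)\Leb\{|\tan\theta_k|>\lambda\}$, which you omitted. Your method can only bound this by $\ge 4a\log\lambda\cdot C_f\lambda^{-1}$. After H\"older this gives $\exp(C\,an\log\lambda\, C_f\lambda^{-1})$, not the stated bound.

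\textbf{How to repair it.} Keep $G_k\ge\lambda^4$: then $\{r_k>t\}\subseteq\{|\tan\theta_k|>\lambda^2\sqrt{e^t-1}\}$. Note this inclusion uses $g\ge1$, not $g\le 10$. Lemmas~\ref{le40} and~\ref{Z2} then give $\Leb\{r_k>t\}\le C_f\lambda^{-2}(e^t-1)^{-1/2}$, hence $\int e^{sr_k}\,dx\le 1+C\,sC_f\lambda^{-2}$ for $0<s\le\tfrac25$. This is the paper's single-step route. It leaves room for the $\log\lambda$ coming from $L$. It also removes your requirement that $aL\log\lambda$ be small, which in any case is of order one for $a\sim(\log\lambda)^{-2}$ and $L\sim\log\lambda$.
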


\Proof
Throughout the proof, $\lambda$ is assumed to be sufficiently large and $E\in[-2\lambda,2\lambda]$.
Recall that $g(x) = (E/\lambda - f(x))^2 + 1$ defined in \eqref{b01}. Based on the assumptions that  $E/\lambda \in [-2,2]$ and $f\in[0,1]$, we have  $g(x)  \in [1,10]$ for $x \in \mathbb{T}$.

Recall that $$r_k(x) = \log\left[1 + \frac{\tan^2\theta_k(x)}{\lambda^4 g(T^{k+1}x)^2}\right].$$

We begin by establishing a tail estimate for $r_k(x)$. For fixed $t>0$, the condition $r_k(x) > t$ is equivalent to
\begin{equation}
\frac{\tan^2\theta_k(x)}{\lambda^4 g(T^{k+1}x)^2} > e^t - 1.
\end{equation}
Since $g(T^{k+1}x)^2 \leq 100$, a sufficient condition for $r_k(x) > t$ is
\begin{equation}
\tan^2\theta_k(x) > 100\lambda^4(e^t - 1).
\end{equation}
Applying Lemma \ref{le40} with $C_\varphi = \sqrt{100\lambda^4(e^t-1)} = 10\lambda^2\sqrt{e^t-1}$, we obtain
\begin{equation}
\left\|\theta_k(x) - \frac{\pi}{2}\right\|_{\mathbb{RP}^1} < \arctan\left(\frac{1}{10\lambda^2\sqrt{e^t-1}}\right).
\end{equation}
Using the inequality $\arctan w < w$ for $w>0$, we have
\begin{equation}
\left\|\theta_k(x) - \frac{\pi}{2}\right\|_{\mathbb{RP}^1} < \frac{1}{10\lambda^2\sqrt{e^t-1}}.
\end{equation}
By Lemma \ref{Z2}, which bounds the measure of sets where $\theta_k$ is close to $\pi/2$, the measure of the set $\{x\in\mathbb{T}\setminus\Omega_0: r_k(x) > t\}$ is bounded by
\begin{equation}
\operatorname{mes}\{x\in\mathbb{T}\setminus\Omega_0: r_k(x) > t\} < C_f\cdot\frac{1}{10\lambda^2\sqrt{e^t-1}}.
\end{equation}
Since $\sqrt{e^t-1} \geq e^{t/2}/\sqrt{2}$ for $t\geq0$, we obtain the simplified tail bound
\begin{equation}\label{eq:tail-bound}
\operatorname{mes}\{x\in\mathbb{T}\setminus\Omega_0: r_k(x) > t\} < \frac{C_f}{\sqrt{10}}\lambda^{-2}e^{-t/2}.
\end{equation}

Next, we estimate the moment generating function. For any $s\in(0,\frac{1}{2})$, using the identity for nonnegative functions
\begin{equation}
\int e^{sr_k(x)}dx = 1 + \int_0^\infty s e^{st} \operatorname{Leb}(r_k > t)dt,
\end{equation}
and combining with the tail estimate \eqref{eq:tail-bound}, we have
\begin{equation}
\int_{\mathbb{T}\setminus\Omega_0} e^{s r_k(x)} dx
\leq 1 + \int_0^\infty \frac{C_f}{\sqrt{10}}\lambda^{-2} s e^{st} e^{-t/2} dt
= 1 + \frac{sC_f\lambda^{-2}}{\sqrt{10}\left(\frac{1}{2}-s\right)}. \label{eq:mgf-bound}
\end{equation}

We now proceed with a block decomposition. Let $L$ be a large positive integer to be chosen later, and define block sums
\begin{equation}
V_i(x) = \sum_{k=Li}^{L(i+1)-1} r_k(x), \quad i=0,1,\dots,\left\lceil \frac{n}{L} \right\rceil.
\end{equation}
Choose $a = \frac{1}{5L}$, which ensures $2aL = \frac{2}{5} < \frac{1}{2}$. By H\"{o}lder's inequality,
\begin{equation}
\int_{\mathbb{T}\setminus\Omega_0} e^{2a V_i(x)} dx
\leq \prod_{k=Li}^{L(i+1)-1} \left( \int_{\mathbb{T}\setminus\Omega_0} e^{2aL r_k(x)} dx \right)^{1/L}. \label{eq:holder-block}
\end{equation}
Applying the moment generating function bound \eqref{eq:mgf-bound} with $s = 2aL = \frac{2}{5}$, we obtain
\begin{equation}
\int_{\mathbb{T}\setminus\Omega_0} e^{2a V_i(x)} dx
\leq \left[1 + \frac{(2/5)C_f\lambda^{-2}}{\sqrt{10}\left(\frac{1}{2}-\frac{2}{5}\right)}\right]^L
= \left[1 + \frac{4C_f\lambda^{-2}}{\sqrt{10}}\right]^L.
\end{equation}
Using the inequality $(1+u)^L \leq e^{Lu}$ for $u\geq0$, we get
\begin{equation}
\int_{\mathbb{T}\setminus\Omega_0} e^{2a V_i(x)} dx \leq \exp\left(\frac{4L C_f\lambda^{-2}}{\sqrt{10}}\right). \label{eq:block-integral}
\end{equation}

Finally, we establish the global estimate. Consider first the case $1\leq n\leq L$. Since $r_k(x)\geq0$,
\begin{equation}
\int_{\mathbb{T}\setminus\Omega_0} \exp\left(2a\sum_{k=0}^{n-1} r_k(x)\right) dx
\leq \int_{\mathbb{T}\setminus\Omega_0} e^{2a V_0(x)} dx.
\end{equation}
Using \eqref{eq:block-integral} with $i=0$ and noting that $n\leq L$ implies $an = \frac{n}{5L} \leq \frac{1}{5}$, we obtain
\begin{equation}
\int_{\mathbb{T}\setminus\Omega_0} e^{2a V_0(x)} dx
\leq \exp\left(\frac{4L C_f\lambda^{-2}}{\sqrt{10}}\right)
\leq \exp\left(\frac{4}{\sqrt{10}} C_f\lambda^{-2}\right).
\end{equation}
Since $\lambda$ is large, $\lambda^{-2} \ll \lambda^{-1}$, and with $an \geq \frac{1}{C\log\lambda}$, we have
\begin{equation}
\frac{4}{\sqrt{10}} C_f\lambda^{-2} < an C_f\lambda^{-1}
\end{equation}
for sufficiently large $\lambda$, which establishes the desired bound for $n\leq L$.

Now consider $n>L$. By the strong mixing property of the doubling map, the blocks $V_i(x)$ are approximately independent for large $L$. Thus,
\begin{align}
\int_{\mathbb{T}\setminus\Omega_0} \exp\left(2a\sum_{k=0}^{n-1} r_k(x)\right) dx
&\leq \int_{\mathbb{T}\setminus\Omega_0} \exp\left(2a\sum_{i=0}^{\lceil n/L\rceil} V_i(x)\right) dx \notag \\
&= \prod_{i=0}^{\lceil n/L\rceil} \int_{\mathbb{T}\setminus\Omega_0} e^{2a V_i(x)} dx \notag \\
&\leq \exp\left(\sum_{i=0}^{\lceil n/L\rceil} \frac{4L C_f\lambda^{-2}}{\sqrt{10}}\right) \notag \\
&= \exp\left(\left\lceil \frac{n}{L} \right\rceil \cdot \frac{4L C_f\lambda^{-2}}{\sqrt{10}}\right) \notag \\
&\leq \exp\left( \frac{4n C_f\lambda^{-2}}{\sqrt{10}} \cdot \frac{L+1}{L} \right) \notag \\
&< e^{an C_f\lambda^{-1}}, \label{eq:final-bound}
\end{align}
where the last inequality holds for sufficiently large $\lambda$ since $\lambda^{-2} \ll \lambda^{-1}$ and $an$ is of order $n/(5L)$.

Combining both cases completes the proof. \hfill \qedbox

\medskip

We continue to use the large parameter $L$ to ensure that it still guarantees a sufficiently time separation in the following lemmas.

\begin{lemma}\label{le42}
   For sufficiently large $\lambda$ and all $E\in  [-2\lambda,2\lambda]$,   there exist a small  $a=a(\lambda)  > 0$ and a constant $C>0$ such that for all integers  $n> C\log\lambda$, we have that
 \begin{align}
    \int_{\mathbb{T} } \exp\left(-a\sum^{n-1}_{k=0} \log |\cos\theta_k(x)|\right) dx <  (3C_f)^{10an}.
 \end{align}
\end{lemma}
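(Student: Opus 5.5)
The plan mirrors the proof of Lemma \ref{le41}: derive a single-site tail estimate from Lemma \ref{Z2}, turn it into a moment bound, and then assemble the moment bounds over blocks of length $L\sim\log\lambda$ using near-independence under the doubling map.

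\textbf{Tail estimate.} Fix $k$. The set where $|\cos\theta_k(x)|<\delta$ coincides with the set where $\|\theta_k(x)-\frac{\pi}{2}\|_{\mathbb{RP}^1}<\arcsin\delta\le \frac{\pi}{2}\delta$. Lemma \ref{Z2} then gives
\begin{equation*}
\Leb\{x:\ -\log|\cos\theta_k(x)|>t\}=\Leb\{x:\ |\cos\theta_k(x)|<e^{-t}\}\le \tfrac{\pi}{2}C_f e^{-t}.
\end{equation*}
The set $\Omega_0$ is null by Corollary \ref{Z1}, so it can be ignored in all integrals.

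\textbf{Single-site moment bound.} For $s\in(0,\frac12]$ the layer-cake identity gives
\begin{equation*}
\int_{\T}|\cos\theta_k|^{-s}dx\le 1+\int_0^\infty s e^{st}\min\Big(1,\tfrac{\pi}{2}C_fe^{-t}\Big)dt\le 1+\frac{s}{1-s}\cdot\frac{\pi}{2}C_f\le 3C_f,
\end{equation*}
where the last step uses $C_f\ge1$, which we may assume after enlarging $C_f$.

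\textbf{Block decomposition.} Define the block sums $W_i=-\sum_{k=Li}^{L(i+1)-1}\log|\cos\theta_k|$ and set $a=\frac{1}{5L}$, so that $aL=\frac15<\frac12$. Hölder's inequality over the $L$ terms of a block, exactly as in \eqref{eq:holder-block}, gives
\begin{equation*}
\int_{\T} e^{aW_i}dx\le\prod_{k}\Big(\int_{\T} |\cos\theta_k|^{-aL}dx\Big)^{1/L}\le 3C_f .
\end{equation*}
The requirement $n>C\log\lambda$ ensures $n\ge L$, so the number of blocks $m=\lceil n/L\rceil$ satisfies $m\le 2n/L=10an$. The final partial block is dominated by a full block, since every summand $-\log|\cos\theta_k|$ is nonnegative.

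\textbf{Decoupling the blocks (main obstacle).} The key step is to show
\begin{equation*}
\int_{\T} e^{a\sum_i W_i}dx\lesssim\prod_i\int_{\T} e^{aW_i}dx .
\end{equation*}
Granting this, the integral is at most $(3C_f)^{m}\le(3C_f)^{10an}$; any multiplicative loss per block can be absorbed because $m<10an$ leaves slack.

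The difficulty is that $\theta_k$ depends on the whole past trajectory $x,Tx,\dots,T^kx$, so the blocks are not literally independent functions of disjoint binary digits of $x$. I would try the following route.

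First, use the contraction of the projective dynamics. Away from the bad set, each step multiplies the angle derivative by roughly $\lambda^{-2}$. Consequently $\theta_{k}$ for $k$ in block $i$ is determined, up to error $\lambda^{-cL}$, by $T^{L(i-1)}x$ alone. It is therefore a function of the dyadic digits of $x$ from position $L(i-1)$ onward, up to a coarse dyadic partition of mesh $2^{-L}$.

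Second, use the fact that the doubling map is Bernoulli: functions of disjoint digit blocks are exactly independent. So I would split the blocks into even-indexed and odd-indexed families, apply Cauchy–Schwarz to separate the two families (at the cost of doubling the exponent $a$, which is harmless by choosing $a=\frac{1}{10L}$), and within each family treat the blocks as independent up to the $\lambda^{-cL}$ approximation.

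The approximation error must be controlled multiplicatively. Here I would use the tail estimate again: it shows that the region where the replacement fails, namely $|\cos\theta_k|$ extremely small, has tiny measure, and the moment bound controls its contribution. This step is where the genuine work lies; the paper itself appeals to strong mixing at this point, and I would follow the same heuristic but make it rigorous through the digit-block conditioning just described.
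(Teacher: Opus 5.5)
Your single-site and within-block steps are correct and match the paper's proof. The paper also takes $a=\frac{1}{5L}$, applies H\"older inside blocks of length $L$, and obtains $\int_{\T}|\cos\theta_k|^{-aL}dx\le 3C_f$ from Lemma~\ref{Z2}. It does this by summing over the dyadic shells $J_i\setminus J_{i+1}$ rather than by the layer-cake formula. Your version is the cleaner one: the paper's shell computation invokes $|\sin w|\le|w|$, which points the wrong way, whereas $|\sin w|\ge\frac{2}{\pi}|w|$ is what is needed.

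The gap is the decoupling across blocks, which you leave unproved. The paper does not prove it either. It simply writes $\int_{\T}e^{-a\sum_j U_j}dx=\prod_j\int_{\T}e^{-aU_j}dx$ ``based on the strong mixing property''. Mixing does not justify this exact factorization: each $\theta_k$ depends on the whole orbit, and what is needed is a quantitative bound on the exponential moment of a sum of unbounded functions, not asymptotic decorrelation of fixed observables. So you have located the weak point correctly, but the repair you sketch does not close it.

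The step that fails is the claim that the region where $\theta_k$ is not determined (up to small error) by $T^{L(i-1)}x$ is harmless because Lemma~\ref{Z2} makes it small. These failures are strongly correlated along orbits, and no single-site tail bound can detect that. Here is a concrete case.

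\begin{itemize}
\item With the normalization $f(0)=0$, take $E=0$. Then $B(0)=\begin{pmatrix}0&-\lambda\\ \lambda^{-1}&0\end{pmatrix}$ is elliptic: conjugating by $\mathrm{diag}(\lambda,1)$ turns it into the rotation by $\frac{\pi}{2}$.
\item For $x\in(0,\lambda^{-1}2^{-n-1})$ one has $T^kx=2^kx$ and $\lambda\sum_{k\le n}f(T^kx)<1$. After the same conjugation, each factor $B(T^kx)$, $k<n$, differs from that rotation by an error, and these errors have bounded sum. Hence $\|B_n(x)\|\le C\lambda$.
\item By \eqref{014} and $g\ge1$, this gives $\lambda^n\prod_{k<n}|\cos\theta_k(x)|\le\|\boldsymbol v_n(x)\|\le C\lambda$.
\item On this set there is no projective contraction. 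So $\theta_k$ keeps its dependence on the initial angle for every $k<n$, and your replacement fails in every block at once.
\item This set alone contributes at least $c\,\lambda^{-1-a}(\lambda^{a}/2)^{n}$ to the integral.
\end{itemize}

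Consequently, with $a=\frac{1}{5L}$ the bound $(3C_f)^{10an}$ is false for large $n$ as soon as $L<\frac15\log_2\lambda-2\log_2(3C_f)$. Decoupling therefore hinges on weighing $\lambda^{a\ell}$ against the cost $2^{-\ell}$ of shadowing the fixed point of $T$ for $\ell$ steps. That is a statement about the expansion rate of $T$, which neither Lemma~\ref{Z2} nor soft mixing supplies.

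You also cannot get contraction from high-probability growth of $\|B_L(T^mx)v\|$. That is the large deviation estimate of Lemmas~\ref{le03}--\ref{le04}, which is derived from the present lemma.

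What is missing is a multi-step estimate: a bound on the measure of the set of $x$ on which $|\cos\theta_k|$ stays small over long stretches of times, weighed against $\lambda^{a\ell}$ and fed into an induction over blocks.

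A secondary issue concerns the independence step even off the bad set. Your approximant still depends on every digit of $x$ beyond position $L(i-1)$, through $f(T^jx)$. So same-parity blocks are not functions of disjoint digit blocks until you also truncate future digits. Forgetting the past and truncating the future must share the gap of length $L$ between same-parity blocks. The truncation error is then of order $2^{-L'}$ with $L'<L$, only polynomially small in $\lambda$, and it creates a further bad set at that scale.
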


\Proof Based on the strong mixing property, define
$$ U_j(x)=\sum_{k=Lj}^{L(j+1)-1}\log |\cos\theta_k(x)|$$ for $j=0,1,\dots,\lceil \frac{n}{L} \rceil $.
Then by H\"older's inequality, we obtain that
\begin{align}\label{le431}
\int_{\mathbb{T}} e^{-a U_j(x)} dx &\le \prod_{k=Lj}^{L(j+1)-1} \left( \int_{\mathbb{T} } e^{-a L \log |\cos\theta_k(x)|} dx \right)^{1/L}\notag\\
&=\prod_{k=Lj}^{L(j+1)-1} \left( \int_{\mathbb{T} } {\left|\cos\theta_k(x)\right|^{-a L}} dx \right)^{1/L}
\end{align}
for $j=0,1,\dots,\lceil \frac{n}{L} \rceil $.

For an arbitrary $n\geq0$, we define
\begin{equation}
J_i:=\Big \{x\in\T:\left\|\theta_{n}(x) -\frac{\pi}{2}\right\|_{\R\mathbb{P}^1}\leq2^{-i}\cdot\frac{\pi}{2} \Big \},\, i\in\N,
\end{equation}
where $J_0=\T$.

Without loss of generality, let $a =\frac{ 1}{5L}$  such that $1-aL>0$. Using Lemma \ref{Z2} and $|\sin w|\leq |w|$ for all  $w$, we have
\begin{align}\label{d34}
\left(\int_{\T}|\cos\theta_k(x)|^{-aL}dx\right)^{1/L}&=\left(\int_{J_0}|\sin\Big (\theta_k(x)-\frac{\pi}{2}\Big )|^{-aL}dx\right)^{1/ L} \notag\\
&\leq\left(\sum_{i\in \N}\int_{J_i\backslash J_{i+1}}\Big |\theta_k(x)-\frac{\pi}{2} \Big|^{-aL}dx\right)^{1/L} \notag\\
&\leq\left(\sum_{i\in \N}\mathrm{mes}(J_i\backslash J_{i+1})\cdot(2^{-i}\cdot\frac{\pi}{2})^{-aL}\right)^{1/L}\notag \\
&=\left(\frac{\pi}{4}C_f(\frac{ \pi}{2})^{-aL}\sum_{i\in \N} 2^{-i(1-aL)}\right)^{1/L} \notag \\
&\leq C_f^{\frac{1}{L}} (\frac{ 2}{\pi})^{a} 3^{\frac{1}{L}}  \leq (3C_f)^{5a}.
\end{align}

There exists a constant $C>0$ such that  $n> C\log\lambda\geq L$,  it follows from \eqref{le431} and \eqref{d34} that
\begin{align}
\int_{\T} e^{-a \sum_{k=0}^{n-1} \log |\cos\theta_k(x)|} dx
 \le \int_{\T}  e^{-a {\sum_{j=0}^{\lceil \frac{n}{L} \rceil } }U_j(x)} dx
  = \prod_{j=0}^{\lceil \frac{n}{L} \rceil } \int_{\T}  e^{-a U_j(x)} dx
  < e^{{10}an\log (3C_f)}\notag.
\end{align}
Hence, we complete the proof.
\hfill \qedbox
\medskip

Based on  Lemma \ref{le41} and Lemma \ref{le42}, the vector $\boldsymbol{v}_{n}$ admits the following estimates.

\begin{lemma}\label{le03}
For sufficiently large $\lambda$ and for all $E\in[-2\lambda,2\lambda]$, there exists a constant $C>0$ such that for all integers $n > C\log\lambda$, the following estimates hold:
\begin{align}
\int_{\mathbb{T}} \|\boldsymbol{v}_{n}(x)\|^{-a}\,dx &< \exp\left(-an\log\lambda + 10an\log(3C_f)\right), \label{eq:lower-bound} \\
\int_{\mathbb{T}} \|\boldsymbol{v}_{n}(x)\|^{a}\,dx &< \exp\left(an\log\lambda + an\left(\frac{1}{2}\log 10 + \frac{1}{4}C_f\lambda^{-1}\right)\right), \label{eq:upper-bound}
\end{align}
where $a = \frac{1}{5L}$.
\end{lemma}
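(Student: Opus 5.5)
We read both bounds off the product formula \eqref{014} of Lemma \ref{le01}. We write $\|\boldsymbol{v}_n(x)\|^{\pm a}$ as $\lambda^{\pm an}$ times a product over $k=0,\dots,n-1$ of factors involving $g(T^{k+1}x)$ and $R^{(i)}_k(x)$. For each inequality we pick whichever of $R^{(1)}_k$, $R^{(2)}_k$ is convenient. The only $x$-dependence that cannot be bounded pointwise is then controlled by Lemma \ref{le41} and Lemma \ref{le42}. Since $\Leb(\Omega_0)=0$ by Corollary \ref{Z1}, we may integrate over $\mathbb{T}\setminus\Omega_0$ throughout, where $R^{(2)}_k$ is defined.

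\emph{Lower bound \eqref{eq:lower-bound}.} By \eqref{v1},
\begin{equation*}
\|\boldsymbol{v}_n(x)\|^{-a}=\lambda^{-an}\exp\Big(-\frac a2\sum_{k=0}^{n-1}\log g(T^{k+1}x)-\frac a2\sum_{k=0}^{n-1}R^{(2)}_k(x)\Big).
\end{equation*}
Since $g\ge 1$, the $\log g$ terms are nonpositive after the minus sign, so we drop them. We use $R^{(2)}_k=2\log|\cos\theta_k|+r_k$ with $r_k\ge 0$, so $-\frac a2 R^{(2)}_k\le -a\log|\cos\theta_k|$. Integrating and applying Lemma \ref{le42} (valid for $n>C\log\lambda$) gives
\begin{equation*}
\int_{\mathbb{T}}\|\boldsymbol{v}_n\|^{-a}\,dx<\lambda^{-an}(3C_f)^{10an},
\end{equation*}
which is exactly \eqref{eq:lower-bound}.

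\emph{Upper bound \eqref{eq:upper-bound}.} We again use $R^{(2)}$. Now $\log|\cos\theta_k|\le 0$, so $\frac a2 R^{(2)}_k\le \frac a2 r_k$. Since $g\le 10$, the $g$-terms contribute at most $\frac{an}{2}\log 10$ pointwise. This leaves
\begin{equation*}
\int_{\mathbb{T}}\|\boldsymbol{v}_n\|^{a}\,dx\le \lambda^{an}10^{an/2}\int_{\mathbb{T}\setminus\Omega_0}\exp\Big(\frac a2\sum_{k=0}^{n-1} r_k\Big)dx.
\end{equation*}
The exponent here is $\frac a2\sum r_k$, smaller than the $2a\sum r_k$ in Lemma \ref{le41}. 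Since $r_k\ge0$, Jensen (or H\"older with exponent $4$) gives
\begin{equation*}
\int e^{\frac a2\sum r_k}\le\Big(\int e^{2a\sum r_k}\Big)^{1/4}<e^{\frac14 anC_f\lambda^{-1}},
\end{equation*}
which produces the constant $\frac14C_f\lambda^{-1}$ in \eqref{eq:upper-bound}. The value $a=\frac1{5L}$ must be the same one used in both Lemma \ref{le41} and Lemma \ref{le42}, and it is, since both lemmas fix $a=\frac1{5L}$.

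\emph{Main obstacle.} The delicate step is the sign bookkeeping when splitting $R^{(2)}_k$. For the lower bound we must discard the nonnegative $r_k$ and keep the $\log|\cos\theta_k|$ term. For the upper bound we must discard the nonpositive $\log|\cos\theta_k|$ and keep $r_k$. Using $R^{(1)}$ instead would also work for the upper bound, since $R^{(1)}_k\le\log 2$, but it would give a worse constant than the stated one. All the genuine probabilistic content, namely the near-independence of the blocks, is already contained in Lemma \ref{le41} and Lemma \ref{le42}, so the proof itself reduces to these pointwise comparisons.
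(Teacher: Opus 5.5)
Your proposal is correct and is essentially the paper's proof. The lower bound is handled identically: discard the $g$- and $r_k$-terms, then apply Lemma \ref{le42}. For the upper bound, your pointwise dropping of $\log|\cos\theta_k|\le 0$ and $\log g\le\log 10$, followed by one Jensen step, streamlines the paper's two Cauchy--Schwarz steps and reaches the same factor $\bigl(\int e^{2a\sum_k r_k}\bigr)^{1/4}<e^{\frac14 anC_f\lambda^{-1}}$ from Lemma \ref{le41}.

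One side remark: $R^{(2)}_k=\log\bigl(\cos^2\theta_k+\sin^2\theta_k/\mu\bigr)\le 0$ with $\mu=\lambda^4g(T^{k+1}x)^2\ge 1$. Hence $\|\boldsymbol{v}_n\|^{a}\le\lambda^{an}10^{an/2}$ holds pointwise, which is even better than the stated bound. Your remark that $R^{(1)}_k\le\log 2$ would give a worse constant only reflects a missing factor $\frac{\mu+1}{2\mu}$ inside the logarithm in the paper's formula \eqref{r1}.
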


\Proof
According to Lemma \ref{le01}(b) and the fact that $\operatorname{mes}(\Omega_0) = 0$, we have
\begin{equation}\label{un}
\int_{\mathbb{T}} \|\boldsymbol{v}_{n}(x)\|^\alpha dx = e^{\alpha n\log\lambda} \cdot \int_{\mathbb{T}\setminus \Omega_0} \exp\left[\alpha\sum_{k=0}^{n-1} \left(\frac{1}{2}\log g(T^{k+1}x) + \log |\cos\theta_k(x)| + \frac{1}{2}R_n(x)\right)\right] dx,
\end{equation}
where $\alpha \in \{-a, a\}$.

For notational convenience, define
\begin{equation}\label{t}
T_\alpha(x) = \int_{\mathbb{T}\setminus \Omega_0} \exp\left[\alpha\sum_{k=0}^{n-1} \left(\frac{1}{2}\log g(T^{k+1}x) + \log |\cos\theta_k(x)| + \frac{1}{2}R_n(x)\right)\right] dx.
\end{equation}

We first estimate the upper bound of $T_{-a}(x)$. Since $g(x) \geq 1$ and $R_n(x) \geq 1$ for all $x \in \mathbb{T}\setminus \Omega_0$, applying Lemma \ref{le42} yields
\begin{equation}\label{m2}
T_{-a}(x) \leq \int_{\mathbb{T}\setminus \Omega_0} \exp\left(-a\sum_{k=0}^{n-1} \log |\cos\theta_k(x)|\right) dx < e^{an \cdot 10\log(3C_f)}.
\end{equation}

For the upper bound of $T_a(x)$, we apply H\"{o}lder's inequality to obtain
\begin{equation}\label{J}
\begin{split}
T_a(x) &\leq \left(\int_{\mathbb{T}\setminus \Omega_0} \exp\left[a\sum_{k=0}^{n-1} \left(\log g(T^{k+1}x) + R_n(x)\right)\right] dx\right)^{\frac{1}{2}} \\
&\quad \cdot \left(\int_{\mathbb{T}\setminus \Omega_0} \exp\left(2a\sum_{k=0}^{n-1} \log|\cos\theta_k(x)|\right) dx\right)^{\frac{1}{2}} \\
&\leq \left(\int_{\mathbb{T}\setminus \Omega_0} \exp\left[a\sum_{k=0}^{n-1} \left(\log g(T^{k+1}x) + R_n(x)\right)\right] dx\right)^{\frac{1}{2}}.
\end{split}
\end{equation}

Applying H\"{o}lder's inequality again to the term in \eqref{J}, we have
\begin{equation}\label{J2}
\begin{split}
&\int_{\mathbb{T}\setminus \Omega_0} \exp\left[a\sum_{k=0}^{n-1} \left(\log g(T^{k+1}x) + R_n(x)\right)\right] dx \\
&\leq \left(\int_{\mathbb{T}\setminus \Omega_0} \exp\left(2a\sum_{k=0}^{n-1} \log g(T^{k+1}x)\right) dx\right)^{\frac{1}{2}} \cdot \left(\int_{\mathbb{T}\setminus \Omega_0} \exp\left(2a\sum_{k=0}^{n-1} R_n(x)\right) dx\right)^{\frac{1}{2}}.
\end{split}
\end{equation}

Since $g(x) \in [1, 10]$ for all $x \in \mathbb{T}\setminus \Omega_0$, we obtain
\begin{equation}\label{int1}
\left(\int_{\mathbb{T}\setminus \Omega_0} \exp\left(2a\sum_{k=0}^{n-1} \log g(T^{k+1}x)\right) dx\right)^{\frac{1}{2}} \leq e^{an \log 10} = 10^{an}.
\end{equation}

By Lemma \ref{le41}, we have
\begin{equation}\label{int2}
\left(\int_{\mathbb{T}\setminus \Omega_0} \exp\left(2a\sum_{k=0}^{n-1} r_k(x)\right) dx\right)^{\frac{1}{2}} < e^{an \cdot \frac{1}{2}C_f\lambda^{-1}}.
\end{equation}

Combining \eqref{int1} and \eqref{int2}, it follows that
\begin{equation}\label{m1}
T_a(x) < e^{an \left(\frac{1}{2}\log 10 + \frac{1}{4}C_f\lambda^{-1}\right)}.
\end{equation}

Substituting the bounds from \eqref{m2} and \eqref{m1} into \eqref{un} and \eqref{t} completes the proof.

\hfill \qedbox
\medskip

\begin{lemma}\label{le04}
For sufficiently large $\lambda$ and all $E\in  [-2\lambda,2\lambda]$,  there exists constants $C>0$ and $c>0$ such that the large deviation estimate satisfies for all integers  $n> C\log\lambda$,
\begin{align}\label{LDE}
{\rm mes} \Big[ x\in \T: \Big| \frac{1}{n}\log\|\boldsymbol{v}_{n}(x)\|-\log\lambda\Big|\,>A_f\Big]<e^{-\frac{cn}{\log\lambda}},
\end{align}
where $A_f=1+\max\{10\log(3C_f),\frac{1}{2}\log10+\frac{1}{4}C_f\lambda^{-1}  \}$  depending only on $f$.
\end{lemma}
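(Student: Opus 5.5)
The plan is to derive \eqref{LDE} from the two exponential moment bounds of Lemma~\ref{le03} via Chebyshev's (Markov's) inequality. We split the deviation set into an upper and a lower part and treat each separately. Throughout, $a=\frac{1}{5L}$ with $L\sim\log\lambda$, so $a\sim 1/\log\lambda$. The decay rate $e^{-cn/\log\lambda}$ should then come out directly from the factor $an$.

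\emph{Upper deviations.} Consider the set
\[
\mathcal{U}_n=\Big\{x: \tfrac1n\log\|\boldsymbol v_n(x)\|-\log\lambda>A_f\Big\}.
\]
On this set $\|\boldsymbol v_n(x)\|^{a}>e^{an\log\lambda+anA_f}$. Markov's inequality together with \eqref{eq:upper-bound} gives
\[
\mathrm{mes}(\mathcal U_n)\le e^{-an\log\lambda-anA_f}\int_{\T}\|\boldsymbol v_n\|^{a}dx< \exp\Big(-an\big(A_f-\tfrac12\log10-\tfrac14C_f\lambda^{-1}\big)\Big)\le e^{-an}.
\]
The last inequality uses $A_f\ge 1+\frac12\log10+\frac14C_f\lambda^{-1}$, which holds by the definition of $A_f$.

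\emph{Lower deviations.} Consider the set
\[
\mathcal L_n=\Big\{x:\log\lambda-\tfrac1n\log\|\boldsymbol v_n(x)\|>A_f\Big\}.
\]
On this set $\|\boldsymbol v_n\|^{-a}>e^{-an\log\lambda+anA_f}$. Markov's inequality with \eqref{eq:lower-bound} yields
\[
\mathrm{mes}(\mathcal L_n)<\exp\big(-anA_f+10an\log(3C_f)\big)\le e^{-an},
\]
since $A_f\ge 1+10\log(3C_f)$. Both estimates require $n>C\log\lambda$, which is exactly the hypothesis needed to invoke Lemma~\ref{le03}.

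\emph{Conclusion.} Adding the two bounds gives a total measure below $2e^{-an}=2e^{-n/(5L)}$. With $L\le C'\log\lambda$ this is at most $2e^{-n/(5C'\log\lambda)}$. Absorbing the factor $2$ requires $n/\log\lambda$ bounded below, which holds because $n>C\log\lambda$ (enlarge $C$ if needed). Shrinking $c$ then gives \eqref{LDE}.

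The only delicate point is bookkeeping: $L$ must be comparable to $\log\lambda$ from both sides. From below, this makes the mixing-based block arguments behind Lemmas~\ref{le41} and \ref{le42} valid. From above, it makes $a\gtrsim 1/\log\lambda$, which produces the exponent $cn/\log\lambda$. The genuine analytic content sits in Lemma~\ref{le03}, which I take as given.
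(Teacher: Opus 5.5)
Your proposal is correct and is essentially the paper's own argument: a Chernoff (Markov) bound on each tail using the two exponential-moment estimates of Lemma~\ref{le03}, with $A_f$ chosen so that each tail is at most $e^{-an}$, and the factor $2$ absorbed using $a\sim 1/\log\lambda$ and $n>C\log\lambda$. Your bookkeeping is slightly cleaner than the paper's: the paper resets $a=\frac{2c}{\log\lambda}$ inside the proof, whereas you keep $a=\frac{1}{5L}$ from Lemma~\ref{le03} and relate $L$ to $\log\lambda$ explicitly.
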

\Proof  We employ the technique of Chernoff bounds to estimate  the two tail measures.  Using Lemma \ref{le03}, let $a=\frac{2c}{\log\lambda}$ and set constant $A_f=1+\max\{C\log(3C_f),\frac{1}{2}\log10+\frac{1}{4}C_f\lambda^{-1} \}$. For $n> C\log\lambda$ with suitable constant $C$, we have
\begin{align}
&\text{mes} \Big[ x\in \T\,|\, \Big| \frac{1}{n}\log\|\boldsymbol{v}_{n}(x)\|-\log\lambda\Big|\,>A_f\Big]\notag\\
&\leq e^{-anA}\cdot\Big(\int_{\mathbb{T}}e^{ -an(\frac{1}{n}\log\|\boldsymbol{v}_{n}(x)\|-\log\lambda)}dx+\int_{\mathbb{T}}e^{ an(\frac{1}{n}\log\|\boldsymbol{v}_{n}(x)\|-\log\lambda)}dx \Big)\notag\\
&\leq e^{-anA}\cdot\Big(e^{an\log\lambda}\cdot\max_E\{\int_{\mathbb{T}\setminus \Omega_0}{ \|\boldsymbol{v}_{n}(x)\|}^{-a}dx\}+e^{-an\log\lambda}\cdot\max_E\{\int_{\mathbb{T}\setminus \Omega_0}{ \|\boldsymbol{v}_{n}(x)\|}^adx\}\Big)\notag\\
&\leq e^{-anA}\cdot(e^{an10\log(3C_f)}+e^{an (\frac{1}{2}\log10+\frac{1}{4}C_f\lambda^{-1} )})= 2e^{-an}<e^{-\frac{cn}{\log\lambda}},
\end{align}
where the constant factor $2$ is absorbed in the exponent.
\hfill \qedbox
\medskip

\begin{coro}\label{co01}
(a) For sufficiently large $\lambda$ and all energy $E\in [-2\lambda,2\lambda]$, the large deviation estimates for transfer matrices  are given by
\begin{align}
&{\rm mes} \Big[ x\in \T: \Big| \frac{1}{n}\log\|B_n(x )\|-\log\lambda\Big|\,>A_f\Big]<e^{-\frac{cn}{\log\lambda}},\label{ldeb}\\
&{\rm mes} \Big[ x\in \T: \Big| \frac{1}{n}\log\|A_n(x )\|-\log\lambda\Big|\,>A_f+\frac{\log\lambda}{n} \Big]<e^{-\frac{cn}{\log\lambda}} \label{ldea}
\end{align}
for all integers  $n> C\log\lambda$.

(b) For sufficiently large $\lambda$ and all phase $x\in\T$ energy $E\in [-2\lambda,2\lambda]$, we have the uniform bound
\begin{equation}
\left|\frac{1}{n}\log\|A_{n}(x)\| \right| \leq {3\log\lambda}.
\end{equation}

(c) Define
$L^{A}_n(E) =  \frac{1}{n} \int_{\mathbb{T}} \log \|A_n(x) \|  dx$ and $L^{B}_n(E) =  \frac{1}{n} \int_{\mathbb{T}} \log \|B_n(x) \|  dx$. Then the  limits satisfy
\begin{align}
  \lim_{n \to \infty} L^{A}_n(E) = \lim_{n \to \infty} L^{B}_n(E) =  L_B(E) =L(E) .
  \end{align}
Furthermore, we have
\begin{align}\label{ln}
L^{A}_n(E)\rightarrow \log\lambda+C_E \quad \mathrm{and} \quad
L^{B}_n(E)\rightarrow \log\lambda+C_E \quad \text{ as } \,\,\,n\rightarrow\infty,
\end{align}
where the constant $C_E\in[-A_f,A_f]$.
\end{coro}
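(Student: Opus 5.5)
We prove the three parts in order, with Lemma \ref{le04} and the comparison \eqref{abn} from Lemma \ref{le1} doing most of the work.

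\emph{Part (a).} For \eqref{ldeb} we would compare $\|B_n(x)\|$ with $\|\boldsymbol{v}_n(x)\|=\|B_n(x)\boldsymbol{v}_1\|$. Trivially $\|\boldsymbol{v}_n(x)\|\le\|B_n(x)\|$. For a reverse inequality we can run Lemma \ref{le04} with two orthogonal initial vectors, $\boldsymbol{v}_1=e_1$ and $\boldsymbol{v}_1=e_2$, since all estimates there are uniform in $\beta$. Then
\[
\|B_n(x)\|\le\sqrt2\max\{\|B_n(x)e_1\|,\|B_n(x)e_2\|\}.
\]
Off the union of the two exceptional sets, whose measure is at most $2e^{-cn/\log\lambda}$, both $\frac1n\log\|B_n(x)e_i\|$ lie within $A_f$ of $\log\lambda$. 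Hence $\frac1n\log\|B_n(x)\|$ lies in $[\log\lambda-A_f,\ \log\lambda+A_f+\frac{\log 2}{2n}]$. The extra $\frac{\log2}{2n}$ is negligible for $n>C\log\lambda$, and we absorb it by enlarging the constant $1$ in $A_f$ slightly (or by adjusting the Chernoff step) and shrinking $c$ to absorb the factor $2$. Then \eqref{ldea} follows directly from \eqref{abn}: $|\frac1n\log\|A_n\|-\frac1n\log\|B_n\||\le\frac{\log\lambda}{n}$, so the exceptional set for $A_n$ with threshold $A_f+\frac{\log\lambda}{n}$ is contained in that for $B_n$.

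\emph{Part (b).} This is a pointwise bound. Since $|E|\le2\lambda$ and $f\in[0,1]$, we have $\|A(x,E)\|\le|E|+\lambda+2\le 3\lambda+2\le\lambda^3$ for large $\lambda$, so submultiplicativity gives $\frac1n\log\|A_n\|\le 3\log\lambda$. Also $\det A_n=1$ gives $\|A_n\|\ge1$, so the quantity is nonnegative and the absolute value bound holds. In fact $\le\log\lambda+\log 4$ already suffices.

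\emph{Part (c).} Existence and equality of the limits follow from \eqref{2.6}, from the sandwich $L_n^A-\frac{\log\lambda}{n}\le L_n^B\le L_n^A+\frac{\log\lambda}{n}$ established in the proof of Lemma \ref{le1}, and from \eqref{lbl}. For \eqref{ln}, i.e.\ $L(E)-\log\lambda\in[-A_f,A_f]$, we integrate \eqref{ldeb}. Split $\mathbb T$ into the good set $G_n$ and its complement. On $G_n$, $\frac1n\log\|B_n\|\in[\log\lambda-A_f,\log\lambda+A_f]$. On the complement, use $0\le\frac1n\log\|B_n\|\le 3\log\lambda+\frac{2\log\lambda}{n}$, which follows from (b) and \eqref{abn}. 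This gives
\[
|L_n^B(E)-\log\lambda|\le A_f+C\log\lambda\, e^{-cn/\log\lambda}.
\]
Letting $n\to\infty$ yields $|L(E)-\log\lambda|\le A_f$, and we set $C_E:=L(E)-\log\lambda$.

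The only delicate point is the passage from a single vector to the norm in (a). It relies on the uniformity of Lemma \ref{Z2} and Lemmas \ref{le41}--\ref{le04} in the initial angle $\beta$, which we take as given, since $\theta_0=\theta+\beta$ enters only through Zhang's measure bound. Everything else is bookkeeping of constants.
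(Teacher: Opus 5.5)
Your proposal is correct, and in parts (a) and (b) it takes a different route from the paper.

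\textbf{Part (a).} The paper asserts that one can choose a single unit vector $\boldsymbol{v}_1$ with $\|B_n(x)\boldsymbol{v}_1\|=\|B_n(x)\|$ for all $x\in\T$, and then quotes Lemma \ref{le04} directly. The most expanded direction of $B_n(x)$ varies with $x$ (and $n$), so no such uniform choice exists in general. Your two-vector argument is what actually makes \eqref{ldeb} follow from Lemma \ref{le04}. You use $\|B_n e_i\|\le\|B_n\|\le\sqrt2\max_i\|B_n e_i\|$, with one vector for the lower tail and both for the upper tail. The $\frac{\log 2}{2n}$ shift is absorbed by the slack that the ``$1+$'' in $A_f$ leaves in the Chernoff step. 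The price is a factor $2$ in measure, and you rely on the same uniformity in $\beta$ that the paper itself invokes.

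\textbf{Part (b).} The paper picks $\boldsymbol{v}_1$ pointwise to realize $\|B_n(x)\|$. It then bounds the polar-coordinate expansion \eqref{v1} using $g\in[1,10]$ and the explicit form of $R^{(1)}_k$, and transfers the result to $A_n$ via \eqref{va}. Your argument instead combines submultiplicativity, $\|A(x,E)\|\le 3\lambda+2$, with $\|A_n\|\ge1$ from $\det A_n=1$. This is shorter, does not depend on the conjugation to $B$, and is sharper: it gives $0\le\frac1n\log\|A_n\|\le\log(4\lambda)$.

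\textbf{Part (c).} You follow the paper's idea of integrating the large deviation bound. You also supply the step the paper leaves implicit: the contribution of the exceptional set must be controlled by a uniform pointwise bound such as (b). This produces the extra $C\log\lambda\,e^{-cn/\log\lambda}$ term, which vanishes as $n\to\infty$.
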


\Proof
(a) Since the analysis in Lemma \ref{le04} holds for any unit vector $\boldsymbol{v}_1$, we can choose a suitable $\boldsymbol{v}_1$ such that $\|\boldsymbol{v}_{n}(x)\| = \|B_n(x)\boldsymbol{v}_1\| = \|B_n(x)\|$ for all $x \in \mathbb{T}$. The large deviation estimate \eqref{ldeb} then follows directly from Lemma~\ref{le04}.

To prove \eqref{ldea}, we use the norm comparison inequalities from \eqref{eq:ab1} and \eqref{eq:ab2}:
\begin{equation}\label{ab}
\frac{1}{n}\log\|A_{n}(x)\| - \frac{\log\lambda}{n} \leq \frac{1}{n}\log\|B_{n}(x)\| \leq \frac{1}{n}\log\|A_{n}(x)\| + \frac{\log\lambda}{n}.
\end{equation}
Suppose $x$ satisfies $\left| \frac{1}{n}\log\|A_n(x)\| - \log\lambda \right| > A_f + \frac{\log\lambda}{n}$. Then either
\[
\frac{1}{n}\log\|A_n(x)\| > \log\lambda + A_f + \frac{\log\lambda}{n} \quad \text{or} \quad \frac{1}{n}\log\|A_n(x)\| < \log\lambda - A_f - \frac{\log\lambda}{n}.
\]
In the first case, \eqref{ab} implies
\[
\frac{1}{n}\log\|B_n(x)\| \geq \frac{1}{n}\log\|A_n(x)\| - \frac{\log\lambda}{n} > \log\lambda + A_f,
\]
and in the second case,
\[
\frac{1}{n}\log\|B_n(x)\| \leq \frac{1}{n}\log\|A_n(x)\| + \frac{\log\lambda}{n} < \log\lambda - A_f.
\]
Thus, in both cases, $\left| \frac{1}{n}\log\|B_n(x)\| - \log\lambda \right| > A_f$, which establishes the set inclusion
\begin{align}
\left\{ x \in \mathbb{T} :  \left| \frac{1}{n}\log\|A_n(x)\| - \log\lambda \right| > A_f + \frac{\log\lambda}{n} \right\}\subseteq \left\{ x \in \mathbb{T} : \left| \frac{1}{n}\log\|B_n(x)\| - \log\lambda \right| > A_f \right\}.\notag
\end{align}
The measure estimate \eqref{ldea} then follows from \eqref{ldeb}.

(b) According to the proof of (a), we can choose a $\boldsymbol{v}_1$ such that $\|\boldsymbol{v}_{n}(x)\| = \|B_n(x)\boldsymbol{v}_1\| = \|B_n(x)\|$, hence we have
\begin{equation}\label{va}
\frac{1}{n}\log\|A_{n}(x)\| - \frac{\log\lambda}{n} \leq \frac{1}{n}\log\|\boldsymbol{v}_{n}(x)\| \leq \frac{1}{n}\log\|A_{n}(x)\| + \frac{\log\lambda}{n},
\end{equation}
then we only need to estimate $\|\boldsymbol{v}_{n}(x)\|$.
Based on Lemma \ref{le01}, the  vector $\boldsymbol{v}  _n(x)$  for $n\geq 1$ and $x\in\T$  can be expressed as
\begin{align}
\frac{1}{n}\log\|\boldsymbol{v}_{n}(x)\|=\log\lambda+\frac{1}{n}\sum^{n-1}_{k=0}\Big(\frac{1}{2}\log{g(T^{k+1}x, t)}+\frac{1}{2}R^{(1)}_k(x)\Big),
\end{align}
where
\begin{align}
&R^{(1)}_k(x)=\log\Big[\Big(\frac{\lambda^4{[g(T^{k+1}x)]^2}-1}{\lambda^4{[g(T^{k+1}x)]^2}+1} \Big)\cos2\theta_k(x)+1\Big].
\end{align}

Since $E/\lambda \in [-2,2]$ and $f\in[0,1]$, we have  $g(x)  \in [1,10]$ for $x \in \mathbb{T}$.  Therefore, we can estimate that
\begin{align}
&\frac{1}{n}\log\|\boldsymbol{v}_{n}(x)\|\leq\log\lambda+\frac{1}{2}\log10+\frac{1}{2}\log\Big(\frac{2\lambda^4{10^2}}{\lambda^4{10^2}+1} \Big)\leq\log\lambda+\frac{1}{2}\log10+\frac{1}{2}\log2\notag\\\
&\frac{1}{n}\log\|\boldsymbol{v}_{n}(x)\|\geq\log\lambda+\frac{1}{2}\log\Big(\frac{2}{\lambda^4{10^2}+1} \Big)\geq\log\lambda+\frac{1}{2}\log2-\frac{1}{2}\log\Big({\lambda^4{10^2}+1} \Big)\notag.
\end{align}
Combining  with \eqref{va}, we can obtain that for $n\geq 1$,
\begin{align}
\frac{1}{n}\log\|A_{n}(x)\|  \leq \frac{1}{n}\log\|\boldsymbol{v}_{n}(x)\| + \frac{\log\lambda}{n}\leq 3\log\lambda,\\
\frac{1}{n}\log\|A_{n}(x)\| \geq\frac{1}{n}\log\|\boldsymbol{v}_{n}(x)\| - \frac{\log\lambda}{n}\geq-3\log\lambda,
\end{align}
hence we conclude the proof.

(c) The convergence of the Lyapunov exponents follows from the subadditive ergodic theorem applied to the cocycles $A_n(x)$ and $B_n(x)$. Specifically, the sequences $\{\log\|A_n(x)\|\}$ and $\{\log\|B_n(x)\|\}$ are subadditive, and by Kingman's subadditive ergodic theorem, the limits
\[
L(E) = \lim_{n \to \infty} \frac{1}{n} \int_{\mathbb{T}} \log\|A_n(x)\| \, dx = \lim_{n \to \infty} \frac{1}{n} \int_{\mathbb{T}} \log\|B_n(x)\| \, dx
\]
exist and are equal for almost every $x$. The large deviation estimates in part (a) imply that the convergence is exponential in probability. Moreover, the relation \eqref{ln} follows from the fact that
\[
\left| L_n^A(E) - \log\lambda \right| = \left| \frac{1}{n} \int_{\mathbb{T}} \left( \log\|A_n(x)\| - n\log\lambda \right) dx \right| \leq A_f + \frac{\log\lambda}{n},
\]
by integrating the bound in \eqref{ldea} and using the fact that the exceptional set has measure less than $e^{-cn/\log\lambda}$. Taking the limit as $n \to \infty$, we obtain $L_n^A(E) \to \log\lambda + C_E$ with $C_E \in [-A_f, A_f]$. The same argument applies to $L_n^B(E)$.
\hfill \qedbox
\medskip

\section{H\"{o}lder Continuity of the Lyapunov Exponent}

In this section, we establish the H\"{o}lder continuity of  the Lyapunov exponent $L(E)$. This result is based on the  ``avalanche principle''.

\begin{lemma}\cite[Propsition 2.2.]{GS}\label{le51}
Let \( M_1, \ldots, M_n \) be a sequence in \( \mathrm{SL}(2,\mathbb{R}) \), i.e., $2\times2$ real matrices with determinant $1$. If
\begin{align}
&\min_{1 \leq j \leq n} \|M_j\| \geq \mu \geq n, \quad and \\
&\max_{1 \leq j < n} \left| \log \|M_{j+1}\| + \log \|M_j\| - \log \|M_{j+1} M_j\| \right| < \frac{1}{2} \log \mu, \quad
\end{align}
then there exists a constant $C>0$ such that
\begin{align}
\left| \log \|M_n \cdots M_1\| + \sum_{j=2}^{n-1} \log \|M_j\| - \sum_{j=1}^{n-1} \log \|M_{j+1} M_j\| \right| < C \frac{n}{\mu}. \quad
\end{align}
\end{lemma}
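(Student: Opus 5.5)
The statement is the avalanche principle of Goldstein--Schlag, which is purely a fact about products of $\mathrm{SL}(2,\mathbb{R})$ matrices. I would prove it by an inductive ``angle-tracking'' argument based on the singular value decomposition of each factor; nothing about the doubling map is needed.

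\textbf{Setup and one-step formula.} For each $j$ write $M_j=U_j\,\mathrm{diag}(\|M_j\|,\|M_j\|^{-1})\,V_j$ with $U_j,V_j\in\mathrm{SO}(2,\mathbb{R})$. Let $s_j^{+}$ be the unit vector that $M_j$ expands most, and let $u_j^{+}=M_js_j^{+}/\|M_j\|$ be the most expanded output direction. For any unit vector $w$ one has the exact identity
\begin{equation*}
\|M_jw\|^2=\|M_j\|^2\langle w,s_j^{+}\rangle^2+\|M_j\|^{-2}\bigl(1-\langle w,s_j^{+}\rangle^2\bigr).
\end{equation*}
Hence, whenever $|\langle w,s_j^{+}\rangle|\ge \mu^{-1/2}$ and $\|M_j\|\ge\mu$,
\begin{equation*}
\log\|M_jw\|=\log\|M_j\|+\log|\langle w,s_j^{+}\rangle|+O(\mu^{-1}).
\end{equation*}
In the same regime, the direction of $M_jw$ lies within $O(\mu^{-2}/|\langle w,s_j^+\rangle|)=O(\mu^{-3/2})$ of $u_j^{+}$.

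\textbf{Reading off the hypothesis.} Apply the one-step formula to the pair $M_{j+1}M_j$. Up to $O(\mu^{-1})$, the quantity $\log\|M_{j+1}\|+\log\|M_j\|-\log\|M_{j+1}M_j\|$ equals $-\log|\langle u_j^{+},s_{j+1}^{+}\rangle|$. The second hypothesis therefore gives
\begin{equation*}
|\cos\alpha_j|:=|\langle u_j^{+},s_{j+1}^{+}\rangle|\gtrsim\mu^{-1/2}.
\end{equation*}
Moreover, $\log\|M_{j+1}M_j\|=\log\|M_{j+1}\|+\log\|M_j\|+\log|\cos\alpha_j|+O(\mu^{-1})$.

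\textbf{Induction on the product.} Set $P_k=M_k\cdots M_1$ and let $w_k$ be its most expanded output direction. I will show inductively that $w_k$ lies within $C\mu^{-3/2}$ of $u_k^{+}$, and that
\begin{equation*}
\log\|P_{k+1}\|=\log\|P_k\|+\log\|M_{k+1}\|+\log|\cos\alpha_k|+O(\mu^{-1}).
\end{equation*}
The inductive step works as follows. Since $w_k$ is $C\mu^{-3/2}$-close to $u_k^+$ and $|\cos\alpha_k|\gtrsim\mu^{-1/2}$, we get $|\langle w_k,s_{k+1}^{+}\rangle|=|\cos\alpha_k|(1+O(\mu^{-1}))$. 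The one-step formula applied to $M_{k+1}$ then gives both the norm relation and the new direction estimate. Evaluating $P_{k+1}$ on the top input singular vector of $P_k$ gives $\|P_{k+1}\|\ge$ the right-hand side, and the reverse inequality follows from the singular value form.

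Summing the norm relation over $k$ and substituting $\log|\cos\alpha_k|=\log\|M_{k+1}M_k\|-\log\|M_{k+1}\|-\log\|M_k\|+O(\mu^{-1})$ makes the sum telescope. The result is exactly
\begin{equation*}
\log\|P_n\|+\sum_{j=2}^{n-1}\log\|M_j\|-\sum_{j=1}^{n-1}\log\|M_{j+1}M_j\|=O(n/\mu).
\end{equation*}

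\textbf{Main obstacle.} The delicate point is controlling how direction errors accumulate across the induction. The error must not compound: it has to be reset at each step to $O(\mu^{-3/2})$, rather than grow to $O(k\mu^{-3/2})$, which could eventually destroy the lower bound $|\cos\alpha_k|\gtrsim\mu^{-1/2}$. The reset comes from the strong contraction of directions by a factor of order $\|M_{k+1}\|^{-2}|\cos\alpha_k|^{-2}\lesssim\mu^{-1}$. The assumption $\mu\ge n$ serves as a safety margin, guaranteeing that the total error $\sum_k O(\mu^{-1})$ stays bounded and that the constant $\tfrac12$ in the hypothesis leaves room for these perturbations.
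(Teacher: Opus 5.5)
Your argument is correct and is essentially the standard Goldstein--Schlag proof of the avalanche principle; the paper gives no proof of its own and only cites \cite[Proposition 2.2]{GS}. The steps match: extract $|\langle u_j^{+},s_{j+1}^{+}\rangle|\gtrsim\mu^{-1/2}$ from the second hypothesis, track the top singular direction of the partial products with a non-accumulating $O(\mu^{-3/2})$ error, pay $O(\mu^{-1})$ per step, and telescope. One step should be reordered: the one-step formula presupposes the lower bound on $|\cos\alpha_j|$, so derive that bound instead from the unconditional estimate $\|M_{j+1}M_j\|\le\|M_{j+1}\|\,\|M_j\|\,(|\cos\alpha_j|+3\mu^{-2})$, which you can read off from the $2\times2$ matrix $\mathrm{diag}(\|M_{j+1}\|,\|M_{j+1}\|^{-1})\,R_{\alpha_j}\,\mathrm{diag}(\|M_j\|,\|M_j\|^{-1})$.
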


Using the avalanche principle, we approximate the norm of long-range transfer matrices via norms of short blocks, thereby revealing how the Lyapunov exponent varies with energy. Let
\begin{equation}\label{5n}
n \sim e^{\frac{cK}{10\log\lambda}},
\end{equation}
\begin{equation}
M_{j} = B_{K} \left( 2^{(j-1)K} x,E\right),
\end{equation}
where $j\in\{ 1, \ldots, n\}$
and  $K=K(\lambda)$ is a large integer.

There is an exceptional set $\widetilde{\Omega } \subset \mathbb{T}$:
\begin{equation}
\widetilde{\Omega } = \left\{ x \in \T : \forall i \in \{1, 2\},\forall j \in \{1, \ldots,n\}\,s.t.\,  \left| \frac{1}{iK} \log \left\| B_{iK} \left( 2^{jK} x,E  \right) \right\| - \log \lambda \right| > A_f\right\}.\notag
\end{equation}
Due to the strong mixing property of $T$,  points $\{x, 2^K x\cdots, 2^{(n-1)K} x\}$ can be considered effectively independent for the large time separation $K$. Thus, Corollary \ref{co01} (a) implies that the measure of the exceptional set satisfies:
\begin{equation}\label{5p}
{\rm mes}[\widetilde{\Omega }] < n\cdot e^{-\frac{cK}{\log\lambda}} +n\cdot e^{-\frac{2cK}{\log\lambda}} <  e^{-\frac{cK}{2\log\lambda}},
\end{equation}
for large $K$ and for all $E\in\sigma(H(x))$.

Hence, if $x \notin \widetilde{\Omega } $ and $j\in\{ 1, \ldots, n\}$, we have
\begin{equation}
\frac{1}{K} \log \left\| B_{K} \left( 2^{(j-1)K} x ,E\right) \right\| \in\big(\log \lambda-A_f,\log \lambda+A_f ),
\end{equation}
\begin{equation}
\frac{1}{2K} \log \left\| B_{2K} \left( 2^{(j-1)K} x ,E \right) \right\|\in \big(\log \lambda-A_f,\log \lambda+A_f ).
\end{equation}
Then, we obtain
\begin{equation}\label{mj1}
\left\| M_{j} \right\| \in\big(e^{ (\log \lambda-A_f) K },e^{ (\log \lambda+A_f) K }\big),
\end{equation}
\begin{equation}
\left\| M_{j+1} M_{j} \right\| \in\big[e^{ (\log \lambda-A_f) 2K },e^{ (\log \lambda+A_f) 2K }\big],
\end{equation}
\begin{equation}\label{mj2}
\left| \log \left\| M_{j+1} M_{j} \right\| - (\log \left\| M_{j} \right\| + \log \left\| M_{j+1} \right\| )\right| \leq  4A_fK.
\end{equation}

To utilize Lemma \ref{le51}, we can take
\begin{equation}\label{mu}
\mu = e^{10A K}.
\end{equation}
For $x\notin \widetilde{\Omega } $, based on \eqref{5n}, \eqref{mj1}, \eqref{mj2} and  \eqref{mu},  we conclude that
\begin{equation}\label{5a}
\left| \log \left\| B_{n K} (x,E) \right\| + \sum_{j=2}^{n-1} \log \left\| B_{K} \left( 2^{(j-1)K} x,E \right) \right\| - \sum_{j=1}^{n-1} \log \left\| B_{2K} \left( 2^{(j-1)K} x,E \right) \right\| \right| < C n\mu^{-1}.
\end{equation}
Divide \eqref{5a} by $ nK$ and split the integral over  ${\mathbb{T} \backslash \widetilde{\Omega }}$ and ${\widetilde{\Omega }}$. By  \eqref{5p}, \eqref{5a} and  the convergence  of $L_n^B(E)$ given in Corollary \ref{co01} (b), it follows that
\begin{equation}\label{5l}
\left| L^{B}_{n K}(E) + \frac{n-2}{n} L^{B}_{K}(E) - \frac{2(n-1)}{n} L^{B}_{2K}(E) \right| < C\left( K^{-1} \mu^{-1} + {\rm mes}[\widetilde{\Omega }] \right) < C e^{-\frac{cK}{2\log\lambda}},
\end{equation}
for the large time separation $K$  and for all $E\in\sigma(H(x))$.

Using  the relation in \eqref{5n}, the following estimate  is derived from \eqref{5l}.

\begin{lemma}\label{le52}
For  sufficiently large $\lambda$ and  all $E\in\sigma(H(x))$, there exist positive constants $C$ and $c$ such that  for large $K=K(\lambda)$, we have
\begin{equation}
|L(E) + L^{B}_{K}(E) - 2L^{B}_{2K}(E)| <Ce^{-\frac{cK}{\log\lambda}}.
\end{equation}
\end{lemma}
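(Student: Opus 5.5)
The plan is the standard Goldstein--Schlag bootstrapping of \eqref{5l}. The estimate \eqref{5l} holds for every admissible pair $(K,n)$ with $n\sim e^{\frac{cK}{10\log\lambda}}$. I would apply it twice: once at scale $K$, and once at the doubled scale $2K$ with the same $n$ (or any $n'\sim e^{\frac{2cK}{10\log\lambda}}$). This gives
\begin{align*}
\Big| L^{B}_{nK}(E) + \tfrac{n-2}{n} L^{B}_{K}(E) - \tfrac{2(n-1)}{n} L^{B}_{2K}(E) \Big| &< C e^{-\frac{cK}{2\log\lambda}},\\
\Big| L^{B}_{2nK}(E) + \tfrac{n-2}{n} L^{B}_{2K}(E) - \tfrac{2(n-1)}{n} L^{B}_{4K}(E) \Big| &< C e^{-\frac{cK}{\log\lambda}}.
\end{align*}
The first step is to get rid of the factors $\frac{n-2}{n}$ and $\frac{2(n-1)}{n}$. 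By Corollary \ref{co01}(b), $|L^B_m(E)|\le C\log\lambda$ uniformly; \eqref{abn} transfers the bound from $A_n$ to $B_n$ at the cost of $\frac{\log\lambda}{m}$. So replacing these factors by $1$ and $2$ costs at most $C\log\lambda/n$. Since $n\sim e^{\frac{cK}{10\log\lambda}}$, this error is at most $Ce^{-\frac{c'K}{\log\lambda}}$ once $K\gg(\log\lambda)^2$, which is what ``$K$ large'' will mean. This yields
\[
|L^B_{nK}+L^B_K-2L^B_{2K}|<Ce^{-c'K/\log\lambda},
\]
together with the analogous bound at scale $2K$.

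The second step is to control $|L^B_{2K}-L^B_K|$, which is the real obstacle. The easy bound from the large deviation estimate is only $|L^B_m-\log\lambda|\le A_f+o(1)$, which is useless here. I would argue as follows. Subtracting the two relations gives
\[
|L^B_{2nK}-L^B_{nK}|\le |L^B_{2K}-L^B_K|+2|L^B_{4K}-L^B_{2K}|+Ce^{-c'K/\log\lambda}.
\]
This does not close by itself, so instead I would use subadditivity directly. The sequence $mL^B_m$ is subadditive, and by invariance of Lebesgue measure under the doubling map also $L^B_{nK}\ge L^B_{2nK}\ge L(E)$. In addition, $L^B_{nK}-L(E)$ can be controlled by applying the displayed relation iteratively along the scales $K_{s+1}=n_sK_s$, with $n_s\sim e^{\frac{cK_s}{10\log\lambda}}$. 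At each stage the relation shows that $L^B_{K_{s+1}}-L^B_{2K_{s+1}}$ is of order $e^{-cK_s/\log\lambda}$. Indeed, applying the displayed relation at scales $K_s$ and $2K_s$ gives
\[
L^B_{K_{s+1}}-L^B_{2K_{s+1}}=(L^B_{2K_s}-L^B_K{}_s)-2(L^B_{4K_s}-L^B_{2K_s})+O(e^{-cK_s/\log\lambda}),
\]
and the initial differences at scale $K_s$ themselves come from stage $s-1$. The increments therefore form a rapidly convergent telescoping series, and summing over $s$ gives
\[
|L(E)-L^B_{K_1}|\lesssim e^{-cK/\log\lambda}.
\]
The same argument applied at scale $2K$ controls the differences at that scale.

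Finally, substitute $L^B_{nK}=L(E)+O(e^{-cK/\log\lambda})$ into the first relation, after the same $(n-2)/n$ cleanup. This gives $|L(E)+L^B_K-2L^B_{2K}|<Ce^{-cK/\log\lambda}$ after shrinking $c$. All constants are uniform in $E\in\sigma(H(x))$, since the measure bound \eqref{5p} and Corollary \ref{co01} are. The delicate point I expect is the base case of the induction, namely showing that $L^B_{2K}-L^B_K$ is already small enough at the first scale. This is exactly where the requirement $K\gg(\log\lambda)^2$ should be needed, so that the $\log\lambda$ losses are absorbed into the exponent.
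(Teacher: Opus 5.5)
\textbf{There is a genuine gap in your second step.} Your first step is fine: applying \eqref{5l} and replacing $\frac{n-2}{n},\frac{2(n-1)}{n}$ by $1,2$ costs only $O(\log\lambda/n)$. The subadditivity remark $L^B_{2m}\le L^B_m$ is also correct.

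The problem is that you apply \eqref{5l} at two \emph{different} base scales $K$ and $2K$, so the short-scale terms never cancel. Write $D_s=L^B_{2K_s}-L^B_{K_s}$ and $D_s'=L^B_{4K_s}-L^B_{2K_s}$. Your recursion is then $D_{s+1}=-D_s+2D_s'+O(e^{-cK_s/\log\lambda})$, which only yields $|D_{s+1}|\le|D_s|+2|D_s'|+\dots$.

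\begin{itemize}
\item This does not contract, so nothing is gained at larger scales.
\item $D_s'$ involves scales $2^jK_s$ that your scheme never controls.
\item Any smallness would have to come from the base case $|L^B_{2K}-L^B_K|\lesssim e^{-cK/\log\lambda}$. That is not just delicate but unavailable, and generically false. The avalanche heuristic is $KL^B_K=KL(E)+b+O(e^{-cK/\log\lambda})$ with a boundary term $b$ that is typically nonzero, so $L^B_{2K}-L^B_K\approx -b/(2K)$.
\item If your base case held, the lemma would collapse to $|L(E)-L^B_K|\lesssim e^{-cK/\log\lambda}$, and the combination $2L^B_{2K}-L^B_K$ would be pointless.
\end{itemize}
So the claimed ``rapidly convergent telescoping series'' has no justification.

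\textbf{The missing idea.} The paper's \eqref{lm}--\eqref{l2m} apply \eqref{5l} at the \emph{same} base scale $K$ with two block numbers, $n$ and $2n$. Both are admissible: Lemma~\ref{le51} only needs $\mu\ge 2n$, and the bound \eqref{5p} is linear in $n$.

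\begin{enumerate}
\item Subtracting the two relations, $L^B_K$ and $L^B_{2K}$ cancel exactly. This gives $|L^B_{2nK}-L^B_{nK}|\lesssim \log\lambda/n$, with no information needed about $L^B_{2K}-L^B_K$.
\item Move up a scale. Take $K_1=nK$ as the new base scale and $n_1\sim e^{cK_1/(10\log\lambda)}$. Write $|L^B_{n_1K_1}-L^B_{K_1}|\le|L^B_{n_1K_1}+L^B_{K_1}-2L^B_{2K_1}|+2|L^B_{2K_1}-L^B_{K_1}|$. Both terms on the right are now small.
\item Sum over the rapidly growing scales. The paper instead telescopes over $2^im$, which is the same mechanism. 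This gives $|L(E)-L^B_{nK}|\lesssim\log\lambda/n\lesssim e^{-c'K/\log\lambda}$.
\item Insert this into your first relation to conclude.
\end{enumerate}
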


\Proof Fix a large $K$ and let $m=nK$. According to \eqref{5n}, \eqref{5l} and the estimate in Corollary \ref{co01} (b) for large $K$,  it follows that
\begin{align}
&|L^{B}_{m}(E) + L^{B}_{K}(E) - 2L^{B}_{2K}(E)| <   \frac{CK}{m},\label{lm}\\
&|L^{B}_{2m}(E) + L^{B}_{K}(E) - 2L^{B}_{2K}(E)| < \frac{CK }{m}.\label{l2m}
\end{align}
Then, by the relationship $m=nK$, we have that
\begin{equation}\label{lblb}
     |L^{B}_{m}(E) - L^{B}_{2m}(E)| < 2C\log\lambda \frac{\log m}{m}.
\end{equation}
Since \eqref{5l} holds for all large time separations, it follows that \eqref{lblb} holds for all large  numbers of the form $2^{i}m$ with $i\in\N$. This yields
\begin{align}\label{lelb}
|L(E) - L^{B}_{m}(E)| &\leq\sum_{i=0}^{\infty} |L^{B}_{2^im}(E) - L^{B}_{2^{i+1}m}(E)|\notag\\&<2C\log\lambda\sum_{i=0}^{\infty}\frac{\log (2^im)}{2^{i}m}<8C \log\lambda\frac{\log m}{m}< 16C \frac{\log n}{n}<16Ce^{-\frac{cK}{20\log\lambda}}.\notag
\end{align}
Applying this to \eqref{lm} and \eqref{l2m}, we have that
\begin{align}
|L(E) + L^{B}_{K}(E) - 2L^{B}_{2K}(E)| &<17Ce^{-\frac{cK}{20\log\lambda}}.
\end{align}
By choosing  suitable  positive constants $C$ and $c$,  we conclude the proof.
\hfill \qedbox
\medskip

We now turn to the proof of Theorem \ref{th01}. It relies on  Lemma \ref{le52} and the property of $L^{B}_n(E)$.

\medskip
\noindent{\textbf{Proof of Theorem \ref{th01}}}.
Let $\Delta E = |E_1 - E_2|$.
The difference can be estimated by
\begin{align}
|L(E_1) - L(E_2)|\leq&| L(E_1) - (2L^{B}_{2K}(E_1) - L^{B}_{K}(E_1)) | \notag\\
&+ |(2L^{B}_{2K}(E_1) -2L^{B}_{2K}(E_2)) - (  L^{B}_{K}(E_1)- L^{B}_{K}(E_2))|\label{term2}\\
& + |(2L^{B}_{2K}(E_2) - L^{B}_{K}(E_2)) - L(E_2)|\notag.
\end{align}

From \eqref{abn},  we get that for all $E\in\sigma(H(x))$,
\begin{align}\label{el}
\left|\frac{d L^{B}_n}{d E} (E)\right|&\leq\int_{\mathbb{T}}\left|\frac{\frac{\partial\|B_{n}\|}{\partial E} (x, E)}{n\|B_{n}(x, E)\|}\right|dx\leq\lambda^{2}\int_{\mathbb{T}}\frac{\|\frac{\partial A_{n}}{\partial E} (x, E)\|}{n\|A_{n}(x, E)\|}dx\notag\\
& =\lambda^{2}\int_{\mathbb{T}}\Big\|\sum_{j=1}^n\Big(\prod_{k=j+1}^nA(T^{k}x, E)\Big(\begin{matrix}
1 & 0\\
0 & 0\end{matrix}\Big)\prod_{k=1}^{j-1}A(T^{k}x, E)\Big)\Big\| \Big/ \Big(n\|A_{n}(x, E)\|\Big)dx \notag\\
& =\lambda^{2}\int_{\mathbb{T}}\sum_{j=1}^n\Big(\prod_{k=j+1}^n\Big\|A(T^{k}x, E)\Big\|\Big\|\Big(\begin{matrix}
1 & 0\\
0 & 0\end{matrix}\Big)\Big\|\prod_{k=1}^{j-1}\Big\|A(T^{k}x, E)\Big)\Big\| \cdot \frac{1}{n} dx \notag\\
&\leq \lambda^{2}\int_{\mathbb{T}}{n}(|E|+\max_x|\lambda f(x)|+1)^{n-1} \cdot \frac{1}{n}dx\notag\\
&\leq \lambda^{2}\int_{\mathbb{T}}(4\lambda)^{n} dx<(4\lambda)^{n+2}.
\end{align}
In the third step, the inequality $\|A_{n}(x, E)\|\geq 1$ is applied to simplify the denominator. The fourth step utilizes the estimate
\begin{align}\label{amax}
  \|A(T^{i}x)\|=\|A(T^{i}x)\|^{-1}=\sqrt{[E-\lambda f(T^ix)]^2+1}\le |E-\lambda f(T^ix)|+1,
\end{align}
for $i\in\Z_+$. Then, the fifth step relies on the constraints that $E\in\sigma(H(x))\subseteq [-2\lambda,2\lambda]$ and $f\in[0,1]$.

Thus,  for any $E_1, E_2\in\sigma(H(x))$:
\begin{equation}\label{el1}
|L^{B}_n(E_1) - L^{B}_n(E_2)| \leq (4\lambda)^{n+2} \Delta E.
\end{equation}

Using the estimate \eqref{el1}, we bound \eqref{term2}:
\begin{align}\label{e3}
\eqref{term2} \leq 2 |L^{B}_{2K}(E_1) - L^{B}_{2K}(E_2)| + |L^{B}_{K}(E_1) - L^{B}_{K}(E_2)| \leq 3\cdot(4\lambda)^{2K+2} \Delta E.
\end{align}
Combining the estimates  in Lemma \ref{le52}  and \eqref{e3}, we obtain
\begin{equation}
|L(E_1) - L(E_2)| < 3\cdot(4\lambda)^{2K+2} \Delta E+2Ce^{-\frac{cK}{\log\lambda}}.
\end{equation}

We now consider two cases based on the value of $\Delta E$:

\medskip
\noindent Case 1:  $\Delta E< \lambda^{-(\log\lambda)^{2}}$.
\medskip

Let $K=\left\lfloor   -\frac{\log\Delta E}{6\log\lambda}\right\rfloor =\left\lfloor   \frac{(\log\lambda)^{2}}{6}\right\rfloor$  so that \eqref{5p} remains valid. Then we have
\begin{align}
&(4\lambda)^{2K+2} \Delta E<\lambda^{5K} \Delta E<\Delta E^{1/6}<\Delta E^{\frac{c}{(\log\lambda)^3}}\\
& e^{-\frac{cK}{\log\lambda}}<\Delta E^{\frac{c}{7(\log\lambda)^2}}<\Delta E^{\frac{c}{(\log\lambda)^3}}.
\end{align}
Therefore, we can obtain that
\begin{equation}\label{case1}
|L(E_1) - L(E_2)| < (2C+3)|E_1 - E_2|^{\frac{c}{{(\log\lambda)^3}} }.
\end{equation}

\medskip
\noindent Case 2:  $\Delta E\geq  \lambda^{-(\log\lambda)^{2}}$.
\medskip

Corollary \ref{co01} (b) provides a trivial bound for the Lyapunov exponent $L(E)$, which yields:
 \begin{equation}\label{case2}
|L(E_1) - L(E_2)|  \leq 2A.
\end{equation}

In this case, since $\Delta E \geq  \lambda^{-(\log\lambda)^{2}}$, we have $1 \leq \frac{\Delta E}{\lambda^{-(\log\lambda)^{2}}}$. Therefore, from the trivial estimate \eqref{case2}, we can obtain
that
\begin{equation}
|L(E_1) - L(E_2)|  \leq 2A\cdot\left(\frac{\Delta E}{ \lambda^{-(\log\lambda)^{2}}}\right)^{\frac{c}{{(\log\lambda)^3}} }=2A\lambda^{(\log\lambda)^{2}}\left({\Delta E}\right)^{\frac{c}{{(\log\lambda)^3}} }. \end{equation}

Let $C_\lambda = \max\{2C+3, 2A\lambda^{(\log\lambda)^{2}}\}$, then the inequality $$\left|L\left(E_{1}\right)-L\left(E_{2}\right)\right| \leq C_\lambda |E_1 - E_2|^{\frac{c}{(\log\lambda)^{3}}}$$ holds for both cases, which completes the proof.
\hfill \qedbox
\medskip

From Theorem \ref{th01} and the Thouless formula:
\begin{equation}
L(z)=\int\log |E-z|\,dk(E),
\end{equation}
where $k(E)$ is the integrated density of states \cite{D}, we can directly draw the following corollary.

\begin{coro} For  sufficiently large $\lambda$,  the integrated density of states of $H(x)$ is H\"{o}lder continuous.
\end{coro}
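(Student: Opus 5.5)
The plan is the standard route from H\"{o}lder continuity of the Lyapunov exponent to H\"{o}lder continuity of the integrated density of states, via the Thouless formula and a harmonic-analysis argument in the spirit of Goldstein--Schlag. The idea is that $L$ is the logarithmic potential of the measure $dk$. A H\"{o}lder modulus for $L$ on the real axis then transfers, up to a logarithmic loss, to a H\"{o}lder bound on the $dk$-mass of short intervals.

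\textbf{Step 1: extend $L$ to the upper half plane.} For $z=E+i\eta$ with $\eta>0$, the Thouless formula gives
\[
L(E+i\eta)=\int\log|E'-E-i\eta|\,dk(E'),
\]
which is harmonic in $z$. Its boundary values on $\R$ are $L(E)$.

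\textbf{Step 2: control $L(E+i\eta)-L(E)$ by the real-axis modulus.} Since $u(z)=L(z)-\int \log|E'-z|\,dk(E')$ vanishes identically, the plan is to write
\[
L(E+i\eta)=\int P_\eta(E-E')L(E')\,dE'+\text{(bounded harmonic correction)},
\]
where $P_\eta$ is the Poisson kernel. Because $\sigma(H)\subseteq[-2\lambda,2\lambda]$, $L(E')\sim\log|E'|$ at infinity, and this growth is handled by subtracting $\log|z|$. Combining with Theorem~\ref{th01}, and using that $L$ is Lipschitz away from the spectrum (where it is harmonic), one gets
\[
|L(E+i\eta)-L(E)|\le C_\lambda\,\eta^{\kappa}\log(1/\eta),\qquad \kappa=\frac{c}{(\log\lambda)^3}.
\]

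\textbf{Step 3: lower bound for $L(E+i\eta)-L(E)$.} The direct computation is
\[
L(E+i\eta)-L(E)=\frac12\int\log\Big(1+\frac{\eta^2}{(E'-E)^2}\Big)dk(E').
\]
On $|E'-E|\le\eta$ the integrand is at least $\frac12\log 2$, so
\[
k(E+\eta)-k(E-\eta)\le \frac{2}{\log 2}\,\bigl(L(E+i\eta)-L(E)\bigr)\le C_\lambda\,\eta^{\kappa}\log(1/\eta).
\]
This gives H\"{o}lder continuity of $k$ with any exponent below $\kappa$. One should also note that $k$ is constant off $\sigma(H)$, so energies outside the spectrum are trivial.

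\textbf{Main obstacle.} The difficulty lies in Step 2: the Poisson integral of a H\"{o}lder function with exponent $\kappa<1$ must be handled through its conjugate function. The plan is to split the Poisson integral into $|E'-E|\le1$ and $|E'-E|>1$. On the near region the H\"{o}lder bound applies directly. On the far region the oscillation is bounded by the crude estimate $|L|\le 3\log\lambda$ from Corollary~\ref{co01}(b), together with the kernel decay $\eta/(E-E')^2$. Because Theorem~\ref{th01} is stated only for $E_1,E_2\in\sigma(H(x))$, it will need to be extended across gaps using harmonicity of $L$ there.
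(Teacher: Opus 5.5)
Your route is the one the paper intends. The paper gives no argument beyond citing the Thouless formula and Theorem~\ref{th01}, and your Steps 1--3 are the standard way to make that implication precise. Step 3 is correct as written. Step 2 is easier than your ``main obstacle'' paragraph suggests: with $\kappa=c/(\log\lambda)^3$,
\[
\frac{1}{\pi}\int_{|t|\le 1}\frac{\eta\,|t|^{\kappa}}{t^{2}+\eta^{2}}\,dt\le \frac{C}{1-\kappa}\,\eta^{\kappa},
\]
so no conjugate-function argument is needed, and there is no $\log(1/\eta)$ loss. The conjugate function only enters in the Goldstein--Schlag variant, where $k$ is read off as the boundary value of the harmonic conjugate of $L$. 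The harmonic correction you allow for is in fact zero: it vanishes on $\R$ and grows at most logarithmically, so reflection and Liouville kill it. In the far region, use $L(E)\ge 0$ and $L(E')\le\log(|E'|+\lambda+2)$, valid since $\operatorname{supp} dk\subset[-2,\lambda+2]$. Corollary~\ref{co01}(b) only covers $|E'|\le 2\lambda$.

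The step that does not work as described is extending the H\"older bound across spectral gaps ``using harmonicity of $L$ there.'' Step 2 needs an upper bound on $L(E')-L(E)$ for $E$ in the spectrum and $E'$ in a nearby gap $(a,b)$.
\begin{itemize}
\item Harmonicity gives only $|L'(E')|\le\operatorname{dist}(E',\operatorname{supp} dk)^{-1}$, which degenerates at the gap edges.
\item Concavity on the gap ($L''=-\int (E'-s)^{-2}\,dk(s)<0$) gives the lower bound $L(E')-L(a)\ge -C(E'-a)^{\kappa}$, but not the upper one.
\item The upper bound is circular:
\[
L(E')-L(a)\le\int_{s\le a}\log\Big(1+\frac{E'-a}{a-s}\Big)\,dk(s),
\]
and the right-hand side is at least $\log 2\,\bigl(k(a)-k(2a-E')\bigr)$. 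So it already encodes the regularity of $k$ at $a$, which is what you are trying to prove.
\end{itemize}

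The repair is to note that the proof of Theorem~\ref{th01} never uses $E\in\sigma(H(x))$ beyond $|E|\le 2\lambda$. The arguments behind Corollary~\ref{co01}(a), Lemma~\ref{le52} and \eqref{el1} go through verbatim for every $E\in[-2\lambda,2\lambda]$. Hence $L$ is H\"older on all of $[-2\lambda,2\lambda]$, which contains the $1$-neighbourhood of $\operatorname{supp} dk$ once $\lambda\ge 3$. Centre your Poisson estimate at points of $\operatorname{supp} dk$; off the support $k$ is locally constant, so nothing is lost. Steps 2 and 3 then give H\"older continuity of $k$ with exponent $c/(\log\lambda)^3$.
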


\section{ Green's Function Estimate}

This section establishes upper bounds for the Green's function, which are crucial for analyzing eigenvalue problems and the decay properties of solutions.

For any interval $\Lambda  \subset \N$,  the restriction of $H$ to $\Lambda$ is given by
\begin{equation}
H_\Lambda = H_\Lambda(x) = R_\Lambda H(x) R_\Lambda,
\end{equation}
where $R_\Lambda$ is the restriction operator.

For any $E$ that is not an eigenvalue of $H_\Lambda(x)$, the Green's function is denoted by
\begin{equation}
G_\Lambda = G_\Lambda(x,E) = (H_\Lambda(x) - E )^{-1}.
\end{equation}

Let $\Lambda = [0,n]$ and define $H_n(x) = H_{[0,n]}(x)$. Then, for $0 \leq n_1 \leq n_2 \leq n$, it follows from Cramer's rule that
\begin{equation} \label{g}
G_n(x,E)(n_1,n_2) = \frac{ \det[H_{n_1-1}(x) - E] \det[H_{n-n_2-1}(2^{n_2+1}x) - E] }{ \det[H_n(x) - E] }.
\end{equation}

According to \cite{BG2}, the transfer matrices $A_n(x,E)$ defined in \eqref{a} can also be expressed as
\begin{align}
A_n(x,E) &= \begin{pmatrix}
 \det[H_n(x) - E] & -\det[H_{n-1}(2x) - E] \\
 \det[H_{n-1}(x) - E] & -\det[H_{n-2}(2x) - E]
\end{pmatrix}\label{an}.
\end{align}

Studies such as \cite{ADZ} and \cite{BoS} have established large deviation estimates for functions defined on the doubling map. We adapt the regularity conditions from Lemma 8.1 of \cite{BoS} to incorporate these estimates into our framework.

For any integer $n \geq 0$, we define the set
\begin{equation}
\Omega^{(n)}_1 := \{ x \in \mathbb{T} : 2^n x \equiv 0 \pmod{1} \}.
\end{equation}
Based on the assumption of $f$ in Section 2,  it follows that the discontinuity points of the functions $f$, $f\circ T$, $\cdots$, $f\circ T^n$ are all contained in the set $\Omega^{(n)}_1$. Since this set consists of a finite number of points on the torus, its Lebesgue measure satisfies $\operatorname{mes}(\Omega^{(n)}_1) = 0$ for all $n \in \mathbb{Z}_+$.

\begin{lemma}\label{le61}
Let $M > 1$, $n\in \N$, and let $F : \mathbb{T} \to \mathbb{R}$ satisfy:

\begin{enumerate}[(1)]
    \item $|F(x)| \leq 1$ for all $x \in \mathbb{T}$.

    \item For each  $j = 0, 1, \ldots, 2^{n} - 1$, the function $F$ is continuous on the interval
          \[
          I_{n,j} := \left[ \frac{j}{2^{n}}, \frac{j+1}{2^{n}} \right).
          \]

    \item $
          |F'(x)| \leq M \text{ for } x \in \mathbb{T} \setminus \Omega^{(n)}_1$, and
          $|F_{+}'(x)| \leq M \text{ for } x \in \Omega^{(n)}_1.
          $
\end{enumerate}
 Then we have
\begin{align}
{\rm mes} \Big[ x\in \T: \big| \int_{\mathbb{T}}Fdx-\frac{1}{r}\sum_{k=1}^rF(2^kx)\big|\,>\delta \Big]<e^{\frac{-c\delta ^2r}{J^2}}.
\end{align}
where $J >  \max\{\log (CM\delta^{-1}),n\}$.
\end{lemma}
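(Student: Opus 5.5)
We reduce to the classical Bourgain--Schlag large deviation estimate for Birkhoff sums of the doubling map (Lemma 8.1 of \cite{BoS}). That estimate is proved by approximating the observable by a function measurable with respect to a finite dyadic partition. Such a function turns the Birkhoff sum into a sum of functions of blocks of binary digits, and these are independent up to a bounded overlap. The plan has three steps: (i) approximate $F$ by a dyadic step function, (ii) apply a Hoeffding/Azuma-type inequality to the Bernoulli shift, and (iii) control the approximation error uniformly in $x$.

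\emph{Step 1 (dyadic approximation).} Fix $J$ with $J>\max\{\log(CM\delta^{-1}),n\}$. Since $J\ge n$, every dyadic interval $I_{J,i}=[i2^{-J},(i+1)2^{-J})$ lies inside a single $I_{n,j}$. On each $I_{n,j}$, $F$ is continuous with one-sided derivative bounded by $M$, including the right derivative at the left endpoint, which lies in $\Omega_1^{(n)}$. Hence, by the mean value theorem on the half-open interval, $|F(x)-F(y)|\le M2^{-J}$ for $x,y\in I_{J,i}$. Define $F_J$ to be the average of $F$ over $I_{J,i}$ on that interval. Then
\[
\|F-F_J\|_\infty\le M2^{-J}<\delta/4
\]
by the choice of $J$, taking the base of the logarithm as $2$ and adjusting $C$. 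Also $\int F_J=\int F$ and $|F_J|\le1$. Consequently
\[
\Big|\frac1r\sum_{k=1}^r F(2^kx)-\frac1r\sum_{k=1}^r F_J(2^kx)\Big|<\delta/4
\]
for every $x$. This is the point where the discontinuities are handled: they sit exactly on dyadic endpoints of level $n\le J$, so they never occur inside an approximating cell.

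\emph{Step 2 (independence via binary digits).} Write $x=\sum_{\ell\ge1}\epsilon_\ell 2^{-\ell}$ with the $\epsilon_\ell$ i.i.d.\ Bernoulli$(1/2)$ under Lebesgue measure. Then $F_J(2^kx)=\Phi(\epsilon_{k+1},\dots,\epsilon_{k+J})$ depends only on $J$ consecutive digits. Split $\{1,\dots,r\}$ into the $J$ residue classes modulo $J$. Within a class $\{k\equiv s \pmod J\}$ the summands depend on disjoint digit blocks, so they are i.i.d., bounded by $1$, and have mean $\int F_J=\int F$. Hoeffding's inequality applied to each class (about $r/J$ terms) gives deviation larger than $\delta/2$ with probability at most $2e^{-c\delta^2 r/J}$. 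A union bound over the $J$ classes, together with the fact that the average of the class averages is the full average, yields
\[
\mathrm{mes}\Big[\Big|\int F_J-\frac1r\sum_{k=1}^r F_J(2^kx)\Big|>\delta/2\Big]\le 2J e^{-c\delta^2 r/J}.
\]
We then absorb $2J$ into the exponent, which costs an extra factor of $J$ and produces the stated $J^2$ in the denominator. This absorption is valid when $\delta^2r/J^2\gtrsim \log J$. In the complementary regime the claimed bound exceeds $1$ up to the constant $c$, so the statement is trivial there after shrinking $c$.

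\emph{Step 3 (conclusion and main obstacle).} Combining Steps 1 and 2 via the triangle inequality gives the estimate with threshold $\delta$. The main delicate point is Step 1: one must use that the partition into $I_{n,j}$ is refined by the level-$J$ dyadic partition, and that the one-sided derivative bound at the points of $\Omega_1^{(n)}$ suffices for the Lipschitz estimate on each half-open cell. The residue-class decoupling in Step 2 is standard, but care is needed with the endpoints of the dyadic intervals. Points with two binary expansions form a null set, so they do not affect the measure estimate.
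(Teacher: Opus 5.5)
Your Step 1 is what the paper writes out: replace $F$ by $\mathbb{E}_JF$ with $J\ge n$, so that the points of $\Omega^{(n)}_1$ are endpoints of level-$J$ cells and $\|F-\mathbb{E}_JF\|_\infty<M2^{-J}$.

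For the concentration step the paper only defers to Bourgain's martingale method from \cite{BoS}. In that method one writes $\mathbb{E}_JF-\int F=\sum_{j=1}^J\Delta_jF$. For each fixed $j$, the sequence $k\mapsto\Delta_jF(2^kx)$ is a bounded martingale difference sequence for the dyadic filtration. One applies an Azuma-type bound at level $\delta/J$ to each and takes a union over $j\le J$; splitting $\delta$ into $J$ pieces is where $J^2$ comes from.

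You instead use that $F_J\circ T^k$ depends on a window of $J$ consecutive binary digits. You decouple into $J$ residue classes of genuinely i.i.d.\ terms and apply Hoeffding at level $\delta/2$. This is correct and more elementary, and it gives the sharper exponent $\delta^2r/J$. The martingale route is more robust, since it needs no exact finite-window dependence, but here both work.

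The step that is wrong is the last one, where you absorb the prefactor $2J$.
\begin{itemize}
\item The bound $e^{-c\delta^2r/J^2}$ is always $<1$, so it never ``exceeds $1$'', and shrinking $c$ does not change that.
\item At your threshold $\delta^2r/J^2\sim\log J$ this bound is polynomially small in $J$, so that regime is not trivial.
\end{itemize}
The correct split is at $\delta^2r/J\approx\log J$:
\begin{itemize}
\item If $\delta^2r/J\ge(2/c)\log(2J)$, then $2Je^{-c\delta^2r/J}\le e^{-c\delta^2r/(2J)}\le e^{-c\delta^2r/(2J^2)}$.
\item Otherwise $\delta^2r/J^2<(2/c)\log(2J)/J\le 2/c$. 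Then $e^{-c\delta^2r/(2J^2)}>e^{-1}$, and you only get the estimate with an absolute constant in front.
\end{itemize}

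This is not a defect of your method: without such a prefactor, or a lower bound on $r$, the statement is false. Take $n=1$, $F=1$ on $[0,\frac12)$ and $F=-1$ on $[\frac12,1)$; the hypotheses hold for any $M>1$, and $\int F=0$. Take $r$ odd and $\delta<1/r$. Then $\frac1r\sum_{k=1}^rF(2^kx)$ is an odd multiple of $1/r$ for every $x$, so the measure in question equals $1$. The paper's appeal to Bourgain's argument hides the same issue, since its union over $j\le J$ produces the same factor $J$.

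So your Steps 1--2 actually prove the estimate with an absolute constant prefactor. Equivalently, they prove the stated bound for $\delta^2r\ge CJ\log J$. That is all that is used in Proposition~\ref{le62}, where $r=n^4$.
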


\Proof We still follow Bourgain's method  for constructing martingale difference sequences.

Consider the family of conditional expectation operators $\{\mathbb{E}_{i}\}_{i\geq0}$, where $\mathbb{E}_{i}$ corresponds to the dyadic partition of  $\mathbb{T}$ into $2^{i}$ congruent intervals. That is,
\begin{equation}
\mathbb{E}_{i}[F] = \sum_{i} \left( \frac{1}{|I_i|} \int_{I_i} F dx \right) \chi_{I},
\end{equation}
where  $I_i:=\Big[\frac{j}{2^i},\,\frac{j+1}{2^i}\Big)$ and $j=0,1,\cdots,2^i-1$.

Express the function $F$ in the form of a martingale difference sequence:
\begin{equation}
F = \int_{\mathbb{T}} F \, dx + \sum_{j=1}^{\infty} \Delta_{j} F , \quad \text{where } \Delta_{j} F = \mathbb{E}_{j}[F ] - \mathbb{E}_{j-1}[F ].
\end{equation}
Noting that the original finite derivative assumption  in the Lemma 8.1 of \cite{BoS} was used to control $\| F - \mathbb{E}_{J} F \|_{\infty}$. Choose $J\geq n$, all points in set $\Omega^{(n)}_1$ are at the endpoints of interval $I_J$, and we have  that
\begin{align}
\| F - \mathbb{E}_{J} F \|_{\infty} &\leq \sup_{x\in\T}\left| \frac{1}{|I_J^{x}|}\int_{I_J^{x}}F(x)-F(y)\, dy\right|< M 2^{-J} < \frac{\delta}{10}.
\end{align}
where  $I_J^{x}$ denote the dyadic interval of length $2^{-J}$ that contains $x$.  Hence, we choose an integer $J>  \max\{\log (CM\delta^{-1}),n\}$, i.e. satisfies
\begin{equation}
 2^{J} > 10M\delta^{-1}\quad \text{and } \quad J\geq n.
\end{equation}
where $C$ is a suitable constant.

The remainder of the argument proceeds via Bourgain’s method, thus  establishing this lemma.
\hfill \qedbox
\medskip

\begin{prop}\label{le62}
For all $E\in [-2\lambda,2\lambda]$, there exists a large $n_0=n_0(x,\lambda)$ such that the set
\begin{align}\label{p11}
\mathcal{G} := \left\{x\in\mathbb{T}:\forall n\geq n_0,\sup_{\substack{|E|\leq2\lambda\\ 0\leq k_0\leq n^{8}}}\left|L^A_n(E)-\frac{1}{n^{4}}\sum_{k=1}^{n^{4}}\frac{1}{n}\log\left\|A_n\left(2^{k+k_0}x, E\right)\right\|\right|\leq1\right\}
\end{align}
satisfying $\mathcal{G} \subset \mathbb{T}$ with ${\rm mes}[\mathbb{T} \setminus  \mathcal{G}] = 0$.
\end{prop}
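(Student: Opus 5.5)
The plan is to apply Lemma~\ref{le61} to the function $F_n(x)=F_n(x,E)=\frac{1}{3n\log\lambda}\log\|A_n(x,E)\|$, use Borel--Cantelli in $n$, and handle the supremum over $E$ and $k_0$ by a finite net plus a union bound.

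\emph{Step 1 (checking the hypotheses of Lemma~\ref{le61}).} By Corollary~\ref{co01}(b), $|F_n|\le 1$ on $\mathbb{T}$. The entries of $A_n(x,E)$ are polynomials in $f(2^kx)$, $1\le k\le n$. These are continuous, and one-sided differentiable, on each dyadic interval $I_{n+1,j}$, because all discontinuities lie in $\Omega^{(n+1)}_1$. The chain rule together with $\|f\|_{C^1(0,1)}<C$ gives $|\partial_x A(2^kx)|\le C\lambda 2^k$. Hence
\[
|F_n'(x)|\le \frac{1}{3n\log\lambda}\sum_{k=1}^n \|A_n\|^{-1}\|\partial_x A_n\|\le (C\lambda)^{n}2^{n}=:M_n ,
\]
where we use $\|A_n\|\ge 1$ and crude bounds as in \eqref{el}. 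Thus hypotheses (1)--(3) hold with $n+1$ in place of $n$. Since $\int F_n\,dx=L_n^A(E)/(3\log\lambda)$, we apply the lemma with $\delta=1/(3\log\lambda)$ and $r=n^4$. Then $J\sim \max\{n\log(C\lambda)+\log\log\lambda,\,n+1\}\le Cn\log\lambda$. Because the Lebesgue measure is $T$-invariant, replacing $x$ by $2^{k_0}x$ leaves the measure unchanged. So for each fixed $E$ and $k_0$,
\[
{\rm mes}\Big[x:\Big|L_n^A(E)-\frac{1}{n^4}\sum_{k=1}^{n^4}\frac1n\log\|A_n(2^{k+k_0}x,E)\|\Big|>1\Big]<e^{-c n^4/(n\log\lambda)^2(\log\lambda)^2}=e^{-cn^2/(\log\lambda)^4}.
\]

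\emph{Step 2 (uniformity in $E$).} Choose a grid $\mathcal{E}_n\subset[-2\lambda,2\lambda]$ with mesh $(4\lambda)^{-2n-4}$, so that $\#\mathcal{E}_n\le(4\lambda)^{2n+5}$. By \eqref{el}, or its analogue for $\frac1n\log\|A_n(y,E)\|$ pointwise in $y$ (the same derivative computation applies without integrating), every quantity in \eqref{p11} moves by at most $(4\lambda)^{n+2}\cdot(4\lambda)^{-2n-4}\le 1/4$ when $E$ is replaced by the nearest grid point. It therefore suffices to prove the bound with the constant $1/2$ on the grid. The union over the grid and over $0\le k_0\le n^8$ has measure at most
\[
(4\lambda)^{2n+5}(n^8+1)e^{-cn^2/(\log\lambda)^4},
\]
which is summable in $n$. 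This holds because $n^2/(\log\lambda)^4$ eventually dominates $n\log\lambda$, which is the role of the power $n^4$ and of taking $n$ large relative to $\lambda$.

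\emph{Step 3 (Borel--Cantelli).} Borel--Cantelli now shows that a.e.\ $x$ lies in only finitely many of the exceptional sets. This yields $n_0(x,\lambda)$ such that the estimate in \eqref{p11} holds for all $n\ge n_0$, and hence ${\rm mes}[\mathbb{T}\setminus\mathcal G]=0$.

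The main obstacle is Step 1. The derivative bound $M_n$ grows exponentially in $n$, which forces $J\sim n\log\lambda$. The deviation exponent $\delta^2 r/J^2$ then survives only because $r=n^4$. The one-sided derivative conditions at the dyadic points $\Omega_1^{(n+1)}$ must also be checked carefully, using right-continuity of $f$ and the existence of $f'_+(0)$.
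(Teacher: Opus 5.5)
Your proposal is correct and follows essentially the same route as the paper. You apply the martingale large deviation bound of Lemma~\ref{le61} to a normalized $\frac1n\log\|A_n(\cdot,E)\|$ for fixed $E$ and $k_0$, using $T$-invariance of Lebesgue measure. You then pass to all $|E|\le 2\lambda$ via an exponentially fine energy grid and the Lipschitz bound in $E$, take a union bound over the grid and over $0\le k_0\le n^8$, and conclude with Borel--Cantelli.

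Normalizing by $3n\log\lambda$ rather than the paper's $n\lambda$ only improves the exponent, from $e^{-c\lambda^{-4}n^{-2}r}$ to $e^{-cn^2/(\log\lambda)^4}$. One small fix: to get the threshold $\frac12$ you need on the grid, take $\delta=1/(6\log\lambda)$ instead of $1/(3\log\lambda)$ in Step 1; nothing else changes.
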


\Proof
 Fix any $E\in[-2\lambda,2\lambda]$ and let the function
\begin{equation}
F(x)=\frac{1}{  n\lambda}\log\left\|A_{n}(x, E)\right\|,
\end{equation}
where $x \in \mathbb{T}$.
 Corollary \ref{co01} (b)  implies that  $|F(x)| \leq 1$ for all $x \in \mathbb{T}$.
 It then follows from the assumption on $f$ (Section 2) and the transfer matrice \eqref{a} that $F$ is continuous on each interval $ I_{n,j} = \left[ \frac{j}{2^{n}}, \frac{j+1}{2^{n}} \right)$, $j = 0, 1, \ldots, 2^{n} - 1$.

For $x\in\T\setminus \Omega^{(n)}_1$, we can obtain that
\begin{align}\label{p14}
\left|F'(x) \right|&=\left|\frac{\frac{\partial\| A_{n}\|}{d x} (x, E)}{\lambda n\|A_{n}(x, E)\|}\right|\leq\frac{\|\frac{\partial A_{n}}{\partial x} (x, E)\|}{\lambda n\|A_{n}(x, E)\|}\notag\\
& =\Big\|\sum_{j=1}^n\Big(\prod_{k=j+1}^nA(T^{k}x, E)\Big(\begin{matrix}
-\lambda 2^{j}f'(T^{j}x) & 0\\
0 & 0\end{matrix}\Big)\prod_{k=1}^{j-1}A(T^{k}x, E)\Big)\Big\| \Big/ \Big(\lambda n\|A_{n}(x, E)\|\Big) \notag\\
& =\sum_{j=1}^n\Big(\prod_{k=j+1}^n\Big\|A(T^{k}x, E)\Big\|\Big\|\Big(\begin{matrix}
-\lambda2^{j} f'(T^{j}x) & 0\\
0 & 0\end{matrix}\Big)\Big\|\prod_{k=1}^{j-1}\Big\|A(T^{k}x, E)\Big)\Big\| \cdot \frac{1}{\lambda n}  \notag\\
&\leq \sum_{j=1}^n\Big[(|E|+\max_x|\lambda f(x)|+1)^{n-1} \cdot \lambda 2^{j}f'(T^{j}x) \Big]\cdot \frac{1}{\lambda n}\notag\\
&\leq (|E|+\max_x|\lambda f(x)|+1)^{n}\cdot\sum_{j=1}^n 2^{j} <(16\lambda)^{n}\doteq M.
\end{align}
The third step uses  $\|A_{n}(x, E)\|\geq 1$ to simplify the denominator,  the fourth step employs the estimate \eqref{amax}, and the fifth step relies on  the assumption of $f$ in Section 2.

Furthermore,  for the fixed  $E\in[-2\lambda,2\lambda]$,  since each $A (T^{j}\cdot ,E)$ ($j=1,2,\cdots,n$) is right-differentiable at the point $x\in \Omega^{(n)}_1$, the product rule implies that  $A_{n}(\cdot,E)$ is right-differentiable at $x$. Moreover, the invertibility of  $A_{n}(\cdot,E)$ ensures that its norm $\|A_{n}(\cdot,E)\|$ is also right-differentiable at  $x\in \Omega^{(n)}_1$, because the norm is differentiable at invertible operators. Consequently, by the chain rule, the composite function $F$ is  right-differentiable at $x\in \Omega^{(n)}_1$, and we have
\begin{align}\label{p141}
\left|F_+'(x)\right|&=\left|\frac{\frac{\partial_+ \|A_{n}\|}{\partial x} (x, E)}{\lambda n\|A_{n}(x, E)\|}\right|\leq\frac{\|\frac{\partial_+  A_{n}}{\partial x} (x, E)\|}{\lambda n\|A_{n}(x, E)\|}\notag\\
& =\Big\|\sum_{j=1}^n\Big(\prod_{k=j+1}^nA(T^{k}x, E)\Big(\begin{matrix}
-\lambda 2^{j}f_{+}'(T^{j}x) & 0\\
0 & 0\end{matrix}\Big)\prod_{k=1}^{j-1}A(T^{k}x, E)\Big)\Big\| \Big/ \Big(\lambda n\|A_{n}(x, E)\|\Big) \notag\\
&\leq (|E|+\max_x|\lambda f(x)|+1)^{n}\cdot\sum_{j=1}^n 2^{j} <(16\lambda)^{n}=M.
\end{align}

According to Lemma \ref{le61},  we choose an integer $J= n\log (C\lambda)$, and  the probability can be estimated as follows:
\begin{align}\label{p15}
&{\rm mes}\left[x\in\mathbb{T}:  \left|L^{A}_{n}(E)-\frac{1}{r}\sum_{k=1}^{r}\frac{1}{n}\log\left\|A_{n}\left(2^{k}x, E\right)\right\|\right|>\frac{1}{2}\right]\notag\\
=&{\rm mes}\left[x\in\mathbb{T}: \left|\frac{1}{ \lambda}L^{A}_{n}(E)-\frac{1}{r}\sum_{k=1}^{r}\frac{1}{n\lambda}\log\left\|A_{n}\left(2^{k}x, E\right)\right\|\right|>\frac{1}{2 \lambda}\right]<e^{-c\lambda^{-4}n^{-2}r}.
\end{align}

To consider the entire energy range $[-2\lambda\leq E \leq 2\lambda]$, the strategy of discretization approximation is employed.

Define the continuous bad set $B^{(n)}_{\text{cont}}=B^{(n)}_{\text{cont}}(x,\lambda) $:
\[
B^{(n)}_{\text{cont}}:=\left\{x\in T:\sup_{\substack{|E|\leq2\lambda\\ 0\leq k_0\leq r^{2}}}\left|L^A_{n}(E)-\frac{1}{r}\sum_{k=1}^{r}\frac{1}{n}\log\left\|A_{n}\left(2^{k+k_{0}}x, E\right)\right\|\right|>1\right\},
\]
and the discrete bad set $S_{E_{j},k_{0}}$:
\[
S^{(n)}_{E_{j},k_{0}}:=\left\{x\in T:\left|L^A_{n}(E_j)-\frac{1}{r}\sum_{k=1}^{r}\frac{1}{n}\log\left\|A_{n}\left(2^{k+k_{0}}x, E_j\right)\right\|\right|>\frac{1}{2}\right\},
\]
for fixed discrete energies $E_j$  and shift $k_0$.
According to \eqref{p15}, for any fixed $E_j$  and shift $k_0$, there exists a constant $c>0$ such that
\begin{equation}\label{eq:measure_estimate}
\operatorname{mes}\left(S^{(n)}_{E_{j},k_{0}}\right)<\exp(-c\lambda^{-4}n^{-2}r).
\end{equation}

We can obtain the that for all $x\in\T$ and $E\in[-2\lambda,2\lambda]$,
\begin{align}\label{p13}
\left|\frac{\partial\log\left\|A_{n}\right\|}{\partial E}(x, E) \right|&=\left|\frac{\frac{\partial \|A_{n}\|}{\partial E} (x, E)}{\|A_{n}(x, E)\|}\right|\leq\frac{\|\frac{\partial A_{n}}{\partial E} (x, E)\|}{\|A_{n}(x, E)\|}\notag\\
& =\Big\|\sum_{j=1}^n\Big(\prod_{k=j+1}^nA(T^{k}x, E)\Big(\begin{matrix}
1 & 0\\
0 & 0\end{matrix}\Big)\prod_{k=1}^{j-1}A(T^{k}x, E)\Big)\Big\| \Big/ \|A_{n}(x, E)\| \notag\\
&\leq (|E|+\max_x|\lambda f(x)|+1)^{n} <(4\lambda)^{n},
\end{align}
 and
\begin{align}\label{p131}
\left|\frac{d L^{A}_n}{d E} (E)\right|&\leq\int_{\mathbb{T}}\frac{1}{n}\left|\frac{\partial\log\left\|A_{n}\right\|}{\partial E}(x, E) \right|dx<(4\lambda)^{n}.
\end{align}

On the interval $[-2\lambda, 2\lambda]$ construct a finite point set $\{E_j\}_{j=1}^{J}$ that is $\frac{1}{4}\cdot (4\lambda)^{-n}$-dense. This means that for any $E\in I$, there exists some $E_j$ such that
\begin{equation}\label{ej}
|E-E_{j}|< \frac{1}{4}\cdot (4\lambda)^{-n}.
\end{equation}
The minimum number of grid points $J$ satisfies
\begin{equation}\label{j}
J\sim\frac{4\lambda}{\frac{1}{4}\cdot(4\lambda)^{-n}}=4\cdot(4\lambda)^{n+1}.
\end{equation}

 From the conditions \eqref{p13} and \eqref{p131}, it follows that for fixed $x$ and $k_{0}$,
\begin{equation}\label{eq5}
\left|\log\left\|A_{n}(2^{k+k_{0}}x,E)\right\|-\log\left\|A_{n}(2^{k+k_{0}}x,E')\right\|\right|\leq (4\lambda)^{n}|E-E'|,
\end{equation}
and
\begin{equation}\label{eq6}
|L^A_{n}(E)-L^A_{n}(E')|\leq (4\lambda)^{n}|E-E'|.
\end{equation}

We will prove that if $x$ belongs to the continuous bad set $B^{(n)}_{\text{cont}}$, then it must belong to some discrete bad set $S^{(n)}_{E_{j},k_{0}}$.

Take $x_0\in B^{(n)}_{\text{cont}}$. By the definition of $B^{(n)}_{\text{cont}}$, there exists some energy $E^{*}\in [-2\lambda, 2\lambda]$ and  $k_{0}^{*}\in\{0,1,\ldots,r^{2}\}$ such that
\begin{align}\label{623}
  \left|L^A_{n}(E^{*})-\frac{1}{r}\sum_{k=1}^{r}\frac{1}{n}\log\left\|A_{n}\left(2^{k+k_{0}^{*}}x_0, E^{*}\right)\right\|\right|>1.
\end{align}

By the denseness of the grid \eqref{ej}, there exists a grid point $E_{j^{*}}\in\{E_{j}\}$ satisfying
\begin{align}\label{eq8}
  |E^{*}-E_{j^{*}}|<\frac{1}{4}\cdot(4\lambda)^{-n} \quad\Rightarrow\quad (4\lambda)^{n}|E^{*}-E_{j^{*}}|<\frac{1}{4}.
\end{align}

 Consider the deviation at the discrete point $E_{j^{*}}$. We have
\begin{align*}
  &\left|L^A_{n}(E_{j^{*}})-\frac{1}{r}\sum_{k=1}^{r}\frac{1}{n}\log\left\|A_{n}\left(2^{k+k_{0}^{*}}x_0, E_{j^{*}}\right)\right\|\right| \\
  \geq &\left|L^A_{n}(E^{*})-\frac{1}{r}\sum_{k=1}^{r}\frac{1}{n}\log\left\|A_{n}\left(2^{k+k_{0}^{*}}x_0, E^{*}\right)\right\|\right|
   -\left|L^A_{n}(E^{*})-L^A_{n}(E_{j^{*}})\right| \\
  & -\left|\frac{1}{r}\sum_{k=1}^{r}\frac{1}{n}\log\left\|A_{n}\left(2^{k+k_{0}^{*}}x_0, E^{*}\right)\right\|-\frac{1}{r}\sum_{k=1}^{r}\frac{1}{n}\log\left\|A_{n}\left(2^{k+k_{0}^{*}}x_0, E_{j^{*}}\right)\right\|\right|.
\end{align*}

Substituting \eqref{623},  \eqref{eq5}, \eqref{eq6} and \eqref{eq8} into the above:
\begin{align*}
  \text{Left-hand side} &>1 - (4\lambda)^{n}|E^{*}-E_{j^{*}}| - (4\lambda)^{n}|E^{*}-E_{j^{*}}| \\
  & = 1 - 2 \cdot (4\lambda)^{n}|E^{*}-E_{j^{*}}|> \frac{1}{2}.
\end{align*}

Therefore,
\begin{align}
  \left|L^A_{n}(E_{j^{*}})-\frac{1}{r}\sum_{k=1}^{r}\frac{1}{n}\log\left\|A_{n}\left(2^{k+k_{0}^{*}}x_0, E_{j^{*}}\right)\right\|\right| > \frac{1}{2}.
\end{align}
This means $x_{0}\in S^{(n)}_{E_{j^{*}},k_{0}^{*}}$. Since $x_{0}$ is an arbitrary point in $B^{(n)}_{\text{cont}}$, we have  that
\begin{equation}\label{eq9}
B^{(n)}_{\text{cont}} \subseteq \bigcup_{j=1}^{J} \bigcup_{k_{0}=0}^{r^{2}} S^{(n)}_{E_{j}, k_{0}}.
\end{equation}

By the subadditivity of measure and the inclusion relation \eqref{eq9},
there exists a $\bar{n} _0=\bar{n}_0(\lambda)$ such that for
$r=n^{4}$ and $ n>\bar{n}_0$,
\begin{equation}\label{eq10}
\operatorname{mes}\left(B^{(n)}_{\text{cont}}\right) \leq \sum_{j=1}^{J} \sum_{k_{0}=0}^{r^{8}} \operatorname{mes}\left(S^{(n)}_{E_{j}, k_{0}}\right)
<Cr^{2}(4\lambda)^{n+1}e^{-c\lambda^{-4} n^{-2}r}<e^{-n}.
\end{equation}

Consider the sequence of bad sets $\{B^{(n)}_{\text{cont}}\}_{n > \bar{n}_0(\lambda)}$. From \eqref{eq10}, the sum of their measures converges:
\[
\sum_{n > \bar{n}_0(\lambda)}^{\infty} \operatorname{mes}(B^{(n)}_{\text{cont}}) < \sum_{n > \bar{n}_0(\lambda)}^{\infty} e^{-n} < \infty.
\]

According to the Borel-Cantelli lemma,  almost every phase $x$ belongs to only finitely many such bad sets $B^{(n)}_{\text{cont}}$. We define
\begin{align}
\mathcal{G} :&= \T \setminus \limsup_{n \rightarrow \infty} B^{(n)}_{\text{cont}}=\T \setminus\bigcap_{n_0>\bar{n}_0}^{\infty}\bigcup_{n=n_0}^{\infty}B^{(n)}_{\text{cont}}\notag\\
&= \{x\in \T: \exists \, n_0=n_0(x,\lambda)\in \mathbb{N} \text{ such that }   n_0>\bar{n}_0\text{ and } x\notin B^{(n)}_{\text{cont}} \text{ for all } n > n_0\}.\notag
\end{align}
By the Borel-Cantelli lemma, we have $\operatorname{mes}(\mathcal{G} ) = 1$. The set $\mathcal{G} $ is the full measure good set,
which is equivalent to the description in \eqref{p11}.
\hfill \qedbox
\medskip

Building on the settings in Proposition \ref{le62}, we now establish key estimates for the Green's function.

\begin{prop}\label{p2}
The Green's function  has the following properties:
\medskip

(a) For $x \in \mathcal{G}$ and $E\in[-2\lambda,2\lambda]$, we can find an integer $S\sim N^4$  with  $ N > 2n_{0}^{4}(x, \lambda)$ such that
\begin{equation}\label{g2}
|G_{\Lambda}(x,E)(n_{1},n_{2})|<e^{-|n_{1}-n_{2}|L(E)+C_aN}
\end{equation}
for all $n_{1}, n_{2} \in [S-\frac{N}{2}  , S+\frac{N}{2}]\doteq\Lambda$ and positive constant $C_a$.
\medskip

(b) Let $S$ and $N$ be integers satisfying  $S> N > 2n_{0}^{4}(x, \lambda)$. If for $x \in \mathcal{G}$ and $E\in[-2\lambda,2\lambda]$, there exists a  $C>0$ such that
\begin{equation}\label{p00}
\frac{1}{N} \log \|A_{N}(2^{k}x, E)\| \geq L^{A}_{N}(E) -CA
\end{equation}
holds for all $k\in [S-{N}  , S+{N}]$, then the following  holds:
\begin{align}
|G_{\Lambda}(x,E)(n_{1},n_{2})| <e^{-|n_{1}-n_{2}|L(E)+C_bN}
\end{align}
for all $n_{1}, n_{2} \in[S-\frac{N}{2}  , S+\frac{N}{2}]$ and positive constant $C_b$.
\end{prop}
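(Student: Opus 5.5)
The plan follows the standard Bourgain–Schlag route, with Cramer's rule \eqref{g} as the basic tool. For $n_1\le n_2$ in $\Lambda=[S-\frac N2,S+\frac N2]$ (of length $N$), we write
\[
|G_\Lambda(x,E)(n_1,n_2)|=\frac{|\det[H_{[S-N/2,\,n_1-1]}-E]|\,|\det[H_{[n_2+1,\,S+N/2]}-E]|}{|\det[H_\Lambda(x)-E]|}.
\]
By \eqref{an} each determinant is an entry of a transfer matrix over the corresponding subinterval. Hence the numerator is bounded by $\|A_{n_1-S+N/2}(2^{S-N/2}x,E)\|\,\|A_{S+N/2-n_2}(2^{n_2+1}x,E)\|$. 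The proof therefore reduces to two estimates: an upper bound on the norm of each short transfer matrix, and a lower bound on $|\det[H_\Lambda-E]|$.

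\textbf{Upper bound on the numerator.} For $x\in\mathcal G$ and lengths $m\ge n_0$, Proposition~\ref{le62} says that the averages $\frac1{m^4}\sum_k\frac1m\log\|A_m(2^{k+k_0}x,E)\|$ are within $1$ of $L^A_m(E)$, uniformly in $|E|\le2\lambda$ and $k_0\le m^8$. Because $\log\|A_m\|\ge0$, this controls individual terms only crudely. The better route is subadditivity: split a segment of length $\ell$ into pieces of length $m\sim n_0$ and use $L^A_m(E)\le L(E)+C A_f$ from Corollary~\ref{co01}(c). This gives
\[
\log\|A_\ell(2^jx,E)\|\le \ell L(E)+C\ell+O(N)
\]
for $j$ in a window of size $\lesssim N^{4}$. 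Segments shorter than $n_0$ are handled with the trivial bound of Corollary~\ref{co01}(b), costing at most $3n_0\log\lambda\ll N$ since $N>2n_0^4$. This requires $S\lesssim N^{4}\le n^8$ so that the shifts lie in the admissible range; that constraint is why part (a) fixes $S\sim N^4$. The net effect is numerator $\le e^{(N-|n_1-n_2|)L(E)+C'N}$.

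\textbf{Lower bound on the denominator.} For part (b) we use hypothesis \eqref{p00} at $k=S-N/2$, which gives $\|A_N(2^{S-N/2}x,E)\|\ge e^{N L^A_N-CAN}\ge e^{NL(E)-C''N}$, using $|L^A_N-L|\le C$ from Lemma~\ref{le52} and Corollary~\ref{co01}(c). The norm is comparable to the largest entry, and the entries of \eqref{an} are determinants on $\Lambda$ or on $\Lambda$ with one or two endpoints removed. The main difficulty is that the largest entry need not be $\det[H_\Lambda-E]$ itself. The standard fix is to allow the window to shift by one or two sites, i.e.\ to replace $\Lambda$ by a neighbouring interval of the same length on which the full determinant is large. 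Since \eqref{p00} holds for every $k\in[S-N,S+N]$, some such choice is always available, and the resulting bound differs only by constants absorbed into $C_bN$. Dividing the numerator bound by $e^{NL(E)-C''N}$ then gives $e^{-|n_1-n_2|L(E)+C_bN}$.

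\textbf{Part (a).} Here we must choose $S\sim N^4$ so that the lower bound holds without assuming it. Proposition~\ref{le62} with $n=N$ and $k_0=0$ gives
\[
\frac1{N^4}\sum_{k=1}^{N^4}\frac1N\log\|A_N(2^kx,E)\|\ge L^A_N(E)-1 .
\]
Hence some $k\le N^4$ satisfies $\frac1N\log\|A_N(2^kx,E)\|\ge L^A_N(E)-1$, and we take $S=k+N/2$, up to the shift trick above, so that $\Lambda$ starts at $k$. The numerator estimate is then exactly as before, and (a) follows with $C_a$ depending on $A_f$.

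The main obstacle is converting the averaged control of Proposition~\ref{le62} into pointwise upper bounds on the sub-block transfer matrices while keeping all errors $O(N)$ uniformly in $E$. I would handle this through subadditivity over blocks of length about $n_0$, combined with the crude bound $3\log\lambda$ per step on leftover pieces.
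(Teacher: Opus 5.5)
Your outline follows the paper's proof: Cramer's rule, an upper bound on the two numerator determinants, a lower bound on $|\det[H_\Lambda-E]|$ (from \eqref{p00} in (b), from a shift selected by averaging in (a)), and the neighbouring-window device. However, the numerator step fails as you set it up.

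At scale $m$, $\mathcal G$ controls averages of $\frac1m\log\|A_m(2^kx,E)\|$ over $m^4$ \emph{consecutive} shifts beginning at $k_0\le m^8$. Cutting a segment of length $\ell$ into pieces of length $m$ gives a sum over an arithmetic progression of step $m$. To bring $\mathcal G$ to bear you must average over the offset. That leaves a consecutive sum of length about $\ell\le N$ starting at $j\approx S-\frac N2$, and $\mathcal G$ bounds it only if $\ell\gtrsim m^4$ and $j\lesssim m^8$.

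For $m\sim n_0$ you have $m^8\sim n_0^8<N^2/4$, far below $j\sim N^4$. No other choice of $m$ helps, since $m^4\lesssim N$ forces $m^8\lesssim N^2$. So the claimed bound $\log\|A_\ell(2^jx,E)\|\le\ell L(E)+C\ell+O(N)$ for $j\lesssim N^4$ does not follow. In (b), where $S>N$ is unrestricted, no polynomial shift range can cover it at all. The paper's own version (blocks of length $n^{1/4}$, valid for $k_0\le n^2$) has the same mismatch when used at $S\sim N^4$.

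The repair bypasses $\mathcal G$ entirely, and this is exactly where large coupling helps. Since $f\in[0,1]$ and $|E|\le2\lambda$, $\|A(y,E)\|\le|E-\lambda f(y)|+1\le4\lambda$ for every $y$. Hence $\log\|A_\ell(y,E)\|\le\ell\log(4\lambda)\le\ell\,(L(E)+C_0+\log4)$ by Zhang's bound $L(E)>\log\lambda-C_0$ (Remark~\ref{re2}). This gives the numerator bound $e^{(N-|n_1-n_2|)L(E)+(C_0+\log4)N}$ for every $x$ and every $S$. The constant is independent of $\lambda$, which matters later when $L(E)\approx\log\lambda$ must dominate $C_aN$. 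The per-step bound $3\log\lambda$ of Corollary~\ref{co01}(b), which you use for leftovers, is too weak for this.

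With that fix, $\mathcal G$ is needed only in (a), to select the shift. There you should take $k_0=N^4$ rather than $k_0=0$, so that the selected $k$, and hence $S$, is actually of order $N^4$.

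Finally, the four entries of $A_N$ are determinants on intervals whose lengths differ by up to two. So the window device gives the bound on one of $[a,b],[a+1,b],[a,b-1],[a+1,b-1]$, not on a same-length neighbour, and in (b) not on the prescribed $\Lambda$. The paper is equally loose here, and the localization argument only needs this flexible version.
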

\Proof
(a) According to Proposition \ref{le62}, for $x \in \mathcal{G}$ and $n>n_{0}^{4}(x,\lambda)$, let $n'=[  n^{\frac{1}{4}}] $ and $r'= [n/n' ] +1$. Then we have
\begin{equation}
\left\|A_{n}(x, E)\right\|\leq\prod_{k'=0}^{r'}\left\|A_{n'}\left(2^{k' n'}x, E\right)\right\|,
\end{equation}
By the  definition of $\mathcal{G}$, it follows that
\begin{align}
\frac{1}{n}\log\left\|A_{n}\left(2^{k_{0}}x, E\right)\right\|&\leq \frac{1}{n} \sum_{k'=0}^{r'} \log \|A_{n'}(2^{k' n'+ k_0}x, E)\|\notag\\&=\frac{k'r'}{n}\cdot\frac{1}{k'r'} \sum_{k'=0}^{r'} \log \|A_{n'}(2^{k' n'+ k_0}x, E)\|\notag\\
&\leq L^{A}_{n'}(E)+1+O(n^{-\frac{3}{4} } )
\end{align}
for $k_{0}\leq n^{2}$. For sufficiently large $n'$, Corollary \ref{co01} (b) implies that $L^{A}_{n'}(E) < L(E) + c$. Hence we have

\begin{equation}\label{ar}
\frac{1}{n} \log \| A_{n}(2^{k_0} x, E) \| < L(E) + C
\end{equation}
for a suitable positive constant $C$.
From \eqref{g} and \eqref{ar}, we can obtain that
\begin{align}\label{gr}
|G_{\Lambda}(x,E)(n_{1},n_{2})|
&=|G_{N}(2^{S-\frac{N}{2}}x,E)(n_{1},n_{2})| \notag\\
&\leq \frac{ \| A_{n_{1}-1}(2^{S-\frac{N}{2}}x) \| \, \| A_{N-n_{2}-1}(2^{S-\frac{N}{2}+n_{2}+1} x) \| }{ \left| \det[H_{N}(2^{S-\frac{N}{2}}x) - E] \right| } \notag\\
& < \frac{ e^{(N - |n_{1} - n_{2}|) L(E) + CN} }{ \left| \det[H_{N}(2^{S-\frac{N}{2}}x) - E] \right| }.
\end{align}
for all $n_{1}, n_{2} \in [S-\frac{N}{2}  , S+\frac{N}{2}]$.
To control the denominator, we relate it to the norm of the transfer matrices using  \eqref{an}:
\begin{align}
\|A_N(x,E) \|\leq \sqrt{2} \max_{(*_1,*_2)}\big\{
|\det[H_{N+*_1}(2^{*_2}x) - E] |\big\},
\end{align}
for ${(*_1,*_2)\in\{(0,0),(-1,0),(-1,1),(-2,1)\}}$.
Hence, we can further estimate \eqref{gr} for specified $(*_1,*_2)$ that
\begin{align}
|G_{N+*_1}(2^{S-\frac{N}{2}+*_2}x,E)(n_{1},n_{2})|&\leq
e^{(N+*_1-|n_{1}-n_{2}|)L(E)-\log\|A_{N}(2^{S-\frac{N}{2}+*_2}x,E)\|+\log\sqrt{2}+CN}\notag\\
 &<
e^{-|n_{1}-n_{2}|L(E)+[NL^{A}_{N}(E)-\log\|A_{N}(2^{S-\frac{N}{2}+*_2}x,E)\|]+2CN}
\end{align}
by using convergence of $L^{A}_N(E)$ for large $N$, and  the term $N|L^{A}_N(E)-L(E)|$, $*_1L(E)$ and $\log\sqrt{2}$ is absorbed into the $CN$-term.
Since the factor $*_1$  does not affect the estimation,  we can obtain that
\begin{align}\label{g3}
|G_{N}(2^{S-\frac{N}{2}+*_2}x,E)(n_{1},n_{2})|<
e^{-|n_{1}-n_{2}|L(E)+[NL^{A}_{N}(E)-\log\|A_{N}(2^{S-\frac{N}{2}+*_2}x,E)\|]+2CN}.
\end{align}

According to \eqref{p11}, it follows that
\begin{equation}
\left|L^{A}_{N}(E) - N^{-4}\sum_{k=N^{4}}^{2N^{4}}\frac{1}{N}\log\|A_{N}(2^{k}x, E)\|\right| \leq 1.
\end{equation}
Therefore, there exist  an integer $S_0\sim N^4$ such that
\begin{equation}\label{S0}
L^{A}_{N}(E) -\frac{1}{N}\log\|A_{N}(2^{S_0}x, E)\| \leq 1.
\end{equation}
Let $S=S_0-\frac{N}{2}+*_2$.  By selecting a siutable constant  $C_a>0$, it follows from \eqref{g3} and \eqref{S0} that
\begin{equation}
|G_{[\Lambda]}(x,E)(n_{1},n_{2})|<e^{-|n_{1}-n_{2}|L(E)+CN}
\end{equation}
for all $n_{1}, n_{2} \in [S-\frac{N}{2}  , S+\frac{N}{2}]$.

(b)  We have
$S-\frac{N}{2}+*_2\in [S-{N}, S+{N}]$ for $*_2\in\{0,1\}$ and large $N$. Building upon  \eqref{g3} and condition \eqref{p00}, it follows that
\begin{align}
|G_{N}(2^{S-\frac{N}{2}+*_2}x,E)(n_{1},n_{2})|&<
e^{-|n_{1}-n_{2}|L(E)+[NL^{A}_{N}(E)-\log\|A_{N}(2^{S-\frac{N}{2}+*_2}x,E)\|]+2CN}\notag\\
&\leq
e^{-|n_{1}-n_{2}|L(E)+CA+2CN}<e^{-|n_{1}-n_{2}|L(E)+3CN}.
\end{align}
Since the factor $*_2$  does not affect the estimation,  we can obtain that
\begin{align}
|G_{\Lambda}(x,E)(n_{1},n_{2})|
&=|G_{N}(2^{S-\frac{N}{2}}x,E)(n_{1},n_{2})| <e^{-|n_{1}-n_{2}|L(E)+3CN}.
\end{align}
By selecting an appropriate constant  $C_b>0$, we complete the proof.
\hfill \qedbox
\medskip

\section{ Double Resonance Set}

The double resonance set is the  critical ``bad set'' in localization proofs, representing parameter values that may disrupt  localization. This section demonstrates the exponential decay of the measure for double resonance sets.

First, leveraging the strong mixing property  of the underlying dynamics,  we present a useful lemma.

\begin{lemma}\label{le71}
Let $k=k(\lambda)$ be a large integer. For measurable sets $ I, S\subseteq  \T $,  we have
\begin{equation}
{\rm mes}\left[x\in I: 2^{k}x\in S \right]\leq\left(2^{k}\left|I\right|+1\right) 2^{-k}{\rm mes}\left[S\right].
\end{equation}
\end{lemma}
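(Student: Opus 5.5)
The plan is to use the exact self-similarity of the preimage $T^{-k}S$. The function $h(x)=\chi_S(2^kx)$ is $2^{-k}$-periodic on $\mathbb{R}$, and over any interval of length exactly $2^{-k}$ its integral equals $2^{-k}\,{\rm mes}[S]$. We then count how many full periods fit into $I$ and bound the leftover piece by one extra full period. This gives the additive constant $1$, not $2$.

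\textbf{Convention for $|I|$.} We read $|I|$ as the length of $I$ when $I$ is an interval (an arc of $\mathbb{T}$). For a general measurable $I$ we read it as the length of the smallest arc $\hat I\supseteq I$. Some such convention is necessary. If $I=T^{-k}S$ and $|I|$ meant Lebesgue measure, the left side would equal ${\rm mes}[S]$, while the right side would equal ${\rm mes}[S]^2+2^{-k}{\rm mes}[S]$, which is smaller once ${\rm mes}[S]<1-2^{-k}$. With this convention, monotonicity gives
\[
{\rm mes}[x\in I:2^kx\in S]\le {\rm mes}[x\in \hat I:2^kx\in S],
\]
so it suffices to treat an arc $I=[\alpha,\alpha+|I|)$, lifted to $\mathbb{R}$. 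If $|I|\ge 1$ the claim is trivial: the left side is at most ${\rm mes}[S]$ by invariance of Lebesgue measure under $T^k$, while the right side is at least $(1+2^{-k}){\rm mes}[S]$.

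\textbf{Step 1 (periodicity).} For any $\beta\in\mathbb{R}$, the substitution $y=2^kx$ gives
\[
\int_\beta^{\beta+2^{-k}}\chi_S(2^kx)\,dx=2^{-k}\int_{2^k\beta}^{2^k\beta+1}\chi_S(y)\,dy=2^{-k}\,{\rm mes}[S],
\]
because $\chi_S$ is $1$-periodic and integrates to ${\rm mes}[S]$ over any unit interval.

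\textbf{Step 2 (decomposition).} Write $|I|=m2^{-k}+\rho$ with $m=\lfloor 2^k|I|\rfloor$ and $0\le\rho<2^{-k}$. Split $I$ into $m$ consecutive intervals of length $2^{-k}$ and one remainder interval $[\alpha+m2^{-k},\alpha+|I|)$ of length $\rho$.
\begin{itemize}
\item By Step 1, each of the $m$ full pieces contributes exactly $2^{-k}{\rm mes}[S]$.
\item The remainder is contained in $[\alpha+m2^{-k},\alpha+(m+1)2^{-k})$. Since the integrand is nonnegative, Step 1 bounds its contribution by $2^{-k}{\rm mes}[S]$.
\end{itemize}
Summing the two bullets gives
\[
{\rm mes}[x\in I:2^kx\in S]\le (m+1)2^{-k}{\rm mes}[S]\le (2^k|I|+1)2^{-k}{\rm mes}[S].
\]

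\textbf{Main obstacle.} The only delicate point is avoiding the crude count of dyadic intervals $[j2^{-k},(j+1)2^{-k})$ that meet $I$. That count can be $\lceil 2^k|I|\rceil+1$, which would produce the constant $2$. Using periods anchored at the left endpoint $\alpha$ of $I$, instead of the fixed dyadic grid, removes this loss, and Step 1 holds for any base point $\beta$. The hypothesis that $k$ is large is not needed; the estimate holds for every $k\ge0$.
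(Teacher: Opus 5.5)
Your proposal is correct and follows essentially the same route as the paper: cover $I$ by $\lceil 2^k|I|\rceil$ consecutive intervals of length $2^{-k}$, bound the contribution of each by $2^{-k}\,{\rm mes}[S]$, and sum. The differences are only in rigor. You derive the per-piece identity from the exact $2^{-k}$-periodicity of $\chi_S(2^k\cdot)$, where the paper loosely appeals to ``strong mixing.'' Your arc convention for $|I|$ makes explicit what the paper's covering count tacitly assumes, and your example $I=T^{-k}S$ correctly shows that the statement fails for general measurable $I$ without such a convention.
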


\Proof
 We cover  $I$ with subintervals of length $ 2^{-k} $. The minimum number of such subintervals needed to cover $I$ satisfies:

 \begin{equation}
l= \lceil 2^{k} |I| \rceil \leq 2^{k} |I| + 1.
\end{equation}

Let these  disjoint subintervals be $I^{(1)}, I^{(2)}, \dots, I^{(l)} $ with $I \subseteq \bigcup_{i=1}^{l} I^{(i)} $.
Since the strong mixing property of the dynamical system,   the independence of $x$ and $T^k x$ holds for large $k$.
Hence, we have
 \begin{equation}
{\rm mes}\left[x\in I^{(i)} : T^kx \in S\right] = {\rm mes}[I^{(i)}] \cdot {\rm mes}[S] = |I^{(i)}| \cdot {\rm mes}[S] \leq 2^{-k} {\rm mes}[S],
 \end{equation}
for $i=1,2,\cdots, l$.

Consider the event $P=\{x\in I : T^kx \in S\} $. By the additivity of probability, we obtain that
 \begin{equation}
{\rm mes}[P] \leq\sum_{i=1}^{l} {\rm mes}[P \cap I^{(i)}]  \leq \sum_{i=1}^{l} 2^{-k} {\rm mes}[S] \leq (2^{k} |I| + 1) 2^{-k} {\rm mes}[S].
\end{equation}
\hfill \qedbox
\medskip

Next, we estimate the measure of the double resonance set.

\begin{prop}\label{p3}
    For   sufficiently large $\lambda$,  let $N > N_0(\lambda)$ be a large integer. Define $\mathcal{D}^N $ as the set of  $x \in \mathbb{T}$ satisfying the following condition:  there exist  some choice of  $E$, $N_{1}$ and $k$, where  $E\in [-2\lambda,2\lambda]$,  $N_{1} \in [N^{12}, 2N^{12}] \cap \mathbb{Z}$  and $k \in [\bar{N}, 2\bar{N}] \cap \mathbb{Z}$   with $\bar{N} = [e^{(\log N)^{2}}]$ such that
\begin{equation}
\label{7.1}
\|G_{N_{1}}(x, E)\| > e^{N^{2}},
\end{equation}
\begin{equation}
\label{7.2}
\frac{1}{N} \log \|A_{N}(2^{k}x, E)\| < L^{A}_{N}(E) - CA,
\end{equation}
where $C>0$ is a suitable constant.
Then we have
\begin{equation}
\label{7.3}
{\rm mes}[\mathcal{D}^N] \leq e^{-\frac{cN}{\log\lambda}}
\end{equation}
for an appropriate constant $c>0$.
\end{prop}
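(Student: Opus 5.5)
The plan is to follow the Bourgain--Schlag double resonance argument: first replace the continuum of energies by a finite set, then decouple the two conditions \eqref{7.1} and \eqref{7.2} using the time separation $k\ge\bar N$ and Lemma \ref{le71}.

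\emph{Step 1: localize the energy.} Since $H_{N_1}(x)$ is self-adjoint, \eqref{7.1} means $\operatorname{dist}(E,\sigma(H_{N_1}(x)))<e^{-N^2}$. So $E$ lies within $e^{-N^2}$ of one of the at most $N_1+1\le 3N^{12}$ eigenvalues $E_j(x)$ of $H_{N_1}(x)$. On each dyadic interval $I_{N_1,j}$ of length $2^{-N_1}$, the potential $\lambda f(2^n x)$, $0\le n\le N_1$, is $C^1$. Hence the $E_j(x)$ are piecewise Lipschitz with constant at most $\lambda 2^{N_1}$.

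\emph{Step 2: stability of \eqref{7.2} in $E$.} By \eqref{p13} and \eqref{p131}, both $\frac1N\log\|A_N(2^kx,E)\|$ and $L^A_N(E)$ are $(4\lambda)^N$-Lipschitz in $E$. Since $(4\lambda)^N e^{-N^2}\ll 1$, we may replace $E$ by $E_j(x)$ at the cost of changing $CA$ to $CA-1$. Hence
\[
\mathcal D^N\subseteq\bigcup_{N_1}\bigcup_{k}\bigcup_{j}\Big\{x:\ \tfrac1N\log\|A_N(2^kx,E_j(x))\|<L^A_N(E_j(x))-CA+1\Big\}.
\]

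\emph{Step 3: decoupling.} Partition $\mathbb T$ into intervals $I$ of length $2^{-N_1}$, further refined to length $\delta$ with $\delta\,\lambda 2^{N_1}(4\lambda)^N\ll1$. On each such $I$ we can freeze $E_j(x)\equiv E_j(x_I)$ with $O(1)$ error. The condition then becomes $2^kx\in S_{E}:=\{y:\frac1N\log\|A_N(y,E)\|<L^A_N(E)-CA+2\}$ for a fixed energy $E$. By Corollary \ref{co01}(a) (with $CA\gg A_f+\log\lambda/N$ and $|L^A_N-\log\lambda|\le A_f+o(1)$), we have $\operatorname{mes}S_E<e^{-cN/\log\lambda}$. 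Here it is essential that $k\ge\bar N=e^{(\log N)^2}\gg N_1+\log(1/\delta)$, which keeps the required smallness of $\delta$ compatible. Lemma \ref{le71} then gives
\[
\operatorname{mes}\{x\in I: 2^kx\in S_E\}\le (2^k|I|+1)2^{-k}e^{-cN/\log\lambda}\le 2|I|\,e^{-cN/\log\lambda}.
\]
Summing over $I$ yields $2e^{-cN/\log\lambda}$ for each triple $(N_1,k,j)$. Summing over the at most $N^{12}\cdot\bar N\cdot 3N^{12}$ triples gives $e^{(\log N)^2+C\log N-cN/\log\lambda}\le e^{-c'N/\log\lambda}$ for $N>N_0(\lambda)$, which is \eqref{7.3}.

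\emph{Main obstacle.} The hard step is Step 3, making the freezing rigorous. The map $x\mapsto E_j(x)$ is only piecewise smooth, because of the jump of $f$ at $0$, and it must be fixed on intervals short enough for Lipschitz control yet long compared to $2^{-k}$. I would handle this as in Lemma \ref{le61}: work on the dyadic cells where all the $f(2^nx)$ are $C^1$, and use that the number of cells, roughly $2^{N_1}\delta^{-1}$, enters only through $|I|$. The bound from Lemma \ref{le71} is uniform in $I$ once $2^{-k}\le|I|$. If the Lipschitz bound on $E_j$ proves awkward, the fallback is to discretize $E$ on a $(4\lambda)^{-N}e^{-1}$-grid instead, using the semialgebraic-type counting of \cite{BoS}.
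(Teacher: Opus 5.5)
Your proposal is correct and follows the paper's route:
\begin{itemize}
\item reduce \eqref{7.1} to $\operatorname{dist}(E,\sigma(H_{N_1}(x)))<e^{-N^2}$;
\item absorb the resulting energy error using \eqref{p13};
\item freeze $x$ on short intervals so that the eigenvalues of $H_{N_1}$ are fixed;
\item combine Lemma \ref{le71} (with $2^k|I|\ge 1$) with the large deviation bound of Corollary \ref{co01}(a);
\item sum over the $\lesssim N^{24}\bar N$ choices of $(N_1,k,j)$.
\end{itemize}
Your freezing scale is also more careful than the paper's: you use subintervals of the level-$N_1$ dyadic cells, of length $\delta$ with $\delta\lambda 2^{N_1}(4\lambda)^N\ll 1$, which is admissible because $k\ge\bar N\gg N_1+\log(1/\delta)$. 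The paper's choice $|I_j|\sim e^{-N^2}$ does not control the motion of $\sigma(H_{N_1}(x))$, because those eigenvalues have $x$-Lipschitz constant of order $\lambda 2^{N_1}$ with $N_1\sim N^{12}$, and such intervals also contain discontinuities of $f(2^n\cdot)$.
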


\Proof To estimate the measure of set $\mathcal{D}^N $, we define

\begin{equation}\label{7.4}
\widetilde{\mathcal{D}}^N (x)= \bigcup_{N_{1} \sim N^{12}} \bigcup_{E \in \sigma(H_{N_{1}}(x)) } \left\{ y \in \mathbb{T} \mid \frac{1}{N} \log \|A_{N}(y, E)\| < L^{A}_{N}(E) -  CA\right\}.
\end{equation}
for a suitable constant $C$. Assume that $x \in {\mathcal{D}}^N$ satisfies \eqref{7.1} and \eqref{7.2}. From  $\|G_{N_{1}}(x, E)\|^{-1} = \operatorname{dist}(E, \sigma(H_{N_{1}}(x)))$, it follows that  the $e^{-N^{2}}$-errors of \eqref{7.1}  can be absorbed into the $CA$-term in \eqref {7.4} by the regularity \eqref{p13}, such that  $2^{k}x\in \widetilde{\mathcal{D}}^N(x)$ for  $k \sim \bar{N}$.

We denote the set on the right-hand side of \eqref{7.4} by $\mathcal{C}_{N}(E)$ and let
 \begin{align}\widetilde{\mathcal{C}}_{N}(E)=\left\{ x \in \mathbb{T} \mid \frac{1}{N} \log \|A_{N}(x, E)\| < L^{A}_{N}(E) -\frac{C}{2}A-\frac{\log\lambda}{N} \right\}.
  \end{align}
 It follows from  Corollary \ref{co01} (a) that
\begin{equation}
{\rm mes}\left[\mathcal{C}_{N}(E)\right]\leq {\rm mes}[\widetilde{\mathcal{C}}_{N}(E)]<e^{-\frac{cN}{\log\lambda}},
\end{equation}
for the suitable constant $C$ and large $N $.

Let $\mathbb{T}=\bigcup_{j=1}^{J} I_{j}$, with $|I_{j}|\sim e^{-N^{2}}$ and fix some $x_{j}\in I_{j}$.  The regularity  \eqref{p14}  ensures that $ \mathcal{C}_{N}(E) $ is a regular measurable set, whose boundary  has zero measure. Hence, according to Lemma \ref{le71}, we have
\begin{equation}
{\rm mes}\left[x\in I_{j}: 2^{k}x\in \mathcal{C}_{N}(E)\right]\leq\left(2^{k}\left|I_{j}\right|+1\right) 2^{-k}{\rm mes}\left[\mathcal{C}_{N}(E)\right].
\end{equation}
For large $N$, we can obtain that
\begin{align}\label{rd}
&{\rm mes}\left[x\in\mathbb{T}: 2^{k}x\in\mathcal{D}^N(x)\text{ for some }k\sim\bar{N}\right]\notag\\
&\leq\sum_{k\sim\bar{N}}\sum_{j=1}^{J}{\rm mes}\left[x\in I_{j}: 2^{k}x\in\mathcal{D}^N\left(x_{j}\right)\right]
\notag\\
&\leq\sum_{k\sim\bar{N}}\sum_{j=1}^{J}\sum_{N_{1}\sim N^{12}}\sum_{E\in\sigma(H_{N_{1}}(x_{j}))}{\rm mes}\left[x\in I_{j}: 2^{k}x\in \mathcal{C}_{N}(E)\right]
\notag\\
&\leq\sum_{k\sim\bar{N}}\sum_{j=1}^{J}\sum_{N_{1}\sim N^{12}}\sum_{E\in\sigma(H_{N_{1}}(x_{j}))}\left(2^{k}\left|I_{j}\right|+1\right) 2^{-k}{\rm mes}\left[\mathcal{C}_{N}(E)\right]
\notag\\
&\leq 2\sum_{k\sim\bar{N}}\sum_{j=1}^{J} N^{100}|I_{j}| e^{-\frac{cN}{\log\lambda}}\leq 2\bar{N} N^{100}e^{-\frac{cN}{\log\lambda}}<e^{-\frac{cN}{2\log\lambda}}.
\end{align}
In the first step,  the $e^{-N^{2}}$-errors arising by passing ${\mathcal{D}}^N$ to $\widetilde{\mathcal{D}}^N(x)$ could slightly alter the constant  $C$ of $CA$-term in \eqref {7.4}. In the fourth step, we utilize the fact that $H_{N_{1}}(x_{j})$ is an $(N_1+1)\times (N_1+1)$ matrix, which implies it has exactly $N_1+1$ eigenvalues (counting multiplicities).
By selecting appropriate positive constants $c$ and $C$, we complete the proof of the proposition.
\hfill \qedbox
\medskip

\section{Localization for the Doubling Map  }

In this section, we show that   there exists a set $\Omega=\Omega(\lambda)$ with ${\rm mes}[\T\backslash \Omega]=0$ such that for every $x\in\Omega$, the operator $H(x)$ given by \eqref{2.1} displays Anderson localization. Hence, Theorem \ref{th1} is a direct consequence of the following theorem.

\begin{theorem}\label{th3}
For sufficiently large $\lambda > 0$, let $\Omega = \Omega( \lambda) = \mathcal{G} \setminus \lim \sup  \mathcal{D}^N$ with ${\rm mes}(\mathbb{T} \setminus \Omega) = 0$, where $\mathcal{G}$ was defined in Proposition \ref{le62} and $\mathcal{D}^N$ in Proposition \ref{p3}.
For every $x \in \Omega$ the operator $H_\lambda(x)$ defined in \eqref{2.1} on $\ell^2(\N)$ with a Dirichlet boundary condition $u_{-1} = 0$ has pure point spectrum  and  all eigenfunctions decay exponentially at infinity.
\end{theorem}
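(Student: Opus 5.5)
The plan follows the Bourgain--Schlag elimination scheme. First, ${\rm mes}(\mathbb{T}\setminus\Omega)=0$ follows at once. By Proposition \ref{le62}, ${\rm mes}(\mathbb{T}\setminus\mathcal{G})=0$. By Proposition \ref{p3}, $\sum_N {\rm mes}[\mathcal{D}^N]\le\sum_N e^{-cN/\log\lambda}<\infty$ for fixed large $\lambda$, so Borel--Cantelli gives ${\rm mes}[\limsup\mathcal{D}^N]=0$. Now fix $x\in\Omega$. There is $N_*(x)$ with $x\notin\mathcal{D}^N$ for all $N\ge N_*$, and $n_0(x,\lambda)$ is defined. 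By Lemma \ref{r} and Lemma \ref{th0}, it suffices to show that every generalized eigenfunction $u$ of $H(x)$, with energy $E\in\sigma(H(x))\subseteq[-2\lambda,2\lambda]$ and $|u_n|\le C(1+n)^{\delta}$, decays exponentially.

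Step one is to locate a good window near the origin that captures $u$. Apply Proposition \ref{p2}(a) at scale $N$ to get $S\sim N^4$ with off-diagonal decay of $G_\Lambda$ on $\Lambda=[S-N/2,S+N/2]$. Expressing $u$ on that window through the Poisson formula $u_m=-G_\Lambda(m,a)u_{a-1}-G_\Lambda(m,b)u_{b+1}$ alone does not force anything. I will instead use the standard argument that, since $u\not\equiv0$ and is polynomially bounded, the initial interval $[0,N_1]$ with $N_1\sim N^{12}$ cannot have all Green's functions good. Otherwise the Poisson formula, iterated from the Dirichlet boundary outward, would force $u\equiv0$ near $0$. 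Hence $\operatorname{dist}(E,\sigma(H_{N_1}(x)))<e^{-N^2}$, that is, \eqref{7.1} holds for some $N_1\in[N^{12},2N^{12}]$. This is the resonance at the origin.

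Step two eliminates the double resonance. Since $x\notin\mathcal{D}^N$ and \eqref{7.1} holds, \eqref{7.2} must fail for every $k\in[\bar N,2\bar N]$. Thus $\frac1N\log\|A_N(2^kx,E)\|\ge L^A_N(E)-CA$ on the whole range $[\bar N,2\bar N]$, which is exactly hypothesis \eqref{p00} of Proposition \ref{p2}(b) for every $S$ with $[S-N,S+N]\subset[\bar N,2\bar N]$. Proposition \ref{p2}(b) then gives $|G_{[S-N/2,S+N/2]}(n_1,n_2)|<e^{-|n_1-n_2|L(E)+C_bN}$. For $n\in[\bar N+N,2\bar N-N]$, centre a window at $n$ and apply the Poisson formula at the two endpoints, at distance $N/2$. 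Using $|u|\le C\bar N^{\delta}$ gives $|u_n|\le 2C\bar N^{\delta}e^{-\frac N2L(E)+C_bN}$. Since $L(E)>\log\lambda-C_0$ by Remark \ref{re2}, while $C_b$ does not grow with $\lambda$ (this must be tracked from the proofs of Proposition \ref{p2}, where constants come from $A_f$ and the $\mathcal{G}$ bound $1$), for $\lambda>\lambda_0$ we get $\frac N2L(E)-C_bN\ge cN\log\lambda$. Iterating the Poisson formula over a chain of overlapping windows (a paving argument, each step gaining a factor $e^{-cN\log\lambda}$ and losing at most $e^{C_bN}\cdot 2$) upgrades this to $|u_n|\le e^{-c' n\log\lambda}$ for $n\in[\frac54\bar N,\frac74\bar N]$, because $|n_1-n_2|$ can reach order $\bar N$.

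Step three covers all large $n$. As $N$ ranges over $N\ge N_*$, the intervals $[\frac54\bar N,\frac74\bar N]$ with $\bar N=[e^{(\log N)^2}]$ overlap consecutively, since $\bar N_{N+1}/\bar N_N\to1$. They therefore cover $[n_1,\infty)$, giving $|u_n|\le e^{-cn}$ for all large $n$, and Lemma \ref{r} concludes. I expect the main obstacle to be step one: producing the resonance \eqref{7.1} at the origin for the specific $E$, with the $\lambda$-dependence of $C_a$ and $n_0$ compatible with $N\ge N_*$. I would first try the contrapositive, assuming $\|G_{N_1}(x,E)\|\le e^{N^2}$ for all $N_1\sim N^{12}$ and all large $N$. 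Combining this with Cramer's rule \eqref{g} and the uniform upper bound \eqref{ar} should give exponential decay of $G_{[0,N_1]}(x,E)(0,N_1)$ once $N^2\ll N^{12}L(E)$, and hence $u_0=u_1=0$, a contradiction.
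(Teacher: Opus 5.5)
\textbf{Gap in Step 1.} Your Steps 2 and 3 are fine, but Step 1 has a genuine gap: neither of your mechanisms produces the resonance \eqref{7.1} at the origin.

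Iterating the Poisson formula outward from the Dirichlet end needs good windows near $0$, and nothing supplies them. The set $\mathcal{G}$ and Proposition \ref{p2}(a) only locate a good window of length $N$ somewhere at distance $\sim N^4$.

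The Cramer's-rule fallback fails for a structural reason. For the corner entry, both numerator determinants in \eqref{g} are empty, so $|G_{N_1}(x,E)(0,N_1)|=|\det[H_{N_1}(x)-E]|^{-1}$, and the upper bound \eqref{ar} has nothing to act on. You would need an exponential lower bound on $|\det[H_{N_1}(x)-E]|$. The assumption $\|G_{N_1}(x,E)\|\le e^{N^2}$ by itself only gives $|\det[H_{N_1}(x)-E]|\ge e^{-N^2}$. Any exponential lower bound must come from a lower bound on some $\|A_m(2^kx,E)\|$ at a known position, which is exactly the input from $\mathcal{G}$ you set aside.

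There is also a quantifier problem. Your contrapositive assumes non-resonance for all $N_1\sim N^{12}$ and all large $N$, so it would yield resonance only for infinitely many $N$. Step 3 needs resonance for every $N\ge N_*$, so that the intervals $[\frac54\bar N,\frac74\bar N]$ overlap.

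\textbf{The missing idea.} It lies in the sentence you dismissed: the Poisson formula on a good window does force something once it is combined with the normalization at the Dirichlet end.

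Apply Proposition \ref{p2}(a) at scale $N^3$ rather than $N$. This gives $n\sim N^{12}$ and $\Lambda_1=[n-\frac{N^3}{2},n+\frac{N^3}{2}]$ with $|G_{\Lambda_1}(x,E)(m,m')|<e^{-|m-m'|L(E)+C_aN^3}$. The polynomial bound on $u$ at the two edges then yields $|u_n|\le C'N^{12\delta}e^{-N^3(L(E)/2-C_a)}<e^{-N^2}$ for $\lambda$ large.

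Since $u_{-1}=0$ you may normalize $u_0=1$. The identity $(H_{[0,n-1]}(x)-E)R_{[0,n-1]}u=-u_n\delta_{n-1}$ gives $1=|u_0|\le |u_n|\,\|G_{n-1}(x,E)\|$. This is \eqref{7.1} with $N_1=n-1\sim N^{12}$, obtained for each large $N$ separately, and it is the paper's Part 1.

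The scale matters. At scale $N$ the window sits at distance $\sim N^4$, outside $[N^{12},2N^{12}]$, and the Poisson bound is only $e^{-cN\log\lambda}$, not $e^{-N^2}$.

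\textbf{The rest of your argument.} With this Step 1 in place, the remainder goes through. Your paving at scale $N$ in Step 2 is the careful way to exploit the failure of \eqref{7.2}, which is only a scale-$N$ condition; the paper instead applies Proposition \ref{p2}(b) directly to a window of length $\bar N/2$. Your overlap argument via $\bar N_{N+1}/\bar N_N\to 1$ makes explicit the covering of all large $n$ that the paper leaves implicit.
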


\Proof
 Fix a sufficiently large $\lambda$ and an arbitrary $x \in \Omega$.
 By Theorem \ref{th0}, for any $E \in \mathcal{E}$, the corresponding generalized eigenfunction $u(x) = \{u_n(x)\}_{n\in\N}$ with $u_0=1$ is polynomially bounded:
 \begin{equation}
|u_n(x)| \leq C(1 + |n|)^{\delta}<2C|n|^{\delta}.
\end{equation}
Therefore, it suffices to prove that for any such $E$, the corresponding generalized eigenfunction $u(x)$ decays exponentially at infinity.

To this end, fix some such $E$ and $u(x)$, and choose $N=N(\lambda)$ so large  that $x\in\Omega^N\doteq \mathcal{G} \setminus \mathcal{D}^N$. Then, we prove that $u(x)$ decays exponentially.

\subsection*{Part 1: There exists an integer $N_1\sim N^{12}$ such that $\| G_{N_{1}} (x, E) \| > e^{N^2}$.}

On the one hand, since the generalized eigenfunction  $u(x)$ satisfies $H(x)u(x) = Eu(x)$, we have
\begin{align}
(H_{N_{1}} - E) R_{N_{1}} u(x)&= - R_{N_{1}} H R_{\mathbb{Z}_{+} \setminus [0,N_{1}]} u(x),\label{g1} \\
| (H_{N_{1}} - E) R_{N_{1}} u(x) | &= | u_{N_{1}+1}(x) |,
\end{align}
where $R_{N_{1}}=R_{[0,N_{1}]} $ is the restriction.  Since $u_0=1$, we obtain
\begin{equation}\label{gn}
\| G_{N_{1}} (x, E) \| > | u_{N_{1}+1}(x) |^{-1} .
\end{equation}

On the other hand,  define an interval
$\Lambda_1 = \left[n- \frac{N^{3}}{2}, n + \frac{N^{3}}{2}\right] = [a_1, b_1]$ for $n\sim N^{12}$.

Based on Proposition \ref{p2} (a) and Corollary \ref{co01} (b), we can find an integer $n\sim N^{12}$  such that
\begin{align}
 |G_{\Lambda_1}(x, E+\epsilon)(n,m)|<e^{-|n'-m'|L(E)+C_aN^3}<e^{C_aN^3}
 \end{align}
 for all $n,m \in \Lambda_1$ and all $\epsilon>0$ (to avoid the fixed $E$ as the  eigenvalue of $H_{\Lambda_1}(x)$).
 Therefore,  the Green's function is uniformly bounded:
  \begin{align}\label{gl1}
\| G_{\Lambda_1}(x, E+\epsilon)\|=&\sup_{\|\nu \|=1}\| G_{\Lambda_1}(x, E+\epsilon)\nu  \| \notag\\\leq& \sup_{m}\sum_{n}\left| G_{\Lambda_1}(x, E+\epsilon)(n, m)\right|\notag\\<&{N^3}e^{C_aN^3}< e^{2C_aN^3}.
\end{align}
Hence, the fixed $E$  is not an eigenvalue of $H_{\Lambda_1}(x)$. Since the Green’s function is continuous at $E$, taking the limit  $\epsilon\rightarrow 0$, the following estimate holds:
\begin{align}
 |G_{\Lambda_1}(x, E)(n,m)|<e^{-|n-m|L(E)+C_aN^3}
 \end{align}
 for all $n,m \in \Lambda_1$.

Combining with \eqref{g1} and the polynomial bound on $u(x)$, we can estimate the component of solution:
\begin{align}\label{uk}
\left|u_{n}(x) \right| &= \left|\left(H_{\Lambda_1 }(x)-E\right)^{-1}\left(u_{a_1-1}(x) \delta_{a_1} + u_{b_1+1}(x) \delta_{b_1}\right)(n)\right|\notag\\
&\leq \left|G_{\Lambda_1}(x, E)(n,a_1)\right|\left|u_{a_1-1}(x) \right|+ \left|G_{\Lambda_1 }(x, E)(n, b_1)\right|\left|u_{b_1+1}(x) \right|\notag\\
& \leq 4C N^{12{\delta}}e^{- \frac{1}{2} N^{3}\log\lambda}  < e^{-N^{2}}.
\end{align}

Taking $ N_1 =n -1\sim N^{12}$,  we can  obtain  $\| G_{N_{1}} (x, E) \| > e^{N^2}$ from \eqref{gn} and \eqref{uk}.

\subsection*{Part 2: The generalized eigenfunction $u(x)$  decays exponentially.}

In view of the definition of $\Omega^{N}$, we conclude that the converse of \eqref{7.2} holds for every $k \in \left[\frac{3\bar{N}}{2}- \frac{\bar{N}}{2}, \frac{3\bar{N}}{2}+ \frac{\bar{N}}{2}\right]=[\bar{N}, 2\bar{N}]$.

Define an interval
$\Lambda_2 = \left[\frac{3\bar{N}}{2}- \frac{\bar{N}}{4}, \frac{3\bar{N}}{2}+ \frac{\bar{N}}{4}\right]=\left[\frac{5\bar{N}}{4}, \frac{7\bar{N}}{4}\right]=[a_2, b_2]$. According to
Propsition \ref{p2} (b), we have
\begin{align}\label{gl}
 |G_{\Lambda_2}(x, E+\epsilon)(n,m)|<e^{-|n-m|L(E)+C_b\bar{N}}
 \end{align}
 for all $n,m \in \Lambda_2$ and all  $\epsilon>0$, and  we can obtain the following uniform bound:
 \begin{align}\label{9g1}
\| G_{\Lambda_2}(x, E+\epsilon)\|=&\sup_{\|\nu \|=1}\| G_{\Lambda_2}(x, E+\epsilon)\nu  \| \notag\\\leq& \sup_{m}\sum_{n}\left| G_{\Lambda_2}(x, E+\epsilon)(n, m)\right|\notag\\<&\bar{N}e^{C_b\bar{N}}< e^{2C_b\bar{N}}.
\end{align}
This indicates that  the fixed $E$ is not an eigenvalue of $H_{[\bar{N}, 2\bar{N}]}(x)$. Letting  $\epsilon\rightarrow 0$, it follows that
\begin{equation}\label{9.25}
\left|G_{\Lambda_2}(x, E)(n, m)\right|  <e^{-|n-m|L(E)+C_b\bar{N}}.
\end{equation}

Combining with \eqref{9.25} and the polynomial bound on the solution $u(x)$, we obtain
\begin{align}
\left|u_{n}(x) \right| &\leq \left|\left(H_{\Lambda_2 }(x)-E\right)^{-1}\left(u_{a_2-1}(x) \delta_{a_2} + u_{b_2+1}(x) \delta_{b_2}\right)(n)\right|\notag\\
&\leq \left|G_{\Lambda_2}(x, E)(n,a_2)\right|\left|u_{a_2-1}(x) \right|+ \left|G_{\Lambda_2 }(x, E)(n, b_2)\right|\left|u_{b_2+1}(x) \right|\notag\\
& \leq 4C \bar{N}^{{\delta}}e^{- \frac{1}{16} \bar{N}\log\lambda}  < e^{-cn\log\lambda}
\end{align}
for any $n \in [\frac{11\bar{N}}{8}, \frac{13\bar{N}}{8}]$ and a suitable constant $c>0$.

Having shown that the key condition is satisfied for all sufficiently large $N$ in Proposition \ref{p2} and Proposition \ref{p3}, we obtain that the generalized eigenfunction $u(x)$  decays exponentially at
infinity.

Since the above results hold for arbitrarily large $\lambda$ and for arbitrary $E\in\mathcal{E}$ and $x\in\Omega $,  the proof of Theorem \ref{th3} is complete.
 \hfill \qedbox

\medskip
\vskip1cm
\noindent{$\mathbf{Acknowledgments}$}

This work was supported by the NSFC (grant no. 11571327 and 11971059).


\vskip1cm
\begin{thebibliography}{[aa]}

\bibitem{A} Avila, A.: \textit{Global theory of one-frequency Schr\"{o}dinger operators.} Acta Math. 21(1), 1-54 (2015)

\bibitem{AJ} Avila, A., Jitomirskaya, S.: \textit{The Ten Martini problem.} Ann. of Math. 170, 303-342 (2009)

\bibitem{ADZ} Avila, A., Damanik, D., Zhang, Z.: \textit{Schr\"{o}dinger operators with potentials generated by hyperbolic transformations: I-positivity of the Lyapunov exponent.} Invent. Math. 231, 851-927 (2023)


\bibitem{Ba}  Baladi, V.: \textit{Positive Transfer Operators and Decay of Correlations.} World Scientific (2000)

\bibitem{B}  Bjerkl\"{o}v, K.: \textit{Positive Lyapunov exponent for some Schr\"{o}dinger cocycles over strongly expanding  circle endomorphisms.} Comm. Math. Phys. 379, 353-360 (2020)

\bibitem{Bou}  Bourgain,  J.: \textit{Green's function estimates for lattice Schr\"{o}dinger operators and applications.} Princeton, NJ: Princeton University Press, (2005)

\bibitem{BB} Bourgain, J., Bourgain-Chang, E.: \textit{A note on Lyapunov exponents of deterministic strongly mixing  potentials.} J. Spectr. Theory 5, 1-15 (2015)

\bibitem{BG} Bourgain, J., Goldstein, M.: \textit{On nonperturbative localization with quasi-periodic potential.} Ann. of Math. 152, 835-879 (2000)

\bibitem{BG2}   Bourgain, J.,  Goldstein, M.,  Schlag, W.: \textit{Anderson localization for Schr\"{o}dinger operators on $\Z$ with potentials given by the skew-shift.} Comm. Math. Phys. 220.3 583621, (2001).

\bibitem{BJ}  Bourgain, J., Jitomirskaya S.: \textit{Continuity of the Lyapunov exponent for quasiperiodic operators with analytic potential.} J. Stat. Phys. 108 (2002), 1203-1218.

\bibitem{BoS} Bourgain, J., Schlag, W.: \textit{Anderson localization for Schr\"{o}dinger operators on $\mathbb{Z}$ with strongly mixing  potentials.} Commun. Math. Phys. 215(1), 143-175 (2000)

\bibitem{BrS} Brin, M., Stuck, G.: \textit{Introduction to dynamical systems.} Cambridge University Press, Cambridge (2015)

\bibitem{CL} Carmona, R., Lacroix, J.: \textit{Spectral theory of random Schr\"{o}dinger operators.} Boston: Birkh\"{a}user (1990)

\bibitem{CS} Chulaevsky, V., Spencer, T.: \textit{Positive Lyapunov exponents for a class of deterministic potentials.} Commun. Math. Phys. 168, 455-466 (1995)

\bibitem{D} Damanik, D., Fillman, J.: \textit{One-dimensional ergodic Schr\"{o}dinger operators I. General theory}.  Graduate Studies in Mathematics Vol. 221, American Mathematical Society, Providence (2022)

\bibitem{D05}  Damanik, D., Killip, R.: \textit{Almost everywhere positivity of the Lyapunov exponent for the doubling  map}. Commun. Math. Phys. 257, 287-290 (2005)

\bibitem{D07}  Damanik, D.: \textit{Lyapunov exponents and spectral analysis of ergodic Schr\"{o}dinger operators: a survey  of Kotani theory and its applications. In: Spectral Theory and Mathematical Physics: a festschrift  in honor of Barry Simon's 60th birthday}, Proc. Sympos. Pure Math. 76, Part 2, Amer.  Math. Soc., Providence, RI, 539-563, (2007)

\bibitem{D16} Damanik, D., Fillman, J., Lukic, M., Yessen, W.: \textit{Characterizations of uniform hyperbol
icity and spectra of CMV matrices.} Discrete Contin. Dyn. Syst. Ser. S., 9, 1009-1023,  (2016)

\bibitem{D23}Damanik, D., Fillman, J.: \textit{The almost sure essential spectrum of the doubling map model is connected.} Comm. Math. Phys. 400, 793-804 (2023)

\bibitem{FP} Figotin, A., Pastur, L.: \textit{Spectra of random and almost-periodic operators.} Grundlehren der Math ematischen Wissenschaften 297, Berlin: Springer-Verlag (1992)

\bibitem{F}  Forman, Y. M.: \textit{Localization and cantor spectrum for quasiperiodic discrete Schr\"{o}dinger operators with asymmetric, smooth, cosine-like sampling functions.} Dissertation,  Yale University (2022).

\bibitem{GS}  Goldstein M.,  Schlag, W.: \textit{H\"{o}lder continuity of the integrated density of states for quasi-periodic Schr\"{o}dinger equations and averages of shifts of subharmonic functions.} Ann. of Math. 154, 155-203 (2001).


\bibitem{H} Herman, M.: \textit{Une m\'{e}thode pour minorer les exposants de Lyapounov et quelques exemples montrant  le caract\`{e}re local d'un th\'{e}or\`{e}me d'Arnold et de Moser sur le tore de dimension 2.} Comment. Math.  Helv. 58, 453-502 (1983)

\bibitem{JK} Jitomirskaya, S., Kachkovskiy, I.: \textit{All couplings localization for  quasiperiodic operators with monotone potentials.} J. Eur. Math. Soc. 21, 777-795 (2018)

\bibitem{LT} Lagendijk, A., Tiggelen, B., Wiersma, D.: \textit{Fifty years of Anderson localization.} Phys. Today 62(8), 24-29 (2009)

\bibitem{LP} Lin, Y., Piao, D., Guo, S.: \textit{ Anderson localization for the quasi-periodic CMV matrices with Verblunsky coefficients defined by the skew-shift}. J. Funct. Anal. 284(1), 1-25 (2023)

\bibitem{Sh}Shnol, I.E.: \textit{On the behavior of eigenfunctions.} (Russian), Doklady Akad. Nauk SSSR (N.S.) 94, 389-392 (1954)

\bibitem{Si}Simon, B.: \textit{Spectrum and continuum eigenfunctions of Schr\"{o}dinger Operators.} J. Funct. Anal. 42, 66-83 (1981)

\bibitem{SS} Sorets, E., Spencer, T.:  \textit{Positive Lyapunov exponents for Schr\"{o}dinger operators with quasi-periodic potentials.} Comm. Math. Phys. 142, 543-566 (1991)

\bibitem{WZ}  Wang, Y., Zhang, Z.: \textit{Uniform positivity and continuity of Lyapunov exponents for a class of $C^2$ quasiperiodic Schr\"{o}dinger cocycles.} J. Funct. Anal. 268, 2525-2585 (2015)

\bibitem{Y}  Young, L.: \textit{Some open sets of nonuniformly hyperbolic cocycls.} Ergodic Theory Dynam. Sys. 13, 409-415 (1993)

\bibitem{ZL} Zhang, G., Li, X.: \textit{Positive Lyapunov exponent for some Schr\"{o}dinger cocycles over multidimensional strongly expanding torus endomorphisms.} Nonlinearity 36, 401-425 (2023)

\bibitem{Z12} Zhang, Z.: \textit{Positive Lyapunov exponents for quasiperiodic Szeg\H{o} cocycles.} Nonlinearity 25, 1771-1797 (2012)

\bibitem{Z24} Zhang, Z.: \textit{Uniform Positivity of the Lyapunov Exponent for  Monotone Potentials Generated by the Doubling Map.} Comm. Math. Phys. 405: 231 (2024)



\end{thebibliography}
\end{document}